\newtheorem{lemma}{Lemma}[section]
\newtheorem{proposition}{Proposition}[section]
\newtheorem{theorem}{Theorem}[section]
\newtheorem{corollary}{Corollary}[section]
\theoremstyle{definition}
\newtheorem{definition}{Definition}[section]
\theoremstyle{remark}
\newtheorem{remark}{Remark}[section]
\newcommand{\dist}{\operatorname{dist}}
\newcommand{\SL}{\operatorname{PSL}}
\newcommand{\arccot}{\operatorname{arccot}}
\begin{document}
\title{Sparse Equidistribution of Unipotent Orbits in Finite-Volume Quotients of $\SL(2,\mathbb R)$\\\vspace{0.3in} \large With Appendices of Hausdorff Dimensions of Subsets of Non-Diophantine\\ Points and the Effective Equidistribution of Horospherical Orbits}
\author{Cheng Zheng}
\date{}
\maketitle

\begin{abstract}
In this note, we consider the orbits $\{pu(n^{1+\gamma})|n\in\mathbb N\}$ in $\Gamma\backslash\SL(2,\mathbb R)$, where $\Gamma$ is a non-uniform lattice in $\SL(2,\mathbb R)$ and $\{u(t)\}$ is the standard unipotent one-parameter subgroup in $\SL(2,\mathbb R)$. Under a Diophantine condition on the intial point $p$, we can prove that the trajectory $\{pu(n^{1+\gamma})|n\in\mathbb N\}$ is equidistributed in $\Gamma\backslash\SL(2,\mathbb R)$ for small $\gamma>0$, which generalizes a result of Venkatesh \cite{V}. We will compute Hausdorff dimensions of subsets of non-Diophantine points in Appendix A, using results of lattice counting problem. In Appendix B we will use a technique of \cite{V} and an exponential mixing property to prove a weak version of a result of Str\"ombergsson \cite{S}, which is about the effective equidistribution of horospherical orbits.
\end{abstract}

\section{Introduction}
The theory of equidistribution of unipotent flows on homogeneous spaces has been studied extensively over the past few decades. Furstenberg \cite{F} first proved that the unipotent flow on $\Gamma\backslash\SL(2,\mathbb R)$, where $\Gamma$ is a uniform lattice, is uniquely ergodic. In \cite{Dani-Invent78} Dani classified ergodic invariant measures for unipotent flows on finite volume homogeneous spaces of $\SL(2,\mathbb R)$, and using this result Dani and Smillie \cite{DS} proved that any non-periodic unipotent orbit is equidistributed on $\Gamma\backslash\SL(2,\mathbb R)$ for any lattice $\Gamma$. The proof of the Oppenheim Conjecture due to Margulis~\cite{Margulis-Oppenheim1} by proving a special case of Raghunathan's conjecture drew a lot of attention to this subject. Soon afterwords, Ratner published her seminal work \cite{R1,R2,R3} proving measure classification theorem for unipotent actions on homogeneous spaces as conjectured by Raghunathan and Dani~\cite{Dani-Invent81}. Using these results, Ratner~\cite{
R4} proved  that any unipotent orbit in a finite volume homogeneous space is equidistributed in its orbit closure; see also Shah~\cite{Shah-MathAnn91} for the case of Rank-$1$ semisimple groups. 

Ratner's work has led to many new extensions and number theoretic applications of ergodic theory of unipotent flows. One of these results, which is related to this paper, was the work by Shah \cite{Sh}. In that paper, Shah asked whether $\{pu(n^2)|n\in\mathbb N\}$ is equidistributed in a sub-homogeneous space of $\SL(2,\mathbb Z)\backslash\SL(2,\mathbb R)$, where $u:\mathbb R\to\SL(2,\mathbb R)$ is the standard unipotent $1$-parameter subgroup $$u(t)=\left(\begin{array}{cc}1 & t \\0 & 1\end{array}\right).$$

In this direction, Venkatesh published a result about sparse equidistribution (\cite{V}, Theorem 3.1). There he introduced a soft technique of calculations by using a discrepancy trick, and proved that if $\Gamma$ is a cocompact lattice in $\SL(2,\mathbb R)$ and $\gamma>0$ is a small number depending on the spectral gap of the Laplacian on $\Gamma\backslash\SL(2,\mathbb R)$, then for any point $p\in\Gamma\backslash\SL(2,\mathbb R)$ we have $$\frac1N\sum\limits_{n=0}^{N-1}f(pu(n^{1+\gamma}))\to\int_{\Gamma\backslash\SL(2,\mathbb R)} fd\mu.$$ In other words, in the case of $\Gamma\backslash\SL(2,\mathbb R)$ being compact, the equidistribtion holds for the sparse subset $\{n^{1+\gamma}|n\in\mathbb N\}$. It is worth noting that recently Tanis and Vishe \cite{TV} improve some results of Venkatesh \cite{V} and they obtain an absolute constant $\gamma>0$ which does not depend on the spectral gap.

In this paper, we will consider the sparse subset $\{n^{1+\gamma}|n\in\mathbb N\}$ and orbits of $\{u(n^{1+\gamma})|n\in\mathbb N\}$ in $\Gamma\backslash\SL(2,\mathbb R)$, where $\Gamma$ is a non-uniform lattice. We want to prove a sparse equidistribution theorem similar to Shah's conjecture \cite{Sh} and the work of Venkatesh \cite{V} and that of Tanis and Vishe \cite{TV}. To deal with the complexity caused by initial points of unipotent orbits, we introduce a Diophantine condition for points in $\Gamma\backslash\SL(2,\mathbb R)$ as follows.

Let $G=\SL(2,\mathbb R)$ and we consider the Siegel sets $N_\Omega A_\alpha K$ where $$N_{\Omega}=\left\{\left(\begin{array}{cc}1 & s \\0 & 1\end{array}\right)\Big|s\text{ is in a bounded subset }\Omega\subset\mathbb R\right\},\quad A_\alpha=\left\{\left(\begin{array}{cc}s & 0 \\0 & s^{-1}\end{array}\right)\Big|s\geq\alpha\right\}$$ and $K=SO(2).$  For the non-uniform lattice $\Gamma$, there exist $\sigma_j\in G$ and bounded intervals $\Omega_j\subset\mathbb R$  $(1\leq j\leq k)$ with the following property (\cite{GR}, \cite{DS})

\begin{enumerate}
\item For some $\alpha>0$, $G=\bigcup\limits_{j=1}^k\Gamma\sigma_jN_{\Omega_j} A_\alpha K$.
\item $\sigma_j^{-1}\Gamma\sigma_j\cap N$ is a cocompact lattice in $N$.
\item $N_{\Omega_j}$ is a fundamental domain of $\sigma_j^{-1}\Gamma\sigma_j\cap N\backslash N$.
\end{enumerate}

We will fix $\sigma_j$ $(1\leq j\leq k)$ in such a way that in the upper half plane $\mathfrak H$, each $\sigma_j$ corresponds to a cusp $\eta_j$, i.e. $\lim_{t\to\infty}\sigma_j\cdot it=\eta_j$, and $\eta_1,\eta_2,\dots,\eta_k$ are the inequivalent cusps of $\Gamma\backslash\mathfrak H$. Let $\Gamma_j=\Gamma\cap\sigma_j N\sigma_j^{-1}.$ Let $\pi_j$ be the covering map $$\pi_j:\Gamma_j\backslash G\to\Gamma\backslash G.$$ Now consider the usual action of $G$ on $\mathbb R^2$ and let $e_1=\left(\begin{array}{c}1 \\0\end{array}\right)$. For each $j$, we can define a map $$m_j:\Gamma_j\backslash G\to\mathbb R^2/\pm$$ by $$ m_j(q)=g^{-1}\sigma_j e_1$$ for $q=\Gamma_j g\in\Gamma_j\backslash G$, where $\mathbb R^2/\pm$ means that we identify every $v\in\mathbb R^2$ with its opposite $-v$. In this way, we obtain $k$ maps $m_j$ $(j=1,2,\dots,k)$ whose images are all in $\mathbb R^2/\pm$. Using these notations, we can give the following definition of Diophantine condition of a point $p\in\Gamma\backslash G$. 

\begin{definition}
Let $p\in\Gamma\backslash G$. We say that $p$ is Diophantine of type $(\kappa_1,\kappa_2,\dots,\kappa_k)$ for some $\kappa_j>0$ $(j=1,2,\dots,k)$ if for each $j$, there exist $\mu_j,\nu_j>0$ such that for every point $\left(\begin{array}{c}a \\b\end{array}\right)\in m_j(\pi_j^{-1}(p))$, we have either $|b|\geq\mu_j$ or $|a|^{\kappa_j}|b|\geq\nu_j$.
\end{definition}
\begin{remark}
This notion of Diophantine type on $p\in\Gamma\backslash G$ has been studied well in an equivalent form; it can be connected to the excursion rate of the geodesic orbit $\{g_t(p)\}_{t>0}$. We will prove this in section 3.
\end{remark}

It is straightforward to verify that if $g\in AN$ then the Diophantine types of $p$ and $pg$ are the same; although the choices of $\mu_j,\nu_j>0$ in the above definition may differ. The hausdorff dimension of the complement of the set of points of the Diophantine type $(\kappa_1,\kappa_2,\dots,\kappa_k)$ will be discussed in section 7. We will see that almost every point satisfies the Diophantine condition of type $(\kappa_1,\kappa_2,\dots,\kappa_k)$ when $\kappa_1,\kappa_2,\dots,\kappa_k>1$. When $\min\{\kappa_1,\kappa_2,\dots,\kappa_k\}=1$, the set of points of the Diophantine type $(\kappa_1,\kappa_2,\dots,\kappa_k)$ has zero Haar measure but has full Hausdorff dimension.

Now we state the main theorem in this paper.

\begin{theorem}[Main theorem]\label{main theorem}
Let $\Gamma$ be a non-uniform lattice in $\SL(2,\mathbb R)$ and $k$ the number of inequivalent cusps of $\Gamma\backslash\SL(2,\mathbb R)$. Suppose that $p\in\Gamma\backslash\SL(2,\mathbb R)$ is Diophantine of type $(\kappa_1,\kappa_2,\dots,\kappa_k).$ Then there exists a constant $\gamma_0>0$ such that for any $0<\gamma<\gamma_0$, we have $$\frac1N\sum\limits_{n=0}^{N-1}f(pu(n^{1+\gamma}))\to\int_{\Gamma\backslash\SL(2,\mathbb R)}fd\mu.$$ Here the constant $\gamma_0$ depends on $\kappa_1,\kappa_2,\dots,\kappa_k$ and $\Gamma$, and $f$ is any bounded continuous function on $\Gamma\backslash\SL(2,\mathbb R)$.
\end{theorem}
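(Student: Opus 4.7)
The plan is to adapt Venkatesh's discrepancy argument to the non-uniform setting, substituting the effective equidistribution of horocycle orbits (as in Appendix~B, or in Str\"ombergsson's work) for the spectral input used in the cocompact case, and then using the Diophantine hypothesis on $p$ to control the cusp excursions of the renormalised basepoints produced by the argument. By a standard approximation argument it should suffice to fix $f\in C_c^\infty(\Gamma\backslash G)$ with $\int f\,d\mu=0$ and exhibit a power saving $\frac1N\sum_{n<N}f(pu(n^{1+\gamma}))=O_f(N^{-\delta})$ for some $\delta=\delta(\gamma)>0$; the reduction from bounded continuous $f$ to this case should follow once we know that, under the Diophantine hypothesis, the orbit spends a controllable fraction of its time outside any Siegel tail.

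First I would partition $\{0,1,\dots,N-1\}$ into consecutive blocks $B_k$ of length $M=N^{\beta}$ with midpoints $n_k$, where $\beta\in(0,(1-\gamma)/2)$ is to be optimised at the end. On $B_k$, Taylor expansion gives
\[
n^{1+\gamma}=n_k^{1+\gamma}+(1+\gamma)\,n_k^{\gamma}(n-n_k)+O\!\left(n_k^{\gamma-1}M^{2}\right),
\]
with error $o(1)$ uniformly on $B_k$, so uniform continuity of $f$ replaces $f(pu(n^{1+\gamma}))$ by $f\!\left(pu(n_k^{1+\gamma})\,u((1+\gamma)n_k^{\gamma}(n-n_k))\right)$. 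Using the commutation relation $a_s u(t)a_{-s}=u(e^{2s}t)$ with $s_k:=\tfrac12\log\!\left((1+\gamma)n_k^{\gamma}\right)$, each block sum becomes
\[
\frac1M\sum_{j=-M/2}^{M/2-1}\widetilde f_k\!\left(q_k\,u(j)\right),\qquad q_k=p\,u(n_k^{1+\gamma})\,a_{s_k},\qquad \widetilde f_k(x)=f(x\,a_{-s_k}),
\]
i.e.\ a unit-speed horocyclic average of length $M$ from the renormalised basepoint $q_k$. This is the heart of Venkatesh's trick: a sparse orbit at fast horocyclic speed is traded for many short orbits at unit speed viewed from basepoints that have been pushed by the geodesic flow.

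Next, I would apply an effective equidistribution bound of the form
\[
\Bigl|\tfrac1M\sum_j \widetilde f_k(q_k u(j))\Bigr|\;\ll_f\;\mathcal H(q_k)^{A}\,e^{A' s_k}\,M^{-\delta_0},
\]
where $\mathcal H(q_k)$ is the maximal Siegel height of $q_k$ over all cusps, the Sobolev-norm dependence of $\widetilde f_k$ on $s_k$ has been absorbed into $e^{A' s_k}$, and $\delta_0,A,A'>0$ depend only on $\Gamma$; this is essentially the content of Appendix~B. The remaining task is to control the block-average of $\mathcal H(q_k)^{A}$. From the formula $m_j(\pi_j^{-1}(q_k))=a_{-s_k}u(-n_k^{1+\gamma})\,m_j(\pi_j^{-1}(p))$ one sees that $\mathcal H(q_k)\geq Y$ in cusp $\eta_j$ forces some $\binom{a}{b}\in m_j(\pi_j^{-1}(p))$ to satisfy both $e^{s_k}|b|\ll Y^{-1/2}$ and $e^{-s_k}|a-bn_k^{1+\gamma}|\ll Y^{-1/2}$. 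The Diophantine hypothesis ($|b|\geq\mu_j$ or $|a|^{\kappa_j}|b|\geq\nu_j$) rules out arbitrarily deep excursions and, combined with a dyadic decomposition in $Y$ and an elementary lattice-point count in $m_j(\pi_j^{-1}(p))$ over $k\leq N/M$, yields a polynomial-in-$N$ bound on the number of blocks whose basepoint has height above $Y$ in cusp $\eta_j$, with exponent controlled by $\kappa_j$.

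Summing the block estimates, handling the dyadic sum in $Y$, and optimising $\beta$ against $\delta_0$, $A$, $A'$, and $\min_j\kappa_j$ delivers the claimed power saving in an interval $\gamma\in(0,\gamma_0)$; this defines $\gamma_0$. The hard part will be the previous step: the renormalisation $a_{s_k}$ systematically pushes the basepoints towards the cusps (since $s_k\asymp\tfrac{\gamma}{2}\log n_k$), so the loss $\mathcal H(q_k)^{A}$ in the effective horocycle bound is in direct competition with the gain $M^{-\delta_0}$, and a naive estimate fails. The Diophantine hypothesis is exactly what is needed to make the block-average of $\mathcal H(q_k)^{A}$ grow slowly enough in $N$ for the total to remain $o(1)$; matching these exponents is both the crux of the argument and the source of the constraint defining $\gamma_0$.
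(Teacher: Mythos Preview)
Your outline has the right architecture---linearise into arithmetic progressions, feed them into an effective horocycle bound, and use the Diophantine hypothesis to control cusp excursions---but two of the steps diverge from the paper in ways that are genuine gaps rather than cosmetic differences.

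First, the effective input is misidentified. Appendix~B (and Str\"ombergsson's theorem) bounds a \emph{continuous} average $\tfrac1T\int_0^T f(qu(t))\,dt$, with error governed by the height of $g_{\log T}(q)$, not of $q$. To pass from the integral to your discrete sum $\tfrac1M\sum_j\tilde f_k(q_ku(j))$ you would compare via a Riemann sum, but after your renormalisation the $u$-Lipschitz constant of $\tilde f_k$ is $\asymp e^{2s_k}\asymp n_k^{\gamma}$, so that comparison carries an error of order $n_k^{\gamma}$, which does not decay. The paper does \emph{not} renormalise to gap $1$: it keeps the arithmetic progression with gap $K=(1+\gamma)M_i^{\gamma}$ and proves Theorem~\ref{thm51} directly, via the character-twisted estimate Lemma~\ref{l51} (this is where Venkatesh's discrepancy trick actually enters) combined with Str\"ombergsson. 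The cost is a factor $K^{1/2}$, and the height that appears is that of
\[
g_{\log T}(q_i)=p\begin{pmatrix}M_i^{1/4}&M_i^{3/4+\gamma}\\0&M_i^{-1/4}\end{pmatrix},
\]
which is neither your $q_k$ nor its image under $g_{\log M}$.

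Second, the height control in the paper is not a lattice-point count, and the conclusion is not a power saving. The paper uses blocks of \emph{variable} length $\asymp M_i^{1/2-\gamma}$, and starts each block at the first integer $M_i$ for which the point above lies \emph{outside} a shrinking cusp neighbourhood $S_{\theta(M_i,\epsilon)}$. That such $M_i$ exist with density $\geq 1-C_0\epsilon$ is Proposition~\ref{p41}, whose proof (Lemma~\ref{l41}) is a mean-value argument showing the function $x\mapsto(bx^{3/4+\gamma}-ax^{-1/4})^2x^{1/2-2/(\kappa_j+4)}+b^2x^{1-2/(\kappa_j+4)}$ is $(C,\tfrac12;\rho,\epsilon_0)$-good under the Diophantine inequality $|a|^{\kappa_j}|b|\geq\nu_j$; this is where the condition is actually used, and it is more delicate than a dyadic count. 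For the good blocks one then has $r_i\gg\epsilon^2 M_i^{2/(\kappa_j+4)}\to\infty$, so Theorem~\ref{thm51} gives $o(1)$ provided $\gamma<2\beta/(\kappa_j+4)$; the uncovered set contributes $\leq C_0\epsilon\|f\|_\infty$. Letting $N\to\infty$ and then $\epsilon\to 0$ yields qualitative equidistribution---no $N^{-\delta}$ is claimed or obtained.
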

\begin{remark}\label{remark}
From the proof of the main theorem, we will see that the constant $$\gamma_0=\min\left\{\frac{s^2}{(s+4)(\kappa_j+4)}\Big|j=1,2,\dots,k\right\}.$$ Here $s$ is defined as follows: if we let $\lambda>0$ denote the smallest eigenvalue in the discrete spectrum of the Laplacian $\Delta$ on $\Gamma\backslash\mathfrak H$ then $$s=\begin{cases}\frac{1-\sqrt{1-4\lambda}}2, & \text{ if }0<\lambda<\frac14;\\\frac12, &\text{otherwise}.\end{cases}$$
\end{remark}

Now let $\Gamma$ be a subgroup of finite index of $\SL(2,\mathbb Z)$. Then we have the following corollary of the main theorem, which will be explained in section 3.

\begin{corollary}\label{cor}
Let $\Gamma$ be a subgroup of finite index of $\SL(2,\mathbb Z)$. Let $p=\Gamma g\in\Gamma\backslash\SL(2,\mathbb R)$ with $$g=\left(\begin{array}{cc}a & b \\c & d\end{array}\right).$$ If $a/c\in\mathbb R$ is a Diophantine number of type $\zeta$; that is, there exists $C>0$ such that for all $m/n\in\mathbb Q$, we have $$|n|^\zeta\left|n\cdot\left(\frac ac\right)-m\right|\geq C,$$ then the orbit $\{pu(n^{1+\gamma})|n\in\mathbb N\}$ is equidistributed in $\Gamma\backslash\SL(2,\mathbb R)$ for $0<\gamma<\gamma_0:=\frac{s^2}{(4+s)(\zeta+4)}$.
\end{corollary}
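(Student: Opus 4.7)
The plan is to deduce the corollary from Theorem 1.1 by showing that every $p=\Gamma g$ satisfying the hypothesis is Diophantine of type $(\zeta,\zeta,\ldots,\zeta)$ in the sense of Definition 1.1; Remark 1.1 then gives exactly $\gamma_0 = s^2/((s+4)(\zeta+4))$.

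First I would pin down the Siegel data. Each cusp $\eta_j$ of $\Gamma\backslash\mathfrak{H}$ is represented by some $p_j/q_j \in \mathbb{Q}\cup\{\infty\}$ with $\gcd(p_j,q_j)=1$. Using B\'ezout I can choose $\sigma_j \in \SL(2,\mathbb{Z})$ with $\sigma_j e_1 = (p_j,q_j)^T$ and take $\Omega_j$ to be a fundamental domain for the cusp-width lattice $\sigma_j^{-1}\Gamma\sigma_j \cap N$; this is a valid Siegel family satisfying (1)--(3) since $\Gamma\subset\SL(2,\mathbb{Z})$. In particular $\sigma_j e_1 \in \mathbb{Z}^2_{\mathrm{prim}}$, and as $\gamma$ ranges over $\Gamma_j\backslash\Gamma$ the vectors $\gamma^{-1}\sigma_j e_1$ traverse the $\Gamma$-orbit of $\sigma_j e_1$, which lies in $\mathbb{Z}^2_{\mathrm{prim}}/\pm$ by unimodularity. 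Consequently every $(A,B)^T \in m_j(\pi_j^{-1}(p))$ has the form $g^{-1}(m,n)^T$ with $(m,n)\in\mathbb{Z}^2$ primitive; expanding with $g^{-1}=\begin{pmatrix}d & -b \\ -c & a\end{pmatrix}$ gives $A = dm-bn$, $B = an-cm = c(n\cdot(a/c) - m)$, and $n = cA+dB$.

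The verification of Definition 1.1 then reduces to elementary cases. If $n=0$ then $(m,n)=(\pm 1,0)$ and $|B|=|c|>0$ contributes to $\mu_j$. If $n\neq 0$, the Diophantine hypothesis on $a/c$ gives $|n(a/c)-m| \geq C/|n|^\zeta$, whence $|B| \geq |c|C/|n|^\zeta$. Assuming $|B|<1$, the identity $n=cA+dB$ yields $|n| \leq |c||A|+|d|$, and splitting according to whether $|A|\geq |d|/|c|$ or $|A|<|d|/|c|$ produces constants $\nu_j,\mu_j>0$ (depending only on $g$ and the cusp width) with $|A|^\zeta|B|\geq \nu_j$ in the former regime and $|B|\geq\mu_j$ in the latter. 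This establishes the Diophantine type $(\zeta,\ldots,\zeta)$, and the corollary follows from Theorem 1.1. The main obstacle is really just the bookkeeping in the cusp normalizations; once $\sigma_j e_1$ is arranged to be a primitive integer vector, everything reduces to the scalar Diophantine inequality for $a/c$ combined with a linear estimate of $|n|$ in terms of $|A|$ and $|B|$.
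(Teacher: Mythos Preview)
Your proposal is correct and follows essentially the same route as the paper: choose $\sigma_j\in\SL(2,\mathbb Z)$, observe that $m_j(\pi_j^{-1}(p))\subset g^{-1}\mathbb Z^2\setminus\{0\}$, and then transfer the scalar Diophantine hypothesis on $a/c$ to the pair $(A,B)=(dm-bn,\,an-cm)$ via the linear relation between $(A,B)$ and $(m,n)$. Your identity $n=cA+dB$ is just a rearrangement of the paper's computation $|dm-bn|=|d(cm-an)+n|/|c|$, and your explicit case split (on $n=0$ and on the size of $|A|$) makes precise what the paper abbreviates as ``when $|an-cm|$ is sufficiently small, $|dm-bn|\sim|n|$''. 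The primitivity observation is true but unnecessary---the argument goes through verbatim for all $(m,n)\in\mathbb Z^2\setminus\{0\}$---and the constants $\mu_j,\nu_j$ you produce depend on $g$, $C$, and $\zeta$ rather than on any cusp-specific data, but these are cosmetic points.
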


To prove the main theorem, we shall use the technique of Venkatesh in \cite{V} and Str\"ombergsson's result in \cite{S} about effective version of Dani and Smillie's result \cite{DS} on $\Gamma\backslash\SL(2,\mathbb R)$. In fact, an immediate consequence of the technique of \cite{V} and result of \cite{S} is obtained in the following theorem. Before stating the theorem, we need some notations. For $f\in C^k(\Gamma\backslash G)$ we let $\|f\|_{p,k}$ be the Sobolev $L^p$-norm involving all the Lie derivatives of order $\leq k$ of $f$. Note that $\|f\|_{\infty,0}$ is the supremum norm of $f$. We know that $G$ acts on the upper half plane $\mathfrak H$ by the action $$\left(\begin{array}{cc}a & b \\c & d\end{array}\right)\cdot z=\frac{az+b}{cz+d}$$ and we have the standard projection of $\Gamma\backslash G$ to the fundamental domain of $\Gamma$ in $\mathfrak H$ $$\pi:\Gamma\backslash G\to\Gamma\backslash\mathfrak H$$  by sending $\Gamma g$ to $\Gamma g(i)$. We define the geodesic flow on $\Gamma\backslash G$ by $$g_t(\Gamma g)=\Gamma g\left(\begin{array}{cc}e^{t/2} & 0 \\0 & e^{-t/2}\end{array}\right).$$
Fix, once for all, a point $p_0\in\Gamma\backslash\mathfrak H$. For $p\in\Gamma\backslash G$ let $$\dist(p)=d_{\mathfrak H}(p_0,\pi(p))$$ where $d_{\mathfrak H}(\cdot,\cdot)$ is the hyperbolic distance on $\Gamma\backslash\mathfrak H$.

\begin{theorem}[Cf.\cite{V} Theorem 3.1]\label{thm51}
Let $T>K>2$ and $f\in C^\infty(\Gamma\backslash G)$ satisfying $\int_{\Gamma\backslash G}fd\mu=0$ and $\|f\|_{\infty,4}<\infty$. Suppose that $q\in\Gamma\backslash G$ satisfies $r=r(q,T)=T\cdot e^{-dist(g_{\log T}(q))}\geq1$. Then we have $$\left|\frac1{T/K}\sum\limits_{\substack{j\in\mathbb Z\\0\leq Kj<T}}f(qu(Kj))\right|\ll\frac{K^\frac12\ln^\frac32(r+2)}{r^\frac\beta2} \|f\|_{\infty,4}$$ for $\beta=\frac{s\kappa}{2(8+\kappa)}$. Here $\kappa$ is the constant in the mixing property of the unipotent flow (see Theorem \ref{thm:KM} and Remark \ref{remark1}) and $s$ is defined as in Remark \ref{remark}.
\end{theorem}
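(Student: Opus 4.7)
The plan is to adapt the discrepancy argument of Venkatesh \cite{V} to the non-cocompact setting, using Str\"ombergsson's effective horocycle equidistribution \cite{S} as the input and the effective mixing of Theorem \ref{thm:KM} to control the matrix coefficients that arise. The $K^{1/2}$ factor in the stated bound will come from a Cauchy--Schwarz step, and the exponent $\beta/2$ from an interpolation between the two effective rates.

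First I would reduce the sparse sum to the continuous horocyclic average. Writing $N=T/K$, $\bar f_T=\frac1T\int_0^T f(qu(t))\,dt$, and $S(s)=\sum_{0\le Kj<T}f(qu(Kj+s))$, the identity $\frac1K\int_0^K S(s)\,ds = N\bar f_T$ together with the fundamental theorem of calculus gives
$$S(0)-N\bar f_T \;=\; -\frac{1}{K}\int_0^K (K-r)\,S_{Uf}(r)\,dr,\qquad S_{Uf}(r)=\sum_j (Uf)(qu(Kj+r)),$$
where $U$ is the horocyclic vector field. Cauchy--Schwarz in $r\in[0,K]$ then produces
$$|S(0)-N\bar f_T| \;\ll\; K^{1/2}\left(\int_0^K |S_{Uf}(r)|^2\,dr\right)^{1/2},$$
which is the source of the $K^{1/2}$.

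Next I would expand the square and substitute $t=Kj'+r$ to write $\int_0^K|S_{Uf}(r)|^2\,dr$ as a sum over shifts $h=j-j'$ of horocycle integrals of the functions $F_h(x)=(Uf)(xu(Kh))\overline{(Uf)(x)}$ on $[0,T-K|h|]$. Applying Str\"ombergsson's effective equidistribution to each $F_h$ replaces the integral by $(T-K|h|)\int F_h\,d\mu$ plus an error controlled by $\ln^{3/2}(r+2)/r^s$ and $\|F_h\|_{\infty,4}$, while the mean $\int F_h\,d\mu=\langle u(Kh)^*Uf,\,Uf\rangle_{L^2}$ is a matrix coefficient that decays like $(K|h|)^{-\kappa}$ by Theorem \ref{thm:KM}. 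The $h=0$ diagonal contributes $T\|Uf\|_{L^2}^2$, which is itself controlled via Str\"ombergsson applied to $|Uf|^2$. Finally, $\bar f_T$ is bounded directly by Str\"ombergsson to give $|\bar f_T|\ll \ln^{3/2}(r+2)/r^s\,\|f\|_{\infty,4}$, and adding these two pieces to the discrepancy $S(0)-N\bar f_T$ yields the theorem after taking the square root. The final exponent $\beta=\frac{s\kappa}{2(8+\kappa)}$ appears from an optimization: the $2$ in the denominator is the Cauchy--Schwarz root, and the $8=2\cdot4$ records the Sobolev order used in the mixing error.

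The hardest part will be the Sobolev bookkeeping and range optimization inside Step 2. The issue is that transverse derivatives of $F_h$ inflate like $(K|h|)^4\|f\|^2_{\infty,5}$, because $\mathrm{Ad}_{u(Kh)}$ acts polynomially on the complement of $U$ in $\mathfrak{sl}_2$; this inflation has to be compensated by the mixing decay $(K|h|)^{-\kappa}$. To avoid losing an extra derivative (so that only $\|f\|_{\infty,4}$ appears in the final bound), one must first smooth $f$ at an auxiliary scale $\eta$ in the transverse direction, losing $\eta^{-8}$ in the Sobolev norm but gaining $\eta^\kappa$ through the improved decay, and then split the sum over $h$ into a short-range portion (handled by Str\"ombergsson) and a long-range tail (handled directly by mixing), with the cut-off and $\eta$ chosen to match. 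Carrying out this balance carefully is what produces the explicit value of $\beta$ in the statement, and keeping every constant compatible with the eventual use of the theorem in the proof of Theorem \ref{main theorem} is the main work of this argument.
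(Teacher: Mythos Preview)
Your approach diverges from the paper's in a way that creates a real obstacle. The paper does not compare the arithmetic-progression sum to the continuous average directly; instead it proceeds in two stages. First (Lemma~\ref{l51}) one proves that the \emph{twisted} average $\mu_{T,\psi}(f)=\frac1T\int_0^T\psi(t)f(qu(t))\,dt$ satisfies $|\mu_{T,\psi}(f)|\ll r^{-\beta}\ln^3(r+2)\|f\|_{\infty,4}$ \emph{uniformly over all characters $\psi$}. The point of Venkatesh's trick is that after applying Cauchy--Schwarz to the smoothed quantity $\mu_{T,\psi}(\sigma_H f)$ with $\sigma_H f(x)=\frac1H\int_0^H\psi(s)f(xu(s))\,ds$, the character drops out and one is left with $\frac1{H^2}\iint_{[0,H]^2}\bigl|\frac1T\int_0^T\overline{f^y}f^z(qu(t))\,dt\bigr|\,dy\,dz$, where the translations $y,z$ live in $[0,H]$ with $H$ a \emph{free} parameter. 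Str\"ombergsson and mixing then give $H^8r^{-s}\ln^3(r+2)+H^{-\kappa}$, and optimizing $H=r^{s/(8+\kappa)}$ produces $\beta$. Second, the arithmetic-progression sum is expressed via the Fourier expansion of a periodic spike function of width $\delta$, whose Fourier coefficients have $\ell^1$-norm $1/\delta$; applying Lemma~\ref{l51} to each character and optimizing $\delta$ yields both the factor $K^{1/2}$ and the exponent $\beta/2$.

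In your route, after Cauchy--Schwarz you face horocycle integrals of $F_h(x)=(Uf)(xu(Kh))\overline{(Uf)(x)}$ with $h$ ranging over \emph{all} of $|h|\le T/K$, so that $K|h|$ runs up to $T$. The Str\"ombergsson error for each term carries $\|F_h\|_{\infty,4}\asymp (K|h|)^4\|Uf\|_{\infty,4}^2$, and summing over $h$ produces a contribution of order $T\sum_{|h|\le T/K}(Kh)^4\,r^{-s}\sim T^6K^{-1}r^{-s}$, which swamps the target $T^2K^{-2}r^{-\beta}$. Your proposed fix of smoothing $f$ transversally at scale $\eta$ does not cure this: the blow-up comes from the adjoint action $\mathrm{Ad}_{u(Kh)}$ on $\mathfrak{sl}_2$, which turns any transverse vector field into one of size $\asymp(Kh)^2$ regardless of how smooth $f$ is; the trade-off you describe (``lose $\eta^{-8}$, gain $\eta^\kappa$'') does not correspond to what actually happens. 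Nor does a short/long split in $h$ rescue the argument, since the long-range tail bounded trivially contributes $\sim K(N-H_0)^2$, which is only small when $H_0$ is essentially $N$, bringing back the full Sobolev explosion. The idea you are missing is precisely the decoupling in the paper's argument: the parameter $H$ governing the size of the translations inside Cauchy--Schwarz is introduced \emph{before} any reference to $K$, and the connection to the arithmetic progression is made only afterwards on the Fourier side, where the cost is merely $1/\delta$ rather than a power of $T$.
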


This theorem gives an estimate for the average of the unipotent action along an arithmetic progression with gap $K$, which is crucial in our proof of the main theorem. This was proved first in \cite{V} and later in \cite{TV}, both in the case of $\Gamma\backslash G$ being compact.\\

The strategy of the paper is the following: note that the bound in Theorem \ref{thm51} depends on the initial point, and hence when we combine the results with different arithmetic progressions and different initial points, the outcomes would get out of control. To overcome this difficulty, we need the Diophantine condition. With the help of this Diophantine condition along with the notion of $(C,\alpha;\rho,\epsilon_0)$-good functions, we will be able to control the rates of these effective results. In section 2, we list the concepts and theorems that we need in this paper. In section 3, we study the Diophantine condition and deduce Corollary \ref{cor} from the main theorem. In section 4, we will study dynamics of a special class of orbits in $\Gamma\backslash G$. The dynamical properties of these orbits will help us control the rates of the 
effective results in this paper. Since we are dealing with the noncompact case of $\Gamma\backslash G$, and also for the sake of completeness, we include the technique of \cite{V} and prove Theorem \ref{thm51} in section 5. We will finish the proof of the main theorem in section 6. Further discussions will be included in section 7.\\

It may be interesting to explore the relation between the techniques used in this work and those developed in the work of Sarnak and Ubis \cite{SU}, where they have described the limiting distribution of horocycles at primes.\\

\noindent$\textbf{Acknowledgement.}$ I would like to thank Professor Nimish Shah for his helps and encouragement. Many ideas in this paper come from discussions with him. I would like to thank a referee for suggesting that the Diophantine condition is related to geodesic excursion rates (Lemma \ref{l33}) and that Theorem 7.1 would follow from \cite{MP}. I thank referees for careful reading and suggestions on improvement of proofs at various places. I am grateful to Nicolas de Saxce who also independently suggested Lemma \ref{l33}. I thank the MSRI for hospitality during spring 2015.

\section{Prerequisites}
Throughout this note, if there exists an absolute constant $C>0$ such that $f\leq Cg$, then we write $f\ll g$. If $f\ll g$ and $g\ll f$, then we use the notation $f\sim g$.  We denote $G=\SL(2,\mathbb R)$ and $\Gamma$ a non-uniform lattice in $G$. Let $$N=\{u(t)|t\in\mathbb R\},\quad A=\left\{\left(\begin{array}{cc}s & 0 \\0 & s^{-1}\end{array}\right)\Big| s\in{\mathbb R}_{+}\right\}.$$ For any element $a\in A$, we denote $\alpha(a)=s$.\\ 

One of the ingredients in our calculations is the effective version of the mixing property of unipotent flows in $\Gamma\backslash G$. The following effective version is proved by Kleinbock and Margulis \cite{KM1}. 

\begin{theorem}[Kleinbock and Margulis \cite{KM1}]\label{thm:KM}
There exists $\kappa>0$ such that for any $f,g\in C^\infty (\Gamma\backslash G)$, we have $$\left|(u(t)\cdot f,g)-\int_{\Gamma\backslash G}f\int_{\Gamma\backslash G}g\right|\ll (1+|t|)^{-\kappa}\|f\|_{\infty,1}\|g\|_{\infty,1}.$$ Here $(u(t)\cdot f)(x)=f(xu(t))$ is the right translation of $f$ by $u(t)$.
\end{theorem}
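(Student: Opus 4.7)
The plan is to prove the bound via the representation theory of $G=\SL(2,\mathbb{R})$ applied to the regular representation on $L^2(\Gamma\backslash G)$. First I would subtract the means: setting $f_0=f-\int f\,d\mu$ and $g_0=g-\int g\,d\mu$, the inequality reduces to polynomial decay in $t$ of the matrix coefficient $\langle u(t)\cdot f_0,g_0\rangle$ on the orthogonal complement of the constants, with the rate controlled by first-order Sobolev norms of $f$ and $g$.

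Next I would invoke the direct-integral decomposition of $L^2_0(\Gamma\backslash G)$ into irreducible unitary representations of $G$. For $\SL(2,\mathbb{R})$ the non-trivial irreducibles are the tempered ones (principal series $\pi_{1/2+i\tau}$, discrete series, and limits of discrete series) together with the complementary series $\pi_s$ for $s\in(0,1/2)$. The classical matrix coefficient estimates, obtained by writing $u(t)$ via the $KAK$ decomposition and computing in the induced realization of each series, give for $K$-finite vector pairs a polynomial decay of order $(1+|t|)^{-1/2}$ in the tempered case and $(1+|t|)^{-(1-2s)}$ in the complementary case. Because $\Gamma$ is a lattice, only complementary-series parameters bounded away from $1/2$ appear, by the spectral gap of the Laplacian on $\Gamma\backslash\mathfrak{H}$; this yields a uniform polynomial exponent $\kappa>0$ valid across the whole decomposition.

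The last step is to pass from $K$-finite vectors to smooth vectors. Expanding $f=\sum_m f_m$ and $g=\sum_n g_n$ by $K$-isotypes, each pairwise estimate acquires a polynomial factor in $(1+|m|)(1+|n|)$; this is absorbed by multiplying and dividing by sufficiently large powers of $1+\Delta_K$, whose eigenvalues govern the $K$-type indices, and the resulting double sum converges because first-order $K$-derivatives of $f$ and $g$ are controlled by $\|f\|_{\infty,1}$ and $\|g\|_{\infty,1}$ respectively. The main obstacle is the uniform control of the complementary-series contribution: without a positive spectral gap the exponent $\kappa$ collapses to zero, so this is precisely where the lattice hypothesis enters. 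A secondary technical point is keeping the Sobolev order at $1$ rather than something larger; in the Kleinbock--Margulis framework this is handled by running the whole argument in $L^2$-Sobolev via the Casimir operator and converting to the $L^\infty$ statement at the end using that $\Gamma\backslash G$ has finite volume together with elliptic regularity.
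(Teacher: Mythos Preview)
The paper does not supply its own proof of this theorem: it is quoted in Section~2 as a prerequisite from Kleinbock--Margulis \cite{KM1}, with only Remark~\ref{remark1} indicating how the exponent $\kappa$ is computed in the $\SL(2,\mathbb R)$ case via \cite{V} and \cite{R}. So there is no in-paper argument to compare against; your outline is being measured against the cited literature rather than the present paper.

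That said, your plan is essentially the standard route and is the one underlying the Kleinbock--Margulis result: reduce to $L^2_0$, decompose into irreducibles, use pointwise bounds on matrix coefficients in each series (uniform thanks to the spectral gap), and then sum over $K$-types with Sobolev control. Two points are worth tightening. First, your stated exponents are internally inconsistent: in the $KAK$ decomposition one has $u(t)\sim k_1 a_{2\log|t|}k_2$, so tempered matrix coefficients decay like $|t|^{-1+\epsilon}$, not $|t|^{-1/2}$; correspondingly the complementary-series rate comes out as $|t|^{-2s}$ in the paper's normalization (Remark~\ref{remark1} gives $\kappa=2s-\epsilon$), and you should check that your parametrization matches this. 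Second, the passage to $\|\cdot\|_{\infty,1}$ at order exactly $1$ is delicate: summing over $K$-types with weights $(1+|m|)(1+|n|)$ already costs more than one derivative in $K$ to make the double sum absolutely convergent, so in practice one works with $L^2$-Sobolev norms governed by the Casimir and only converts to the stated $L^\infty$-Sobolev norm at the end, using finite volume; you mention this, but it is the step where the bookkeeping has to be done carefully to land on order $1$ rather than a higher order.
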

\begin{remark}\label{remark1}
Note that when $G=\SL(2,\mathbb R)$, we can calculate $\kappa$ explicitly. Indeed, let $\lambda>0$ denote the smallest eigenvalue in the discrete spectrum of the Laplacian $\Delta$ on $\Gamma\backslash\mathfrak H$, then it follows from \cite{V} formula (9.7) and the technique of Lemma 2.3 in \cite{R} that $\kappa=2s-\epsilon$ for any $\epsilon>0$. Here $s$ is defined as in Remark \ref{remark}.
\end{remark}

Another ingredient in the calculations is the effective version of Dani and Smillie's result \cite{DS} proved by Str\"ombergsson \cite{S}.

\begin{theorem}[Str\"ombergsson \cite{S}]\label{thm:S}
For all $p\in\Gamma\backslash G$, $T\geq10$, and all $f\in C^4(\Gamma\backslash G)$ such that $\|f\|_{\infty,4}<\infty$ we have
\begin{eqnarray*}
\left|\frac1T\int_0^Tf(pu(t))dt-\int_{\Gamma\backslash G}fd\mu\right|\leq
O(\|f\|_{\infty,4})r^{-s}\ln^3(r+2)
\end{eqnarray*}
provided that $r\geq1$. Here $s>0$ is a number depending on the spectrum of the Laplacian on $\Gamma\backslash\mathfrak H$ and $r=r(p,T)=T\cdot e^{-dist(g_{\log T}(p))}$. The implied constants depend only on $\Gamma$ and $p_0$.
\end{theorem}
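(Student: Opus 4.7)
The plan is to derive this sharp effective horocycle equidistribution from the exponential mixing of Theorem \ref{thm:KM} via a thickening-and-rescaling argument, with due care for the cusps of $\Gamma\backslash G$. Subtracting a constant, one may assume $\int f\,d\mu = 0$, so that the task becomes to show the horocycle average itself is $O(\|f\|_{\infty,4}\,r^{-s}\ln^3(r+2))$.

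First I would rescale the horocycle by the geodesic flow. The commutation $a_{\log T}\,u(s) = u(sT)\,a_{\log T}$ gives
$$
\frac1T\int_0^T f(pu(t))\,dt = \int_0^1 \tilde f(p' u(s))\,ds,
$$
where $p' = g_{\log T}(p)$ and $\tilde f(x) = f(xa_{-\log T})$. The task reduces to controlling a unit-length horocycle integral based at $p'$. Since $p'$ sits at hyperbolic distance $\dist(p')$ from the base point, the effective ``ergodic length'' available is precisely $r = T e^{-\dist(p')}$: when $p'$ is deep in a cusp, the unit-length horocycle has traversed only a tiny fraction of its natural periodic closed-horocycle orbit, and one cannot expect mixing on finer scales than $r^{-1}$.

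Next I would thicken the horocycle transverse to $N$. Let $\psi_\epsilon$ be a smooth bump of mass $1$ on a box $B_\epsilon\subset AK$ of radius $\epsilon$, with $\epsilon$ small enough that $p'B_\epsilon u[0,1]$ embeds in $\Gamma\backslash G$ (forcing roughly $\epsilon \ll r/T$). Replacing the horocycle integral by its $B_\epsilon$-thickened version introduces an error of order $\epsilon\|f\|_{\infty,1}$. The thickened integral is a correlation between $\psi_\epsilon$ and $\tilde f$ along a horocycle whose normalized length is of order $r$; Theorem \ref{thm:KM} then produces a decay of order $r^{-\kappa}\epsilon^{-A}$ with some fixed Sobolev power $A$. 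Balancing $\epsilon$ against $r$ yields a bound of shape $r^{-\alpha}$ for some positive $\alpha$ depending on $\kappa$ and $A$.

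The main obstacle, and the reason this theorem is genuinely deeper than the weak version reproved in Appendix B by precisely the above thickening route, is achieving the \emph{sharp} exponent $s$ (matching the spectral gap) together with the $\ln^3(r+2)$ factor. Str\"ombergsson's argument in \cite{S} avoids the thickening loss by working spectrally: decompose $L^2(\Gamma\backslash G)=\mathbb C\oplus L^2_{\mathrm{cusp}}\oplus L^2_{\mathrm{Eis}}$ and treat each piece separately. The cuspidal part is bounded by expansion into Maass forms, each contributing $r^{-s_j}$ with $s_j\ge s$ from the spectral gap. The Eisenstein contribution is evaluated by unfolding the horocycle integral against Eisenstein series at each cusp $\eta_i$ and shifting the $s_0$-contour past $\mathrm{Re}(s_0)=\tfrac12$, picking up the residue at $s_0=1$ (which reconstructs $\int f\,d\mu$) together with controlled contributions from exceptional poles. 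The explicit $r$-dependence arises by matching the horocycle to the Whittaker/Fourier expansion of $f$ at the cusp that $p$ approaches under $g_{\log T}$, and the logarithmic factor comes from the contour shift. Making this expansion effective and uniform in the initial point $p$ is the technical crux of the proof.
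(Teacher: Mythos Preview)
The paper does not prove this theorem; it is quoted from Str\"ombergsson \cite{S} and used as a black-box input (see the remark following the statement, which notes that only a \emph{weaker} version is reproved in Appendix~B). So there is no ``paper's own proof'' to compare against here.

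That said, your proposal accurately reflects the situation. The thickening-and-mixing argument you outline first is exactly the route taken in Appendix~B (Theorem~\ref{bequidistribution}), and as you correctly observe it loses in the exponent: balancing the bump-function Sobolev cost against the mixing decay cannot recover the sharp spectral exponent $s$, only some smaller $\alpha>0$. Your second paragraph, sketching the spectral decomposition into cuspidal and Eisenstein pieces with a contour shift for the latter, is a fair high-level summary of the shape of Str\"ombergsson's actual argument in \cite{S}, and you are right that the technical crux is making the Eisenstein contribution effective and uniform in the initial point. Since the present paper simply imports the result, your sketch goes further than the paper itself does; the only caveat is that it remains a sketch, and the details (in particular the precise origin of the $\ln^3(r+2)$ factor and the uniform control near the cusps) are substantial and not filled in here.
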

\begin{remark}
Here we can take $s$ as in Remark \ref{remark}, i.e., let $\lambda>0$ be the smallest eigenvalue in the discrete spectrum of the Laplacian $\Delta$ on $\Gamma\backslash\mathfrak H$, and $$s=\begin{cases}\frac{1-\sqrt{1-4\lambda}}2, & \text{ if }0<\lambda<\frac14;\\\frac12, &\text{otherwise}.\end{cases}$$ Readers may refer to \cite{S} for more details. We will prove a weaker version of this theorem in Appendix B using only mixing property, and Theorem \ref{main theorem} could be proved by using techniques only from dynamical systems for many Diophantine points (at least for a subset of full Haar measure).
\end{remark}

\section{The Diophantine Condition}
First we deduce Corollary \ref{cor} from the main theorem.
\begin{proof}[Proof of Corollary \ref{cor}]
If $\Gamma$ is a subgroup of finite index of $\SL(2,\mathbb Z)$, then we can pick $\sigma_j\in\SL(2,\mathbb Z)$ $(1\leq j\leq k)$. Now let $p=\Gamma g\in\Gamma\backslash G$ with $$g=\left(\begin{array}{cc}a & b \\c & d\end{array}\right).$$
Note that for each $m_j$, we have $$m_j(\pi_j^{-1}(p))\subseteq g^{-1}\mathbb Z^2\setminus\{0\}=\left\{\left(\begin{array}{c}dm-bn  \\-cm+an \end{array}\right)\Big| (m,n)\in\mathbb Z^2\setminus\{0\}\right\}.$$
If there exist constants $\zeta>0,\mu,\nu>0$ such that for any $(m,n)\in\mathbb Z^2\setminus\{0\}$
\begin{equation}\label{c}
|an-cm|\geq\mu\text{\; or\; }|dm-bn|^\zeta |an-cm|\geq\nu,
\end{equation}
 then $p$ is Diophantine of type $(\zeta,\dots,\zeta)$ by the definition above. In particular, if $a/c\in\mathbb R$ is a Diophantine number of type $\zeta$, i.e. there exists $C>0$ such that for $m/n\in\mathbb Q$, $$|n|^\zeta\left|n\cdot\frac ac-m\right|\geq C,$$ then condition $(\ref{c})$ holds because when $|an-cm|$ is sufficiently small, $$|dm-bn|=\frac{|cdm-bcn|}c=\frac{|cdm-(ad-1)n|}c=\frac{|d(cm-an)+n|}c\sim |n|.$$ Hence, Corollary \ref{cor} follows from the main theorem.
\end{proof}

In order to prove the main theorem, we have to analyze the map $m_j:\Gamma_j\backslash G\to\mathbb R^2/\pm$ for each $j$. The following lemma is well known. The reader may refer to \cite{DS}.  We will denote $B_d$ the ball of radius $d$ around the origin in $\mathbb R^2$.

\begin{lemma}[\cite{DS} Lemma 2.2]\label{l31}
For each $j$ with the maps $\pi_j:\Gamma_j\backslash G\to\Gamma\backslash G$ and $m_j:\Gamma_j\backslash G\to\mathbb R^2/\pm$, there exists a constant $d_j>0$ such that for any $p\in\Gamma\backslash G$ there exists at most one point of $m_j(\pi_j^{-1}(p))$ which lies in $B_{d_j}$.
\end{lemma}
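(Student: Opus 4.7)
The plan is to exploit the $\SL(2,\mathbb R)$-invariance of the determinant form $v\wedge w:=\det(v\,|\,w)$ on $\mathbb R^2$, and to reduce the statement to the standard Fuchsian-group fact that the lower-left entries of elements outside the cusp stabilizer are uniformly bounded away from zero.

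First I would unfold the definitions. For $p=\Gamma g$, the fiber $\pi_j^{-1}(p)$ consists of the cosets $\Gamma_j\gamma g$ with $\gamma\in\Gamma_j\backslash\Gamma$, and $m_j$ sends these to
$$m_j(\pi_j^{-1}(p))=\{\pm g^{-1}\gamma^{-1}\sigma_je_1:\gamma\in\Gamma\}=\pm g^{-1}(\Gamma\sigma_je_1)\subset\mathbb R^2/\pm.$$
Now suppose that two vectors $v_1,v_2$ of this set both lie in $B_{d_j}$, and write $v_i=g^{-1}\gamma_i\sigma_je_1$ after a choice of sign. Since $g\in\SL(2,\mathbb R)$ preserves $\wedge$, setting $\gamma=\gamma_1^{-1}\gamma_2\in\Gamma$ one computes
$$v_1\wedge v_2=\gamma_1\sigma_je_1\wedge\gamma_2\sigma_je_1=\sigma_je_1\wedge\gamma\sigma_je_1=e_1\wedge(\sigma_j^{-1}\gamma\sigma_j)e_1=c',$$
where $c'$ is the lower-left entry of $\sigma_j^{-1}\gamma\sigma_j\in\sigma_j^{-1}\Gamma\sigma_j$. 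Using that $he_1=\pm e_1$ iff $h\in\pm N$, the hypothesis that $v_1,v_2$ are distinct modulo $\pm$ translates to $\sigma_j^{-1}\gamma\sigma_j\notin\pm N$.

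The crux, and the only nontrivial step, is the following lower bound, which I would establish next: there exists $c_j>0$ with $|c'|\geq c_j$ for every $\gamma'\in\sigma_j^{-1}\Gamma\sigma_j\setminus\pm N$. This is the classical horoball-disjointness lemma for Fuchsian groups. Conjugation by $\sigma_j^{-1}$ sends the cusp $\eta_j$ to $\infty$, so $\infty$ is a cusp of $\Gamma':=\sigma_j^{-1}\Gamma\sigma_j$ whose stabilizer in $\Gamma'$ equals $\pm(\Gamma'\cap N)$. The Siegel-set hypotheses on $\sigma_j,\Omega_j,\alpha$ already provide a $y_0>0$ such that the horoball $H_{y_0}=\{z\in\mathfrak H:\mathrm{Im}\,z>y_0\}$ is precisely invariant under this stabilizer. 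For any $\gamma'\in\Gamma'\setminus\pm N$ with lower-left entry $c'$, the image $\gamma' H_{y_0}$ is a Euclidean disk tangent to the real axis at $a'/c'$ of diameter $1/(c'^2 y_0)$; disjointness from $H_{y_0}$ forces $1/(c'^2y_0)\leq y_0$, giving $|c'|\geq 1/y_0=:c_j$.

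Finally I would choose any $d_j$ with $d_j^2<c_j$. If $v_1,v_2\in B_{d_j}\cap m_j(\pi_j^{-1}(p))$ were distinct modulo $\pm$, the computation above would give $|v_1\wedge v_2|=|c'|\geq c_j$, while the trivial bound $|v_1\wedge v_2|\leq\|v_1\|\|v_2\|\leq d_j^2<c_j$ contradicts it. Hence $m_j(\pi_j^{-1}(p))\cap B_{d_j}$ contains at most one point, as desired.
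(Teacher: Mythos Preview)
Your proof is correct. In the body of the paper the lemma is simply quoted from \cite{DS} without proof, so there is nothing to compare directly. However, the paper does reprove the essential content later, in Appendix~A (Lemmas~\ref{al31} and~\ref{al32}), for the normalized situation $\sigma_j=e$: Lemma~\ref{al32} is exactly your determinant identity $|v_1\wedge v_2|=|c'|$, and Lemma~\ref{al31} supplies the uniform lower bound $|c'|\geq C$ for $\gamma'\in\Gamma'\setminus\pm N$.

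The one genuine difference is how the lower bound on $|c'|$ is obtained. The paper's Appendix~A argument is purely from discreteness: since $\Gamma' e_1$ is discrete, there is a smallest nonzero norm; if some $(\alpha,\beta)\in\Gamma' e_1$ had $0<|\beta|$ small, one could shear by an element of $\Gamma'\cap N$ to produce a vector of norm below that minimum. You instead invoke the geometric Shimizu/horoball-disjointness lemma: a precisely invariant horoball at $\infty$ forces $1/(c'^2y_0)\leq y_0$. Both arguments are standard and short; the discreteness argument is slightly more self-contained (it uses only that $\Gamma'\cap N$ is cocompact in $N$ and $\Gamma' e_1$ is discrete), while your geometric argument makes the connection to cusp geometry more transparent and yields the explicit constant $c_j=1/y_0$ tied to the Siegel-set parameter.
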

\begin{remark} \label{rem:dj}
We will fix these $d_j$'s for $j=1,2,\dots, k$ throughout this note.
\end{remark}

\begin{lemma}\label{l}
If $p\in\Gamma\backslash G$ is Diophantine of type $(\kappa_1,\kappa_2,\dots,\kappa_k)$, then the orbit $\{g_t(p)|t\geq0\}$ is non-divergent.
\end{lemma}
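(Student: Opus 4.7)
The plan is a proof by contradiction: suppose $\{g_t(p):t\geq 0\}$ is divergent, so that $\dist(g_t(p))\to\infty$, and derive a contradiction from the Diophantine condition on $p$.

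First I would record two observations. Writing $p=\Gamma g$ and $a_t=\operatorname{diag}(e^{t/2},e^{-t/2})$, the definition of $m_j$ gives the equivariance
\[
m_j(\pi_j^{-1}(g_t(p)))=a_t^{-1}\cdot m_j(\pi_j^{-1}(p)).
\]
Moreover, writing $g$ in Siegel form $g=\gamma\sigma_j nak$ with $a=\operatorname{diag}(s,s^{-1})$, one computes $m_j(\Gamma_j\sigma_j nak)=s^{-1}k^{-1}e_1$, so the depth of $p$ in the cusp $\eta_j$ (\emph{i.e.}\ large $s$) is precisely characterized by some element of $m_j(\pi_j^{-1}(p))$ having small norm.

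Next, since $\Gamma\backslash G$ minus a suitable compact core decomposes into $k$ disjoint cusp neighborhoods $V_1,\ldots,V_k$, the continuous path $g_t(p)$, which by divergence eventually avoids the core, must be contained in a single $V_j$ for all $t\geq T_0$. As the orbit goes to infinity inside $V_j$, the previous paragraph combined with Lemma~\ref{l31} shows that for each such $t$ there is a unique vector $w_t\in a_t^{-1}m_j(\pi_j^{-1}(p))$ lying in $B_{d_j}$, and that $|w_t|\to 0$. Writing $w_t=a_t^{-1}v^{(t)}$ for a unique $v^{(t)}\in m_j(\pi_j^{-1}(p))$, I would argue that $v^{(t)}$ is constant on $[T_0,\infty)$: since $m_j(\pi_j^{-1}(p))$ is discrete and $t\mapsto a_t^{-1}v$ is continuous for each fixed $v$, any change in $v^{(t)}$ would force a moment at which no element of $a_t^{-1}m_j(\pi_j^{-1}(p))$ lies in $B_{d_j}$, contradicting the existence of $w_t$ with $|w_t|\to 0$.

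With $v^{(t)}=v=(a,b)^\top$ constant for $t\geq T_0$, the contradiction is immediate from the direct calculation
\[
|a_t^{-1}v|^2=e^{-t}a^2+e^tb^2\geq e^tb^2,
\]
so $|a_t^{-1}v|\to 0$ would force $b=0$; but the Diophantine hypothesis says $(a,b)$ satisfies either $|b|\geq\mu_j$ or $|a|^{\kappa_j}|b|\geq\nu_j$, and both alternatives require $b\neq 0$. The main obstacle in this argument is the soft but slightly technical step of showing that the orbit is eventually trapped in a single cusp and that the associated vector $v^{(t)}$ stabilizes; these rely on the reduction theory (Siegel set decomposition) together with the uniqueness in Lemma~\ref{l31}. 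The remainder of the proof is an elementary one-line estimate.
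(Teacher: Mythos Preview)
Your argument is correct and reaches the same contradiction as the paper --- a vector $(a,0)^\top\in m_j(\pi_j^{-1}(p))$ --- but by a different route. The paper bypasses the geometric analysis entirely: it quotes the equivalence (Lemma~11.29 of \cite{EW}) between divergence of $\{g_t(p)\}_{t\geq0}$ and periodicity of $\{pu(t)\}$, and then invokes Lemma~2.1 of \cite{DS}, which says directly that periodicity forces some $(a,b)\in m_j(\pi_j^{-1}(p))$ to have $b=0$. Your approach instead works hands-on with the cusp geometry: you show the orbit is eventually trapped in a single cusp, use Lemma~\ref{l31} and the Siegel decomposition to extract a small vector $w_t=a_t^{-1}v^{(t)}$, argue by connectedness that $v^{(t)}$ stabilizes to a fixed $v=(a,b)^\top$, and then read off $b=0$ from $e^{-t}a^2+e^tb^2\to 0$. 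The paper's proof is shorter because it outsources the work to two citations; yours is more self-contained and makes explicit the mechanism (the $a_t^{-1}$-equivariance and the one-line estimate) that is hidden inside those references. The stabilization step for $v^{(t)}$ is the only place requiring care: the cleanest way to phrase it is that for each fixed $v$ the set $\{t\geq T_0:v^{(t)}=v\}$ is both open (by continuity of $t\mapsto a_t^{-1}v$) and closed (by uniqueness in Lemma~\ref{l31}), hence all of $[T_0,\infty)$.
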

\begin{proof}
Suppose that $\{g_t(p)|t\geq0\}$ is divergent. Let $\eta_j$ be the cusp where $\{g_t(p)|t\geq0\}$ diverges. By Lemma 11.29 in \cite{EW}, we know that $\{g_t(p)|t\geq0\}$ is divergent if and only if $\{pu(t)\}$ is periodic in $\Gamma\backslash G$. Combined with Lemma 2.1 in \cite{DS}, this would imply that there is a point $\left(\begin{array}{c}x \\y\end{array}\right)\in m_j(\pi_j^{-1}(p))$ lying on the $x$-axis in $\mathbb R^2$, i.e. $y=0$, which contradicts the Diophantine condition. Therefore, $\{g_t(p)|t\geq0\}$ is non-divergent.
\end{proof}

\begin{definition}
For $p\in\Gamma\backslash G$, we define $$\|p\|_j:=\min\left\{\left\|\left(\begin{array}{c}a \\b\end{array}\right)\right\|\Big|\left(\begin{array}{c}a \\b\end{array}\right)\in m_j(\pi_j^{-1}(p))\right\}$$ where $\|\cdot\|$ denotes the standard Euclidean norm in $\mathbb R^2$. Moreover, we define $$d(p)=\min\{\|p\|_j|j=1,2,\dots,k\}.$$ 
\end{definition}

\begin{lemma}\label{l32}
For any $p\in\Gamma\backslash SL(2,\mathbb R)$, we have $$e^{\dist(p)}\sim\frac1{d(p)^2}.$$
\end{lemma}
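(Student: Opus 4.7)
The plan is to express both sides of the comparison in terms of the $\Gamma$-invariant cusp-height functions
\[
y_j(z) := \sup_{\gamma\in\Gamma}\operatorname{Im}(\sigma_j^{-1}\gamma z),\qquad z\in\mathfrak H,\ j=1,\dots,k,
\]
and then to establish separately the exact identity $d(p)^{-2} = \max_j y_j(\pi(p))$ and the classical comparison $e^{\dist(p)} \sim \max_j y_j(\pi(p))$. Combining the two immediately gives the lemma.

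For the first step, write $p=\Gamma g$ and observe that $\pi_j^{-1}(p)=\{\Gamma_j\gamma g:\gamma\in\Gamma_j\backslash\Gamma\}$, so
\[
m_j(\pi_j^{-1}(p)) = \{\,g^{-1}\gamma^{-1}\sigma_j e_1 : \gamma\in\Gamma\,\} \subset \mathbb R^2/\pm.
\]
The computational heart is the identity $\|h^{-1}e_1\|^2 = \operatorname{Im}(h\cdot i)^{-1}$ valid for every $h\in G$: indeed, for $h=\bigl(\begin{smallmatrix} a & b \\ c & d\end{smallmatrix}\bigr)$, direct computation gives $\operatorname{Im}(h\cdot i)=(c^2+d^2)^{-1}$ and $h^{-1}e_1=\bigl(\begin{smallmatrix} d \\ -c\end{smallmatrix}\bigr)$. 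Applying this with $h = \sigma_j^{-1}\gamma g$ yields $\|g^{-1}\gamma^{-1}\sigma_j e_1\|^2 = \operatorname{Im}(\sigma_j^{-1}\gamma(g\cdot i))^{-1}$, and minimising over $\gamma\in\Gamma$ gives $\|p\|_j^{-2}=y_j(\pi(p))$. Hence $d(p)^{-2}=\max_j y_j(\pi(p))$, as claimed.

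For the second comparison I invoke standard reduction theory on $\Gamma\backslash\mathfrak H$. The Siegel-set decomposition recalled in the introduction shows that every $z\in\mathfrak H$ has a $\Gamma$-translate $\gamma z$ lying in some $\sigma_j N_{\Omega_j}A_\alpha K\cdot i$, so $\sigma_j^{-1}\gamma z = x+iy$ with $x\in\Omega_j$ bounded and $y\ge\alpha^2$; enlarging $\alpha$ once for all so that sufficiently deep horocyclic neighbourhoods of the inequivalent cusps are pairwise disjoint, one has $y = y_j(z) = \max_{j'} y_{j'}(z)$ in this Siegel tail. The classical upper-half-plane asymptotic $d_{\mathfrak H}(z_0, x+iy) = \log y + O_{z_0}(1)$ as $y\to\infty$, applied at $z_0 = \sigma_j^{-1}\gamma^{-1}p_0$ (which stays in a fixed compact subset of $\mathfrak H$ as $z$ varies, since one can replace $\gamma$ by any fixed element of a finite system of coset representatives), then gives $\dist(p) = \log\max_{j'}y_{j'}(\pi(p)) + O(1)$. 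On the complementary compact region of $\Gamma\backslash\mathfrak H$, both $\dist(p)$ and $\log\max_j y_j$ are uniformly bounded and the comparison $e^{\dist(p)}\sim 1\sim \max_j y_j(\pi(p))$ is trivial.

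Both ingredients are classical, so the proof is mostly bookkeeping. The only step I expect to require mild care is the uniqueness of the cusp attained in the Siegel tail — i.e.\ that $y_j(z)=\max_{j'}y_{j'}(z)$ on the relevant region — which is the standard fact that sufficiently deep horocyclic neighbourhoods of inequivalent cusps are disjoint in $\Gamma\backslash\mathfrak H$. With this in hand, the Iwasawa identity $\|h^{-1}e_1\|^2=\operatorname{Im}(h\cdot i)^{-1}$ and the upper-half-plane distance asymptotic complete the proof.
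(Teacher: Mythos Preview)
Your proof is correct and follows essentially the same strategy as the paper's: both reduce to the Siegel-set decomposition, use the Iwasawa identity relating $\|h^{-1}e_1\|$ to $\operatorname{Im}(h\cdot i)$, and handle the complement of the cusp neighbourhoods by compactness. Your packaging via the global identity $d(p)^{-2}=\max_j y_j(\pi(p))$ is a clean reformulation of the paper's local NAK computation on each $C_j$, but the content is the same.
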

\begin{proof}
Recall that $\eta_j$ $(1\leq j\leq k)$ are the inequivalent cusps of $\Gamma\backslash\mathfrak H$. For each $1\leq j\leq k$, we fix a small neighborhood $C_j$ of $\eta_j$ in $\Gamma\backslash G$ such that $C_1,C_2,\dots, C_k$ are pairwise disjoint. Also we fix a point $q_j\in C_j$ for each $1\leq j\leq k$.
We observe that it suffices to prove the lemma for $p\in C_j$ $(j=1,2,\dots,k)$ since the complement of $\bigcup C_j$ is compact. 
Let $p\in C_j$ for some $j\in\{1,2,\dots,k\}$. Let $\alpha_j>0$ be such that $\pi_j$ maps $\sigma_jN_{\Omega_j} A_{\alpha_j}K$ isomorphically to $C_j$. 
Then we can pick a representative for $p$ in $\sigma_jN_\Omega A_{\alpha_j}K$, say $\sigma_j n_pa_pk_p$, i.e. $p=\Gamma\sigma_j n_pa_pk_p=\pi_j(\Gamma_j\sigma_j n_pa_pk_p)$. 
By definition we know that $$d(p)=\|p\|_j=\|k_p^{-1}a_p^{-1}e_1\|=\alpha(a_p)^{-1}.$$ 
On the other hand, in the fundamental domain of $\Gamma\backslash\mathfrak H$, the point corresponding to $p=\pi_j(\Gamma_j\sigma_j n_pa_pk_p)\in C_j$ is equal to $$\Gamma\sigma_j n_pa_pk_p\cdot i=\Gamma\sigma_j(n_pa_p\cdot i)=\Gamma\sigma_j(n_p\cdot(\alpha(a_p)^2i)).$$ 
Since $\sigma_j$ is fixed and $n_p$ is in the compact set $N_{\Omega_j}$ of $N$, we obtain $$|d_{\mathfrak H}(\pi(q_j),\pi(p))-\ln{\alpha(a_p)^2}|\leq C_j$$ for some constant $C_j>0$. Since $q_j$ is fixed, we have $$|\dist(p)-d_{\mathfrak H}(\pi(q_j),\pi(p))|\leq C_j'$$ for some $C_j'>0$. Therefore we get $$|\dist(p)-\ln{\alpha(a_p)^2}|\leq C$$ for $C=\max\{C_1+C_1',C_2+C_2',\dots,C_k+C_k'\}$ and hence $$e^{\dist(p)}=e^{(\dist(p)-\ln{\alpha(a_p)^2})+\ln{\alpha(a_p)^2}}\sim e^{\ln{\alpha(a_p)^2}}=\alpha(a_p)^2=\frac1{d(p)^2}.$$
\end{proof}

Now we prove that the Diophantine condition on $p\in\Gamma\backslash G$ can be defined by the excursion rate of the geodesic orbit $\{g_t(p)\}_{t>0}$. We need some notations. As in the proof of Lemma \ref{l32}, let $\eta_1,\eta_2,\dots,\eta_k$ be the inequivalent cusps of $\Gamma\backslash\mathfrak H$ and we choose the neighborhood $C_j$ of $\eta_j$ $(1\leq j\leq k)$ in $\Gamma\backslash G$ such that $C_1,C_2,\dots,C_k$ are pairwise disjoint. For each $1\leq j\leq k$, we define a function on $\Gamma\backslash G$ by $$\dist^{(j)}(p)=\begin{cases} \dist(p), & \text{ if }p\in C_j;\\ 0, &\text{ otherwise.}
\end{cases}$$

\begin{lemma}\label{l33}
A point $p\in\Gamma\backslash G$ has Diophantine type $(\kappa_1,\kappa_2,\dots,\kappa_k)$ if and only if $\kappa_j\geq1$ and $$\limsup_{t\to\infty}\left(\dist^{(j)}(g_t(p))-\frac{\kappa_j-1}{\kappa_j+1}t\right)<\infty$$ for each $j\in\{1,2,\dots,k\}$.
\end{lemma}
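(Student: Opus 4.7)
My plan is to extract an explicit formula for $\dist^{(j)}(g_t(p))$ in terms of the vectors in $m_j(\pi_j^{-1}(p))$ and then compare the two conditions by a short calculus argument. First, observe that $m_j$ is geodesic equivariant,
\[
m_j(\Gamma_j g a_t) = (g a_t)^{-1}\sigma_j e_1 = a_t^{-1} m_j(\Gamma_j g),
\]
so each vector $(a,b)\in m_j(\pi_j^{-1}(p))$ contributes the vector $(e^{-t/2}a, e^{t/2}b)$ to $m_j(\pi_j^{-1}(g_t(p)))$, of squared norm $e^{-t}a^2 + e^t b^2$. By Lemma \ref{l31}, whenever this quantity, minimized over $m_j(\pi_j^{-1}(p))$, drops below $d_j^2$, the minimizer is a unique vector $(a^*, b^*)$; precisely at such times $g_t(p)$ lies in the fixed small neighborhood $C_j$ of the cusp $\eta_j$ used in Lemma \ref{l32}, with the minimizer unchanged throughout each such excursion interval. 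Invoking Lemma \ref{l32} then yields
\[
\dist^{(j)}(g_t(p)) = -\ln\bigl(e^{-t}(a^*)^2 + e^t(b^*)^2\bigr) + O(1)
\]
on each excursion, and $\dist^{(j)}(g_t(p)) = 0$ otherwise.

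Fix $(a,b)$ with $|b|$ small and set $F_{(a,b)}(t) := -\ln(e^{-t}a^2 + e^t b^2) - \tfrac{\kappa_j - 1}{\kappa_j + 1}t$. A direct differentiation shows that $F_{(a,b)}$ is concave, with unique critical point $t^{**} = \ln(|a|/(\sqrt{\kappa_j}\,|b|))$ and
\[
\sup_{t\in\mathbb{R}} F_{(a,b)}(t) = -\frac{2}{\kappa_j + 1}\ln(|a|^{\kappa_j}|b|) + C_{\kappa_j}.
\]
Combined with the previous step, the supremum of $\dist^{(j)}(g_t(p)) - \tfrac{\kappa_j - 1}{\kappa_j + 1}t$ over the excursion associated to $(a^*, b^*)$ is $-\tfrac{2}{\kappa_j + 1}\ln(|a^*|^{\kappa_j}|b^*|) + O(1)$. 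Outside excursions $\dist^{(j)} = 0$, and once $\kappa_j\geq 1$ the term $-\tfrac{\kappa_j - 1}{\kappa_j + 1}t$ is non-positive, so only excursion contributions matter. Hence, assuming $\kappa_j \geq 1$,
\[
\limsup_{t\to\infty}\Bigl(\dist^{(j)}(g_t(p)) - \tfrac{\kappa_j - 1}{\kappa_j + 1}t\Bigr) < \infty \iff \inf_{(a,b)\in m_j(\pi_j^{-1}(p)),\,|b|<\mu} |a|^{\kappa_j}|b| > 0,
\]
which is exactly the Diophantine condition of type $\kappa_j$ at $\eta_j$.

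It remains to check that in the forward direction $\kappa_j \geq 1$ is forced. If $p$ is Diophantine, Lemma \ref{l} gives non-divergence of $\{g_t(p)\}$, and a Dirichlet/Minkowski-type argument (the $\Gamma$-orbit $\Gamma \sigma_j e_1 \subset \mathbb{R}^2$ meets every direction densely, and covolume counts in the discrete set $g_0^{-1}\Gamma\sigma_j e_1$ produce good approximations when $p = \Gamma g_0$) yields infinitely many vectors $(a_n, b_n)\in m_j(\pi_j^{-1}(p))$ with $|a_n b_n|\le C_p$ and $|a_n|\to\infty$. For such a sequence $|a_n|^{\kappa_j}|b_n|\le C_p |a_n|^{\kappa_j - 1}\to 0$ when $\kappa_j < 1$, contradicting the Diophantine lower bound, so $\kappa_j \geq 1$. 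The main technical hurdle will be rigorously verifying the one-to-one matching between excursion intervals and minimizing vectors via Lemma \ref{l31} and keeping the various $O(1)$ error terms from Lemma \ref{l32} uniform across all cusps; the remaining computations are routine calculus plus a standard Dirichlet-type input.
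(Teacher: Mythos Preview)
Your approach is correct and in the same spirit as the paper's, but more explicitly computational. Both proofs rest on Lemma~\ref{l32} (so that $\dist^{(j)}(g_t(p))\approx -\ln\|g_t(p)\|_j^2$) together with the equivariance $m_j(\pi_j^{-1}(g_t(p)))=a_t^{-1}m_j(\pi_j^{-1}(p))$. Where you optimize $F_{(a,b)}(t)$ by calculus and obtain the exact identity $\sup_t F_{(a,b)}=-\tfrac{2}{\kappa_j+1}\ln(|a|^{\kappa_j}|b|)+C_{\kappa_j}$, the paper instead argues each direction by a hands-on inequality: in the Diophantine $\Rightarrow$ limsup direction it uses $\|(x,y)\|\ge |x|^{\kappa_j/(\kappa_j+1)}|y|^{1/(\kappa_j+1)}$, and in the converse it simply plugs $t=\ln|a/b|$ into $e^{-t}a^2+e^tb^2\ge Ce^{\frac{1-\kappa_j}{1+\kappa_j}t}$. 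Your route has the advantage of making the connection $\limsup\leftrightarrow\inf|a|^{\kappa_j}|b|$ completely transparent; the paper's route avoids having to check that the optimizer $t^{**}$ actually lies in the relevant excursion interval (you will need the observation that $|a_nb_n|\to 0$ whenever $|a_n|^{\kappa_j}|b_n|\to 0$ and $\kappa_j\ge 1$ to secure this).

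One point where the paper is noticeably cleaner is the deduction of $\kappa_j\ge 1$. You invoke a ``Dirichlet/Minkowski-type argument'' to produce vectors with $|a_nb_n|\le C_p$ and $|a_n|\to\infty$; this is vague as stated and in fact not needed as a separate input. The paper simply uses non-divergence (Lemma~\ref{l}) to find times $t_i\to\infty$ with $g_{t_i}(p)$ in a fixed compact set, picks $(x_i,y_i)\in m_j(\pi_j^{-1}(g_{t_i}(p)))$ with $\|(x_i,y_i)\|\le C_0$, and reads off from the transported Diophantine inequality $|x_i|^{\kappa_j}|y_i|\ge \nu_j e^{(1-\kappa_j)t_i/2}$ that the right side must stay bounded, forcing $\kappa_j\ge 1$. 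Your plan already invokes Lemma~\ref{l}, so you can replace the Dirichlet sketch by this two-line argument.
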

\begin{proof}
As in the proof of Lemma \ref{l32}, we know that there exists $\alpha_j>0$ such that $\pi_j$ maps $\sigma_jN_{\Omega_j}A_{\alpha_j}K$ isomorphically to $C_j$. Also using the same argument in the proof of Lemma \ref{l32}, we can get that for any $q\in C_j$, 

\begin{eqnarray}\label{eqj}
e^{\dist^{(j)}(q)}\sim\frac1{\|q\|^2_j}.
\end{eqnarray}

If $p\in\Gamma\backslash G$ is Diophantine of type $(\kappa_1,\kappa_2,\dots,\kappa_k)$, then for each $j$,  any $\left(\begin{array}{c}a \\b\end{array}\right)\in m_j(\pi_j^{-1}(p))$ satisfies $$|b|\geq\mu_j\text{ or } |a|^{\kappa_j}|b|\geq\nu_j.$$ Since 
\begin{eqnarray}\label{id}
m_j(\pi_j^{-1}(g_t(p)))=\left(\begin{array}{cc}e^{-t/2} & 0 \\0 & e^{t/2}\end{array}\right)m_j(\pi_j^{-1}(p)),
\end{eqnarray}
this implies that any $\left(\begin{array}{c}x \\y\end{array}\right)\in m_j(\pi_j^{-1}(g_t(p)))$ satisfies 
\begin{eqnarray}\label{eq0}
|y|\geq e^{t/2}\mu_j\text{ or }|x|^{\kappa_j}|y|\geq\nu_j e^{(1-\kappa_j)t/2}.
\end{eqnarray}
Note that this holds for all $t>0$. 

By Lemma \ref{l}, we know that $\{g_t(p)|t\geq0\}$ is nondivergent. So there exists a compact subset $S\subset\Gamma\backslash G$ such that $g_{t_i}(p)$ remains in $S$ for infinitely many $t_i\to\infty$. By the compactness of $S$, we can find a constant $C_0>0$ such that for each $t_i$, there exists $\left(\begin{array}{c}x_i \\y_i\end{array}\right)\in m_j(\pi_j^{-1}(g_{t_i}(p)))$ satisfying $$\|(x_i,y_i)\|\leq C_0.$$
This implies via equation (\ref{eq0}) that $\kappa_j\geq1$. Now fix $t$ and for any$\left(\begin{array}{c}x \\y\end{array}\right)\in m_j(\pi_j^{-1}(g_t(p)))$ we have $$\|(x,y)\|\geq|y|\geq e^{t/2}\mu_j\text{ or }\|(x,y)\|\geq\max\{|x|,|y|\}\geq|x|^{\frac{\kappa_j}{\kappa_j+1}}|y|^{\frac1{\kappa_j+1}}\geq\nu_j^{\frac1{\kappa_j+1}}e^{\frac{1-\kappa_j}{1+\kappa_j}\frac t2}.$$ Therefore, $\|g_t(p)\|_j\geq e^{t/2}\mu_j$ or $\nu_j^{\frac1{\kappa_j+1}}e^{\frac{1-\kappa_j}{1+\kappa_j}t/2}$. By equation (\ref{eqj}) and $\kappa_j\geq1$, we get that
$$\limsup_{t\to\infty}\left(\dist^{(j)}(g_t(p))-\frac{\kappa_j-1}{\kappa_j+1}t\right)<\infty.$$

Conversely, if the above inequality holds for each $j$ with $\kappa_j\geq1$,  then by equation (\ref{eqj}) there exists a constant $C>0$ such that for all $t>0$ we have $$\|g_t(p)\|_j\geq Ce^{\frac{1-\kappa_j}{1+\kappa_j}\frac t2}.$$ This implies via equation (\ref{id}) that for any $\left(\begin{array}{c}a \\b\end{array}\right)\in m_j(\pi_j^{-1}(p))$ we have
\begin{eqnarray}\label{jeq}
e^{-t}a^2+e^tb^2\geq Ce^{\frac{1-\kappa_j}{1+\kappa_j}t}.
\end{eqnarray}
By discreteness of $m_j(\pi_j^{-1}(p))$ in $\mathbb R^2$, there exists a constant $\mu_j>0$ such that if $\left(\begin{array}{c}a \\b\end{array}\right)\in m_j(\pi_j^{-1}(p))$ satisfies $|b|<\mu_j$, then $|b|<|a|$. Now for such $\left(\begin{array}{c}a \\b\end{array}\right)$, we take $t>0$ such that $e^{-t}a^2=e^tb^2$. By equation (\ref{jeq}), this implies that $|b|\geq \sqrt{\frac C2}e^{-\frac{\kappa_j}{1+\kappa_j}t}$ and hence $$|a|^{\kappa_j}|b|=|e^tb|^{\kappa_j}|b|\geq\left(\sqrt{\frac C2}\right)^{\kappa_j+1}.$$ This implies that $p$ is Diophantine of type $(\kappa_1,\kappa_2,\dots,\kappa_k)$.
\end{proof}

\section{$(C,\alpha;\rho,\epsilon_0)$-good functions in presence of the Diophantine condition }
This section will be important in the proof of the main theorem. First, we need a modified version of the concept of $(C,\alpha)$-good functions (see \cite{KM2} for the definition of $(C,\alpha)$-good functions).

\begin{definition}
A function $f(x)$ is said to be $(C,\alpha;\rho,\epsilon_0)$-good if for any $0<\epsilon<\epsilon_0$ and any $I=(x_1,x_2)\subset [1,\infty)$ with $|f(x_1)|=\rho$, we have
$$m(\{x\in I||f(x)|\leq\epsilon\})\leq C\left(\frac\epsilon\rho\right)^\alpha m(I)$$
where $m$ denotes the Lebesgue measure on $\mathbb R$.
\end{definition}

Now we shall begin to study a special class of functions and prove that they are $(C,\alpha;\rho,\epsilon_0)$-good for some $C,\alpha,\rho$ and $\epsilon_0>0$. Note that we restrict these functions to the domain $[1,\infty)$.\\

\begin{lemma}\label{l41}
Let $\kappa,\mu,\nu>0$ and $0<\gamma<\frac1{\kappa+4}$. Let $\left(\begin{array}{c}a \\b\end{array}\right)\in\mathbb R^2\setminus\{0\}$ be such that $$|b|\geq\mu\text{\; or\; }|a|^\kappa|b|\geq\nu.$$Then there exist $C,\epsilon_0>0$ such that $$f(x)=(bx^{\frac34+\gamma}-ax^{-\frac14})^2(x^{\frac14-\frac1{\kappa+4}})^2+(bx^{\frac14})^2(x^{\frac14-\frac1{\kappa+4}})^2$$ is $(C,\frac12;\rho,\epsilon_0)$-good on $[1,\infty)$, where $\rho$ is any fixed constant $\leq f(1)$. Here the constants $C,\epsilon_0$ depend only on $\rho, \kappa,\mu,\nu$ and $\gamma$.
\end{lemma}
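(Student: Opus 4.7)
The plan is to reduce the $(C,\tfrac12;\rho,\epsilon_0)$-good property of $f$ to the classical fact that a nonnegative polynomial of degree $2$ is $(C_2,\tfrac12)$-good, after factoring out a fractional power of $x$ and using the Diophantine hypothesis to tame the fractional-exponent contributions. First I would rewrite
\begin{equation*}
f(x)=x^{-2/(\kappa+4)}\phi(x),\qquad \phi(x)=(bx^{1+\gamma}-a)^2+b^2x,
\end{equation*}
so that on $[1,\infty)$ the prefactor $x^{-2/(\kappa+4)}$ is comparable between $1$ and $x_2^{-2/(\kappa+4)}$ on any $I=(x_1,x_2)\subset[1,\infty)$. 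Since both summands of $\phi$ are nonnegative, any $x$ with $f(x)\le\epsilon$ simultaneously satisfies
\begin{equation*}
|bx^{1+\gamma}-a|\le\sqrt{\epsilon}\,x^{1/(\kappa+4)}\quad\text{and}\quad |b|\,x^{(\kappa+2)/(2(\kappa+4))}\le\sqrt{\epsilon},
\end{equation*}
which will be the two basic constraints driving the argument.

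Next I would split according to the Diophantine hypothesis. In the easy case $|b|\ge\mu$, the second constraint already forces $x\le(\sqrt{\epsilon}/\mu)^{2(\kappa+4)/(\kappa+2)}$, which is strictly less than $1$ whenever $\epsilon<\mu^2$; taking $\epsilon_0\le\mu^2$ makes the bad set empty in $[1,\infty)$ and the inequality in the definition is vacuous. In the complementary case $|b|<\mu$ with $|a|^\kappa|b|\ge\nu$, the Diophantine inequality yields the lower bound $|a|\ge(\nu/|b|)^{1/\kappa}$, which combined with the second constraint above forces $|a|\gtrsim\epsilon^{-1/(2\kappa)}x^{(\kappa+2)/(2\kappa(\kappa+4))}$, while the first constraint supplies the matching upper bound $|a|\le|b|x^{1+\gamma}+\sqrt{\epsilon}\,x^{1/(\kappa+4)}$. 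The hypothesis $\gamma<1/(\kappa+4)$ is precisely what is needed to make these two bounds on $|a|$ compatible only when $x$ lies in a short window whose length is controlled by a power of $\epsilon$.

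To convert the pointwise information into the required ratio estimate $m(E)\le C(\epsilon/\rho)^{1/2}m(I)$ with $E=\{x\in I:|f(x)|\le\epsilon\}$, I would use the quasi-polynomial structure of $\phi$: the derivative of $bx^{1+\gamma}-a$ is $b(1+\gamma)x^\gamma$, which is bounded away from zero on any compact sub-interval of $[1,\infty)$, so locally the first summand $(bx^{1+\gamma}-a)^2$ behaves like the square of an affine function and inherits a $(C,\tfrac12)$-type measure bound from the classical polynomial-of-degree-two estimate. The normalization $|f(x_1)|=\rho$ at the left endpoint together with the monotonic growth of $b^2 x^{(\kappa+2)/(\kappa+4)}$ in $x$ yields the lower bound $\sup_I|f|\gtrsim\rho$ that is needed to replace the usual $\|f\|_I$ in the denominator of the $(C,\tfrac12)$-good inequality.

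The main obstacle is the non-integer exponent $1+\gamma$: the Kleinbock--Margulis lemma that polynomials of degree $n$ are $(C_n,1/n)$-good cannot be invoked directly, and one must verify by hand that, under the Diophantine hypothesis and for $\gamma<1/(\kappa+4)$, the fractional-power terms cannot conspire with the Diophantine-constrained coefficients $a,b$ to produce a plateau on which $\phi$ is uniformly small over an interval that is long compared with $m(I)$. This is precisely where the restriction $\gamma<1/(\kappa+4)$ enters, and its quantitative verification is the technical heart of the argument.
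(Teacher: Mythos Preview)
Your outline shares the paper's opening moves: the factorization $f(x)=x^{-2/(\kappa+4)}\bigl((bx^{1+\gamma}-a)^2+b^2x\bigr)$ is correct, the case $|b|\ge\mu$ is disposed of exactly as you say, and you rightly isolate the hard case $|b|<\mu$, $|a|^\kappa|b|\ge\nu$. But the proposal stops short of the actual proof: you explicitly leave the ``quantitative verification'' undone, and that verification is the whole lemma. Concretely, invoking a $(C,\tfrac12)$-type estimate for $(bx^{1+\gamma}-a)^2$ via ``square of an affine function'' requires controlling the ratio of the derivative $b(1+\gamma)x^\gamma$ over the interval $I=(x_1,x_2)$, and your separated prefactor $x^{-2/(\kappa+4)}$ contributes another factor of $(x_2/x_1)^{2/(\kappa+4)}$. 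Both of these depend on $x_2/x_1$, which a priori depends on $a,b$; without a uniform bound on $x_2/x_1$ in terms of $\rho,\kappa,\mu,\nu,\gamma$ alone, you do not get constants $C,\epsilon_0$ independent of $(a,b)$, and the lemma is not proved. Your second paragraph produces pointwise constraints on $x$ in terms of $\epsilon$, but these do not by themselves yield the required ratio $m(E)/m(I)\le C(\epsilon/\rho)^{1/2}$.

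The paper closes exactly this gap, and the mechanism is worth noting because it uses $\gamma<1/(\kappa+4)$ in a sharper way than your sketch suggests. Rather than separating the prefactor, the paper works with $g(x)=bx^{1+\gamma-1/(\kappa+4)}-ax^{-1/(\kappa+4)}$, which is increasing while $g'$ is \emph{decreasing} (this is where $\gamma<1/(\kappa+4)$ enters). A mean-value argument then reduces the $(C,\tfrac12)$ estimate to bounding $g'(x_1)/g'(x_3)\le (x_3/x_1)^{(\kappa+5)/(\kappa+4)}$, where $x_3$ is the point with $g(x_3)=+\sqrt{\rho/2}$. Writing $x_1=\delta_1x_0$, $x_3=\delta_2x_0$ with $x_0=(a/b)^{1/(1+\gamma)}$ the zero of $g$, the Diophantine bound $|a|^\kappa|b|\ge\nu$ together with $|b|<\mu$ forces $|\delta^{1+\gamma-1/(\kappa+4)}-\delta^{-1/(\kappa+4)}|$ to be bounded by an explicit constant in $\rho,\kappa,\mu,\nu,\gamma$, hence $\delta_2/\delta_1$ is uniformly bounded. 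This is the step your proposal is missing; once you see it, your approach and the paper's are essentially the same, but keeping the prefactor inside $g$ (so that $g'$ is monotone) is what makes the argument clean.
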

\begin{proof}
We observe that if $|b|\geq\mu$, then $f(x)\geq(bx^{\frac14})^2(x^{\frac14-\frac1{\kappa+4}})^2\geq b^2\geq\mu^2$ and $f(x)$ is automatically $(C,\alpha;\rho,\epsilon_0)$-good for any $C,\alpha,\rho$ and $\epsilon_0=\mu^2/2$. Therefore, in the following we assume that $|b|<\mu$ and hence $|a|^\kappa|b|\geq\nu$. We have two cases: $ab<0$ and $ab>0$.\\

\noindent Case 1: $ab<0.$ Our function $f(x)$ then becomes $$f(x)=(|b|x^{\frac34+\gamma}+|a|x^{-\frac14})^2(x^{\frac14-\frac1{\kappa+4}})^2+(bx^{\frac14})^2(x^{\frac14-\frac1{\kappa+4}})^2.$$ We have
\begin{eqnarray*}
f(x)&\geq&\left((|b|x^{\frac34+\gamma}+|a|x^{-\frac14})x^{\frac14-\frac1{\kappa+4}}\right)^2\\
&\geq&\left(\max\{|b|x^{1+\gamma-\frac1{\kappa+4}},|a|x^{-\frac1{\kappa+4}}\}\right)^2\\
&\geq&\left((|b|x^{1+\gamma-\frac1{\kappa+4}})^\frac1{\kappa+1}(|a|x^{-\frac1{\kappa+4}})^{\frac\kappa{\kappa+1}}\right)^2\\
&\geq&\left((|b||a|^\kappa x^{1+\gamma-\frac{\kappa+1}{\kappa+4}})^\frac1{\kappa+1}\right)^2\\
&\geq&\nu^\frac2{\kappa+1}.
\end{eqnarray*}
This implies that $f(x)$ is $(C_2,\alpha_2;\rho,\epsilon_0)$-good for any $C_2,\alpha_2>0$ and $\epsilon_0=\frac12\nu^{\frac2{\kappa+1}}$.\\

\noindent Case 2: $ab>0$. Without loss of generality, we assume that $a>0,b>0$. Now let $I=(x_1,x_2)\subset[1,\infty)$ be an interval $(x_1,x_2)$ where $f(x_1)=\rho$. Since $f(x_1)=\rho$, we know that $$\text{either\, }(bx_1^{\frac34+\gamma}-ax_1^{-\frac14})^2(x_1^{\frac14-\frac1{\kappa+4}})^2\geq\frac{\rho}2\text{\, or\, }(bx_1^{\frac14})^2(x_1^{\frac14-\frac1{\kappa+4}})^2\geq\frac{\rho}2.$$ If $(bx_1^{\frac14})^2(x_1^{\frac14-\frac1{\kappa+4}})^2\geq\rho/2$, then there is nothing to prove because $f(x)\geq\rho/2$ for all $x\geq x_1$. 

Otherwise, we have $$(bx_1^{\frac34+\gamma}-ax_1^{-\frac14})^2(x_1^{\frac14-\frac1{\kappa+4}})^2\geq\frac{\rho}2.$$ Note that $$\{x\in I| |f(x)|\leq\epsilon\}\subseteq\{x\in I|(bx^{\frac34+\gamma}-ax^{-\frac14})^2(x^{\frac14-\frac1{\kappa+4}})^2\leq\epsilon\}.$$ Therefore, to finish the proof of the lemma, it suffices to show that there exist $C,\epsilon_0>0$ depending only on $\rho,\kappa,\mu,\nu,\gamma$ such that for any $0<\epsilon<\epsilon_0$ we have
\begin{eqnarray}\label{eq1}
\frac1{x_2-x_1}m(\{x\in(x_1,x_2)\big| |g(x)|\leq\sqrt\epsilon\})\leq C\left(\frac\epsilon{\rho}\right)^\frac12
\end{eqnarray}
where $g(x)=(bx^{\frac34+\gamma}-ax^{-\frac14})x^{\frac14-\frac1{\kappa+4}}=bx^{1+\gamma-\frac1{\kappa+4}}-ax^{-\frac1{\kappa+4}}$. Note that $g(x)$ is increasing and $|g(x_1)|\geq\sqrt{\rho/2}$. Without loss of generality, we may assume that $|g(x_1)|=\sqrt{\rho/2}$. If $g(x_1)=\sqrt{\rho/2}$, since $g(x)$ is increasing, the $(C,\alpha;\rho,\epsilon_0)$-good property automatically holds in this case with $\epsilon_0=\frac12\sqrt{\rho/2}$. Therefore we assume that $g(x_1)=-\sqrt{\rho/2}$. In this case, we will prove that the inequality (\ref{eq1}) holds with $\epsilon_0=\frac12\sqrt{\rho/2}$. 

Let $0<\epsilon<\epsilon_0$. Since $g(x)$ is increasing with $g(x_1)=-\sqrt{\rho/2}$, if we fix $x_1$ and let $x_2$ vary as a variable, then the ratio $$\frac1{x_2-x_1}m(\{x\in(x_1,x_2)\big| |g(x)|\leq\sqrt\epsilon\})$$ would attain its maximum when $g(x_2)=\sqrt\epsilon$. So we will assume that $g(x_2)=\sqrt\epsilon$. To compute this maximal ratio, let $z\in (x_1,x_2)$ such that $g(z)=-\sqrt\epsilon$, and then by the mean value theorem we obtain
\begin{eqnarray}\label{eq}\nonumber
&{}&\frac1{x_2-x_1}m(\{x\in(x_1,x_2)\big| |g(x)|\leq\sqrt\epsilon\})\\ \nonumber
&=&\frac{x_2-z}{x_2-x_1}=\frac{g'(\xi_2)}{g'(\xi_1)}\cdot\frac{g(x_2)-g(z)}{g(x_2)-g(x_1)}\\
&=&\frac{g'(\xi_2)}{g'(\xi_1)}\cdot\frac{2\sqrt\epsilon}{\sqrt\epsilon+\sqrt{\rho/2}}
\end{eqnarray}
where $\xi_1$ is between $x_2$ and $z$, $\xi_2$ is between $x_1$ and $x_2$.

Let $x_3\in[1,\infty)$ such that $g(x_3)=\sqrt{\rho/2}$. Then $(x_1,x_2)\subset(x_1,x_3)$. According to equation (\ref{eq}), to prove formula (\ref{eq1}), it suffices to prove that for any $x,y\in (x_1,x_3)$ the ratio $$\frac{g'(x)}{g'(y)}$$ is bounded above by constants depending only on $\rho,\kappa,\mu,\nu$ and $\gamma$. Observe that $$g'(x)=\left(1+\gamma-\frac1{\kappa+4}\right)bx^{\gamma-\frac1{\kappa+4}}+\frac{a}{\kappa+4}x^{-\frac{\kappa+5}{\kappa+4}}$$ is decreasing since $\gamma<\frac1{\kappa+4}$. Therefore, to get an upper bound for $g'(x)/g'(y)$ $(x,y\in(x_1,x_3))$, we only need to estimate $g'(x_1)/g'(x_3)$. By the condition that $g(x_1)=-\sqrt{\rho/2}$ and $g(x_3)=\sqrt{\rho/2}$, we have
\begin{eqnarray*}
\frac{g'(x_1)}{g'(x_3)}&=&\frac{\left(1+\gamma-\frac1{\kappa+4}\right)bx_1^{\gamma-\frac1{\kappa+4}}+\frac{a}{\kappa+4}x_1^{-\frac{\kappa+5}{\kappa+4}}}{\left(1+\gamma-\frac1{\kappa+4}\right)bx_3^{\gamma-\frac1{\kappa+4}}+\frac{a}{\kappa+4}x_3^{-\frac{\kappa+5}{\kappa+4}}}\\
&=&\frac{\left(1+\gamma-\frac1{\kappa+4}\right)(ax_1^{-\frac1{\kappa+4}}-\sqrt{\rho/2})/x_1+\frac{a}{\kappa+4}x_1^{-\frac{\kappa+5}{\kappa+4}}}{\left(1+\gamma-\frac1{\kappa+4}\right)(ax_3^{-\frac1{\kappa+4}}+\sqrt{\rho/2})/x_3+\frac{a}{\kappa+4}x_3^{-\frac{\kappa+5}{\kappa+4}}}\\
&\leq&\frac{\left(1+\gamma-\frac1{\kappa+4}\right)ax_1^{-\frac1{\kappa+4}}/x_1+\frac{a}{\kappa+4}x_1^{-\frac{\kappa+5}{\kappa+4}}}{\left(1+\gamma-\frac1{\kappa+4}\right)ax_3^{-\frac1{\kappa+4}}/x_3+\frac{a}{\kappa+4}x_3^{-\frac{\kappa+5}{\kappa+4}}}=\left(\frac{x_3}{x_1}\right)^{\frac{\kappa+5}{\kappa+4}}.
\end{eqnarray*}
Now let $x_0\in(x_1,x_3)$ such that $g(x_0)=0$. ($x_0=(a/b)^{\frac1{1+\gamma}}$ by solving the equation $g(x)=0$). We set $x_1=\delta_1x_0$ and $x_3=\delta_2x_0$ for some $\delta_1,\delta_2$. Then $\delta_1,\delta_2$ satisfy the following equation 
$$|b(x_0\delta)^{1+\gamma-\frac1{\kappa+4}}-a(x_0\delta)^{-\frac1{\kappa+4}}|=\sqrt{\frac\rho2}$$ since $|g(x_1)|=|g(x_3)|=\sqrt{\rho/2}$. By the fact that $bx_0^{1+\gamma-\frac1{\kappa+4}}=ax_0^{-\frac1{\kappa+4}}$ and $ba^\kappa\geq\nu$, this equation becomes $$|ax_0^{-\frac1{\kappa+4}}\delta^{1+\gamma-\frac1{\kappa+4}}-ax_0^{-\frac1{\kappa+4}}\delta^{-\frac1{\kappa+4}}|=\sqrt{\frac\rho2}$$ 
\begin{eqnarray*}
|\delta^{1+\gamma-\frac1{\kappa+4}}-\delta^{-\frac1{\kappa+4}}|&=&\sqrt{\frac\rho2}\frac{x_0^\frac1{\kappa+4}}a=\sqrt{\frac\rho2}\frac{(a^{\kappa+1}/ba^\kappa)^{\frac1{(1+\gamma)(\kappa+4)}}}a\\
&\leq&\sqrt{\frac\rho2}\frac1{\nu^{\frac1{(1+\gamma)(\kappa+4)}}}\frac{a^{\frac{\kappa+1}{(1+\gamma)(\kappa+4)}}}a.
\end{eqnarray*}
Here $\frac{\kappa+1}{(1+\gamma)(\kappa+4)}<1$. Since $b\leq\mu$ and hence $a\geq\sqrt[\kappa]{\nu/\mu}$, the above inequality becomes 
$$|\delta^{1+\gamma-\frac1{\kappa+4}}-\delta^{-\frac1{\kappa+4}}|\leq\sqrt{\frac\rho2}\frac1{\nu^{\frac1{(1+\gamma)(\kappa+4)}}}\left(\sqrt[\kappa]{\frac\nu\mu}\right)^{\frac{\kappa+1}{(1+\gamma)(\kappa+4)}-1}$$
which holds for $\delta=\delta_1$ and $\delta_2$. This shows that $\delta_1,\delta_2$ are bounded above and below by constants depending only on $\rho,\kappa,\mu,\nu$ and $\gamma$. Therefore $$\frac{g'(x_1)}{g'(x_3)}\leq \left(\frac{x_3}{x_1}\right)^{(\kappa+5)/(\kappa+4)}=\left(\frac{\delta_2}{\delta_1}\right)^{(\kappa+5)/(\kappa+4)}$$ is bounded above by a constant depending only on $\rho,\kappa,\mu,\nu$ and $\gamma$. This completes the proof of the lemma.\\
\end{proof}

For the rest of this section, we turn to the dynamics on $\Gamma\backslash G$. For later use, we give the following definition.

\begin{definition} \label{def:S}
For any $\delta>0$ and any $j\in\{1,2,\dots,k\}$, we define the subset of $\Gamma\backslash G$ $$S_{j,\delta}:=\{q\in\Gamma\backslash G\big|\|q\|_j\leq\delta\}.$$ Moreover, we define $$S_\delta:=\bigcup\limits_jS_{j,\delta}=\{q\in\Gamma\backslash G\big|d(q)\leq\delta\}.$$
\end{definition}

\begin{lemma}\label{l42}
Let $p\in\Gamma\backslash G$ be Diophantine of type $(\kappa_1,\kappa_2,\dots,\kappa_k)$. We fix $j\in\{1,2,\dots,k\}$ and let $0<\gamma<1/(\kappa_j+4)$. Then for sufficiently small $\epsilon>0$ and $T\geq1$, we have $$\frac1T m\left(\left\{x\in[1,T]\Big|p\left(\begin{array}{cc}x^{\frac14} & x^{\frac34+\gamma} \\0 & x^{-\frac14}\end{array}\right)\in S_{j,\epsilon x^{-\frac14+\frac1{\kappa_j+4}}}\right\}\right)\leq C\epsilon$$ where $C$ is a constant only depending on $p$ and $\gamma$.
\end{lemma}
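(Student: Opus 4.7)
The plan is to reduce the set under consideration to a disjoint union of sublevel sets of the functions studied in Lemma \ref{l41}, and then to combine the individual $(C,\frac12;\rho,\epsilon_0)$-good estimates using the almost-uniqueness of short vectors from Lemma \ref{l31}.

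First I would use the equivariance $m_j(\pi_j^{-1}(p\cdot g))=g^{-1}m_j(\pi_j^{-1}(p))$ (immediate from the definition of $m_j$) and compute that for $v=\binom{a}{b}\in m_j(\pi_j^{-1}(p))$ one has $\|h(x)^{-1}v\|^2=(bx^{3/4+\gamma}-ax^{-1/4})^2+(bx^{1/4})^2$, where $h(x)$ denotes the matrix appearing in the statement. Multiplying by $x^{1/2-2/(\kappa_j+4)}$ produces exactly the function $f_v$ of Lemma \ref{l41} with $\kappa=\kappa_j$, so $p\cdot h(x)\in S_{j,\epsilon x^{-1/4+1/(\kappa_j+4)}}$ is equivalent to $f_v(x)\le\epsilon^2$ for some $v\in m_j(\pi_j^{-1}(p))$. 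Writing $E_v:=\{x\in[1,T]:f_v(x)\le\epsilon^2\}$, the set in the statement equals $\bigcup_v E_v$.

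Next I would fix a single $\rho>0$ depending only on $p,\gamma$ (through $\mu_j,\nu_j,\kappa_j,d_j$) such that (a) $\sqrt\rho<d_j$ with $d_j$ from Lemma \ref{l31}, and (b) $f_v(1)>\rho$ for every $v\in m_j(\pi_j^{-1}(p))$. For (b), $f_v(1)=(b-a)^2+b^2\le\rho$ would force $|b|\le\sqrt\rho$ and $|a|\le 2\sqrt\rho$, hence $|a|^{\kappa_j}|b|\le 2^{\kappa_j}\rho^{(\kappa_j+1)/2}$; as soon as $\rho<\min(\mu_j^2,(\nu_j 2^{-\kappa_j})^{2/(\kappa_j+1)})$, this contradicts the Diophantine inequality. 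I also take $\epsilon^2<\min(\rho,\epsilon_0)$ with $\epsilon_0$ from Lemma \ref{l41}.

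For every $v$ with $E_v\neq\emptyset$ set $I_v:=\{x\in[1,\infty):f_v(x)\le\rho\}$. Since $b\neq 0$ (otherwise the Diophantine condition fails), $f_v\to\infty$ at infinity, so $I_v=[y_1^v,y_2^v]$ with $y_1^v>1$, $f_v(y_1^v)=f_v(y_2^v)=\rho$, and $E_v\subset I_v$. The $I_v$ are pairwise disjoint, because a common point $x\in I_v\cap I_{v'}$ would make both $\|h(x)^{-1}v\|$ and $\|h(x)^{-1}v'\|$ at most $\sqrt\rho\,x^{-1/4+1/(\kappa_j+4)}\le\sqrt\rho<d_j$, contradicting Lemma \ref{l31} applied to $p\cdot h(x)$. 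Lemma \ref{l41} now gives constants $C,\epsilon_0$ (depending only on $p,\gamma$) so that each $f_v$ is $(C,\frac12;\rho,\epsilon_0)$-good on $[1,\infty)$. I split the relevant vectors into those with $y_2^v\le T$ and at most one exceptional $v^*$ with $y_2^{v^*}>T$ (uniqueness from Lemma \ref{l31} applied at $x=T$). For non-exceptional $v$ the good property on $(y_1^v,y_2^v)$ yields $m(E_v)\le C\rho^{-1/2}\epsilon\,m(I_v)$, and disjointness of the $I_v\subset[1,T]$ gives $\sum_{v\neq v^*}m(E_v)\le C\rho^{-1/2}\epsilon\,T$. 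For $v^*$, the good property on $(y_1^{v^*},T)\subset[1,\infty)$ gives $m(E_{v^*})\le C\rho^{-1/2}\epsilon(T-y_1^{v^*})\le C\rho^{-1/2}\epsilon\,T$. Adding these bounds and dividing by $T$ gives the conclusion.

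The delicate point is item (b) in the choice of $\rho$: ruling out the degenerate case $f_v(1)\le\rho$ is precisely where the Diophantine hypothesis on $p$ is truly used, and this is the source of the dependence of $C$ on $p$. The other subtlety is the exceptional vector $v^*$ whose sublevel interval extends past $T$; its uniqueness, again from Lemma \ref{l31}, lets the good property be applied on the truncation $(y_1^{v^*},T)$ without losing anything.
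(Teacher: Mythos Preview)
Your argument follows the same strategy as the paper: translate the condition into sublevel sets of the functions $f_v$ from Lemma~\ref{l41}, use Lemma~\ref{l31} to make the relevant intervals disjoint, and sum the $(C,\tfrac12;\rho,\epsilon_0)$-good estimates. The choice of $\rho$ via the Diophantine inequalities matches the paper's choice $\rho=\min\{d(pu(1)),d_1,\dots,d_k\}$ (your $\rho$ is the square of the paper's).

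There is one step that is asserted without justification: you claim that $I_v=\{x\in[1,\infty):f_v(x)\le\rho\}$ is a single interval, citing only $f_v\to\infty$ and $f_v(1)>\rho$. Those facts give boundedness of $I_v$ but not connectedness. In fact connectedness does hold (one can check that on $(1,x_0)$, where $g=bx^{1+\gamma-1/(\kappa_j+4)}-ax^{-1/(\kappa_j+4)}$ is negative, $f_v''=2(g')^2+2gg''+c(c-1)b^2x^{c-2}>0$ because the $(g')^2$ term dominates the negative concavity term; on $[x_0,\infty)$, $f_v$ is increasing), but this requires an argument you did not supply. The paper sidesteps the issue entirely: it lets $I_{(a,b)}^l$ ($l=1,2,\dots$) be the maximal connected subintervals of $[1,T]$ on which the norm condition holds. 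These are still pairwise disjoint across all $(a,b)$ and $l$ by Lemma~\ref{l31}, each has left endpoint with $f$-value equal to $\rho^2$ (by maximality and since $f_v(1)\ge\rho^2$), and they already sit inside $[1,T]$, so the good-property bounds sum directly to $\le C\epsilon\,T$. This also makes your separate treatment of the exceptional $v^*$ unnecessary.
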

\begin{proof}
We will use the notations, the maps $m_j$ and $\pi_j$ in section 3. Then the image of $\pi_j^{-1}\left(p\left(\begin{array}{cc}x^{\frac14} & x^{\frac34+\gamma} \\0 & x^{-\frac14}\end{array}\right)\right)$ under $m_j$ is equal to $$\left(\begin{array}{cc}x^{\frac14} & x^{\frac34+\gamma} \\0 & x^{-\frac14}\end{array}\right)^{-1}m_j(\pi_j^{-1}(p))=\left\{\left(\begin{array}{cc}x^{-\frac14} & -x^{\frac34+\gamma} \\0 & x^{\frac14}\end{array}\right)\left(\begin{array}{c}a \\ b\end{array}\right)\right\}=\left\{\left(\begin{array}{c}ax^{-\frac14}-bx^{\frac34+\gamma} \\bx^{\frac14}\end{array}\right)\right\}$$ where $\left(\begin{array}{c}a \\b\end{array}\right)$ runs over all points in $m_j(\pi_j^{-1}(p))$. By definition, what we need to prove is equivalent to the following 
\begin{eqnarray*}
m\left(\left\{x\in[1,T]\Big|\left\|p\left(\begin{array}{cc}x^{\frac14} & x^{\frac34+\gamma} \\0 & x^{-\frac14}\end{array}\right)\right\|_j\leq\epsilon x^{-\frac14+\frac1{\kappa_j+4}}\right\}\right)\leq C\epsilon T.
\end{eqnarray*}
 which is equivalent to the following 
\begin{eqnarray*}
&{}&m\left(\left\{x\in[1,T]\Big|\exists\text{ a point in }m_j\left(\pi_j^{-1}\left(p\left(\begin{array}{cc}x^{\frac14} & x^{\frac34+\gamma} \\0 & x^{-\frac14}\end{array}\right)\right)\right)\text{with length}\leq\epsilon x^{-\frac14+\frac1{\kappa_j+4}}\right\}\right)\\
&\leq& C\epsilon T.
\end{eqnarray*}
Let $\rho=\min\left\{d\left(p\left(\begin{array}{cc}1 & 1 \\0 & 1\end{array}\right)\right),d_1,d_2,\dots,d_k\right\}$, where $d_{j}$'s are as in Remark~\ref{rem:dj}. 
 We denote by $P$ the subset $m_j(\pi_j^{-1}(p))$. For $(a,b)\in P$, let $I_{(a,b)}^l (l=1,2,\dots)$ be all the maximal connected subintervals in $[1,T]$ such that for any $x\in I_{(a,b)}^l$ the point of $\pi_j^{-1}\left(p\left(\begin{array}{cc}x^{\frac14} & x^{\frac34+\gamma} \\0 & x^{-\frac14}\end{array}\right)\right)$ corresponding to $(a,b)$; that is $$\left(\begin{array}{cc}x^{\frac14} & x^{\frac34+\gamma} \\0 & x^{-\frac14}\end{array}\right)^{-1}\left(\begin{array}{c}a \\ b\end{array}\right)=\left(\begin{array}{c}ax^{-\frac14}-bx^{\frac34+\gamma} \\bx^{\frac14}\end{array}\right)$$ has norm $\leq\rho x^{-\frac14+\frac1{\kappa_j+4}}$. Since $x\geq1$, Lemma \ref{l31} implies that all the intervals $$\{I_{(a,b)}^l|(a,b)\in P,\;l=1,2,\dots\}$$ are pairwise disjoint. Therefore, we have
\begin{eqnarray*}
&{}&m\left(\left\{x\in[1,T]\Big|\exists\text{a point in }m_j\left(\pi_j^{-1}\left(p\left(\begin{array}{cc}x^{\frac14} & x^{\frac34+\gamma} \\0 & x^{-\frac14}\end{array}\right)\right)\right)\text{with length}\leq\epsilon x^{-\frac14+\frac1{\kappa_j+4}}\right\}\right)\\
&=&\sum\limits_{(a,b)\in P}\sum\limits_lm\left(x\in I_{(a,b)}^l\Big|\left\|\left(\begin{array}{c}ax^{-\frac14}-bx^{\frac34+\gamma} \\bx^{\frac14}\end{array}\right)\right\|\leq\epsilon x^{-\frac14+\frac1{\kappa_j+4}}\right)
\end{eqnarray*}
Because of this, to prove the lemma, it suffices to prove the following $$m\left(\left\{x\in I^l_{(a,b)}\Big|\left\|\left(\begin{array}{c}ax^{-\frac14}-bx^{\frac34+\gamma} \\bx^{\frac14}\end{array}\right)\right\|\leq\epsilon x^{-\frac14+\frac1{\kappa_j+4}}\right\}\right)\leq C\epsilon m(I^l_{(a,b)}),$$  for some $C,\epsilon_0$ with $0<\epsilon<\epsilon_0$, or to prove that the function $$f(x)=(bx^{\frac34+\gamma}-ax^{-\frac14})^2(x^{\frac14-\frac1{\kappa_j+4}})^2+(bx^{\frac14})^2(x^{\frac14-\frac1{\kappa_j+4}})^2$$ has $(C',1/2;\rho^2,\epsilon_0^2)$-good property for some $C'=C\rho$. This follows immediately from Lemma \ref{l41}.\\
\end{proof}

To conclude this section, we give the following proposition, which is crucial in our proof of the main theorem. It is the discrete version of Lemma \ref{l42}.

\begin{proposition}\label{p41}
Let $p\in\Gamma\backslash G$ be Diophantine of type $(\kappa_1,\kappa_2,\dots,\kappa_k)$. Let $0<\gamma<\min\{1/(\kappa_j+4):j=1,2,\dots,k\}$. Then there exists a constant $C_0>0$ depending only on $p$ and $\gamma$ such that for sufficiently small $\epsilon>0$ and any $N\in\mathbb N$, $$\frac1N \left|\left\{n\in[1,N]\cap\mathbb N\Big|p\left(\begin{array}{cc}n^{\frac14} & n^{\frac34+\gamma} \\0 & n^{-\frac14}\end{array}\right)\in S_{\theta(n,\epsilon)}\right\}\right|\leq C_0\epsilon$$ where 
\begin{equation} \label{eq:theta}
\theta(x,\epsilon)=\epsilon \min\{x^{-\frac14+\frac1{\kappa_j+4}}|j=1,2,\dots,k\}.
\end{equation}
\end{proposition}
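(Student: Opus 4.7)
The plan is to reduce this discrete estimate to the continuous estimate of Lemma~\ref{l42} by combining (i) a union bound over the finitely many cusps, and (ii) a perturbation argument that inflates each bad integer to a unit interval inside a slightly larger continuous bad set. Writing $g(x) := \bigl(\begin{smallmatrix} x^{1/4} & x^{3/4+\gamma} \\ 0 & x^{-1/4} \end{smallmatrix}\bigr)$ and using $S_\delta = \bigcup_{j=1}^k S_{j,\delta}$ together with the elementary inequality $\theta(n,\epsilon) \leq \epsilon\, n^{-1/4+1/(\kappa_j+4)}$ valid for every $j$ (immediate from (\ref{eq:theta})), the set in question is contained in $\bigcup_{j=1}^k D_j(\epsilon)$, where
$$D_j(\epsilon) := \bigl\{n \in [1,N] \cap \mathbb{N} : pg(n) \in S_{j,\, \epsilon n^{-1/4+1/(\kappa_j+4)}}\bigr\}.$$
It therefore suffices to prove $|D_j(\epsilon)| \leq C_j \epsilon N$ for each fixed $j$, with $C_j$ depending only on $p$ and $\gamma$; the proposition then follows by summing over $j = 1,\dots,k$.

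For the perturbation step, fix $j$ and let $n \in D_j(\epsilon)$. Pick a witness $(a,b) \in m_j(\pi_j^{-1}(p))$ realising the smallness, so that
$$(bn^{3/4+\gamma} - an^{-1/4})^2 + b^2 n^{1/2} \leq \epsilon^2\, n^{-1/2+2/(\kappa_j+4)}.$$
For any $x \in [n, n+1]$ set $r := x/n \in [1, 1+1/n]$, and exploit the identity
$$bx^{3/4+\gamma} - ax^{-1/4} = r^{-1/4}\bigl(bn^{3/4+\gamma} - an^{-1/4}\bigr) + r^{-1/4}(r^{1+\gamma}-1)\, bn^{3/4+\gamma},$$
together with $r^{1+\gamma} - 1 = O(1/n)$ and the bound $|b| \leq \epsilon n^{-1/2+1/(\kappa_j+4)}$ extracted from the witness. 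A direct computation then produces
$$(bx^{3/4+\gamma} - ax^{-1/4})^2 + b^2 x^{1/2} \leq (C'\epsilon)^2\, x^{-1/2+2/(\kappa_j+4)}$$
for some $C' = C'(\gamma)$; the decisive inequality is $-3/4+\gamma+1/(\kappa_j+4) \leq -1/4+1/(\kappa_j+4)$, i.e.\ $\gamma \leq 1/2$, which holds since $\gamma < 1/(\kappa_j+4) \leq 1/5$. Hence the entire unit interval $[n, n+1]$ is contained in the continuous bad set
$$B_j(C'\epsilon) := \bigl\{x \in [1, N+1] : pg(x) \in S_{j,\, C'\epsilon\, x^{-1/4+1/(\kappa_j+4)}}\bigr\}.$$

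For the final step, since the intervals $\{[n,n+1]\}_{n \in D_j(\epsilon)}$ have pairwise disjoint interiors, $|D_j(\epsilon)| \leq m(B_j(C'\epsilon))$. Provided $\epsilon$ is small enough that $C'\epsilon$ lies in the range permitted by Lemma~\ref{l42} (applied with $T = N+1$ and threshold $C'\epsilon$), that lemma gives $m(B_j(C'\epsilon)) \leq C\, C'\epsilon\,(N+1) \leq 2CC'\epsilon N$; summing over $j$ produces the proposition. The main obstacle is the perturbation step of the middle paragraph: the witness vector $(a,b)$ achieves smallness of $pg(n)$ only through a delicate near-cancellation of $an^{-1/4}$ against $bn^{3/4+\gamma}$ (both of which are typically far larger than the bound), and one must verify that this cancellation survives the displacement $n \mapsto x \in [n, n+1]$ with only a constant-factor loss. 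The hypothesis $\gamma < 1/(\kappa_j+4)$ is precisely what guarantees this survival, in the same spirit that drove the $(C, 1/2; \rho, \epsilon_0)$-good estimate in Lemma~\ref{l41}.
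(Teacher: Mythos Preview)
Your proof is correct and follows essentially the same approach as the paper: reduce to each cusp via the union $S_\delta=\bigcup_j S_{j,\delta}$, inflate each bad integer to a unit interval lying in a continuous bad set with threshold enlarged by a fixed factor, and then invoke Lemma~\ref{l42}. The only stylistic difference is that the paper carries out the perturbation step by computing the matrix $g(n)^{-1}g(n+\delta)$ for $\delta\in(-1,1)$, observing it stays in a fixed compact neighborhood $U$ of the identity, and using the operator norm $L=\max_{g\in U}\|g\|$ to pass from the witness at $n$ to one at $x$; you instead expand the witness vector componentwise. These are equivalent, and your explicit computation makes visible that only $\gamma\leq 1/2$ is needed for the perturbation itself (the full hypothesis $\gamma<1/(\kappa_j+4)$ enters only through Lemma~\ref{l42}), so your closing remark slightly overstates the role of that hypothesis in this step.
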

\begin{proof} By the definition of $S_\delta$, it suffices to prove that there exists a constant $C_0>0$ depending only on $p$ and $\gamma$ such that for each $j$ and any $\epsilon>0$, $$\frac1N \left|\left\{n\in[1,N]\cap\mathbb N\Big|p\left(\begin{array}{cc}n^{\frac14} & n^{\frac34+\gamma} \\0 & n^{-\frac14}\end{array}\right)\in S_{j,\epsilon n^{-\frac14+\frac1{\kappa_j+4}}}\right\}\right|\leq C_0\epsilon.$$

We compute that for any $\delta\in(-1,1)$ and $n\geq1$
\begin{eqnarray*}
&{}&\left(\begin{array}{cc}n^{\frac14} & n^{\frac34+\gamma} \\0 & n^{-\frac14}\end{array}\right)^{-1}\left(\begin{array}{cc}(n+\delta)^{\frac14} & (n+\delta)^{\frac34+\gamma} \\0 & (n+\delta)^{-\frac14}\end{array}\right)\\
&=&\left(\begin{array}{cc}n^{-\frac14} & -n^{\frac34+\gamma} \\0 & n^{\frac14}\end{array}\right)\left(\begin{array}{cc}(n+\delta)^{\frac14} & (n+\delta)^{\frac34+\gamma} \\0 & (n+\delta)^{-\frac14}\end{array}\right)\\
&=&\left(\begin{array}{cc}(1+\delta/n)^{\frac14} & n^{-\frac14}(n+\delta)^{\frac34+\gamma}-n^{\frac34+\gamma}(n+\delta)^{-\frac14} \\0 & (1+\delta/n)^{-\frac14}\end{array}\right)\\
&=&\left(\begin{array}{cc}(1+\delta/n)^{\frac14} & ((n+\delta)^{1+\gamma}-n^{1+\gamma})(n(n+\delta))^{-\frac14} \\0 & (1+\delta/n)^{-\frac14}\end{array}\right)
\end{eqnarray*}
which lies in a compact neighborhood $U$ of identity in $\SL(2,\mathbb R)$. Let $$L=\max\{\|g\||g\in U\}$$ where $\|g\|$ denotes the operator norm of $g$ on $\mathbb R^2$. Then by the computations above, we know that
\begin{eqnarray*}
&{}&\frac1N \left|\left\{n\in[1,N]\cap\mathbb N\Big|p\left(\begin{array}{cc}n^{\frac14} & n^{\frac34+\gamma} \\0 & n^{-\frac14}\end{array}\right)\in S_{j,\epsilon n^{-\frac14+\frac1{\kappa_j+4}}}\right\}\right|\\
&\leq&\frac1N m\left(\left\{x\in[1,N]\Big|p\left(\begin{array}{cc}x^{\frac14} & x^{\frac34+\gamma} \\0 & x^{-\frac14}\end{array}\right)\in S_{j,L\epsilon x^{-\frac14+\frac1{\kappa_j+4}}}\right\}\right).
\end{eqnarray*}
Now the proposition follows immediately from Lemma \ref{l42}.\\
\end{proof}

\section{Calculations}
In this section, we shall apply the technique of Venkatesh to obtain some effective results about averaging over arithmetic progressions. It is very similar to \cite{V}, where Venkatesh proved the sparse equidistribution theorem for $\Gamma$ being cocompact. Since in our setting $\Gamma$ is non-uniform, and for the sake of self-containedness, we include the details of the calculations in this section. We will follow the notations in \cite{V}. Throughout this section, we fix an arbitrary point $q\in\Gamma\backslash G$. For a character $\psi:\mathbb R\to S^1$, we define $$\mu_{T,\psi}(f)=\frac1T\int_0^T\psi(t)f(qu(t))dt$$ for $f$ on $\Gamma\backslash G$.

\begin{lemma}[Cf.~{\cite[Lemma 3.1]{V}}] \label{l51}
There exists a constant $\beta>0$ which only depends on $\Gamma$ such that for any $f\in C^\infty(\Gamma\backslash G)$ satisfying $\|f\|_{\infty,4}<\infty$ and $\int_{\Gamma\backslash G}fd\mu=0$, any character $\psi:\mathbb R\to S^1$, any $T\geq1$ and any $q\in\Gamma\backslash G$ satisfying
\begin{equation}\label{eq:r}
r=r(q,T)=T\cdot e^{-\dist(g_{\log T}(q))}\geq1,
\end{equation}
we have $$|\mu_{T,\psi}(f)|\ll r^{-\beta}\ln^3(r+2)\|f\|_{\infty,4}$$
 and the implicit constant is independent of $\psi$.
\end{lemma}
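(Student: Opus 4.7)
The case $\psi\equiv 1$ is already covered by Theorem~\ref{thm:S} (with $\beta=s$), so we may assume $\psi(t)=e^{i\lambda t}$ with $\lambda\ne 0$. The strategy is to remove the oscillatory factor $\psi$ by an autocorrelation / van der Corput argument. The crucial point is that in the product $\psi(t+h)\overline{\psi(t)}=\psi(h)$ the dependence on $t$ drops out, so once we pass to absolute values the character disappears entirely from the inner integral. This reduces the estimate to controlling orbital averages of the non-oscillatory functions $F_h(x):=f(xu(h))\overline{f(x)}$, which can be handled uniformly in $\psi$ by combining Theorems~\ref{thm:KM} and~\ref{thm:S}.

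Concretely, for a parameter $1\le H\le T$ to be optimized, a Cauchy--Schwarz / van der Corput step yields
\[
|\mu_{T,\psi}(f)|^2\ll\frac{1}{H}\int_{|h|\le H}\left|\frac{1}{T}\int_0^{T-|h|}F_h(qu(t))\,dt\right|dh+O\!\left(\frac{\|f\|_\infty^2}{H}\right).
\]
For each fixed $h$ I apply Theorem~\ref{thm:S} to $F_h$ along the orbit $\{qu(t):t\in[0,T-|h|]\}$, and bound the resulting mean $\int F_h\,d\mu=(u(h)\cdot f,\overline f)$ by exponential mixing (Theorem~\ref{thm:KM}):
\[
\left|\int F_h\,d\mu\right|\ll(1+|h|)^{-\kappa}\|f\|_{\infty,1}^{2},\qquad\|F_h\|_{\infty,4}\ll(1+|h|)^{4}\|f\|_{\infty,4}^{2}.
\]
The Sobolev estimate comes from a short computation with the adjoint action: Lie-differentiating $x\mapsto f(xu(h))$ in a fixed basis direction of $\mathfrak{sl}_2$ produces polynomial-in-$|h|$ factors from $\operatorname{Ad}(u(h))$. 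For $|h|\le H\le T/2$ the parameter $r_h:=r(q,T-|h|)$ stays comparable to $r$, so Theorem~\ref{thm:S} applied to $F_h$ contributes an error $\ll r^{-s}\ln^3(r+2)(1+|h|)^{4}\|f\|_{\infty,4}^{2}$.

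Combining the mixing and equidistribution bounds and integrating in $h$ yields
\[
|\mu_{T,\psi}(f)|^{2}\ll\bigl(H^{-\min(1,\kappa)}+H^{4}r^{-s}\ln^{3}(r+2)\bigr)\|f\|_{\infty,4}^{2},
\]
where the term $H^{-\min(1,\kappa)}$ also absorbs the boundary contribution $\|f\|_\infty^{2}/H$. Choosing $H\asymp r^{s/(4+\kappa)}$ to balance the two terms then gives the claimed bound for some $\beta>0$ depending only on $s$ and $\kappa$, and hence only on $\Gamma$; a concrete choice is $\beta=\tfrac{s\min(1,\kappa)}{2(4+\kappa)}$. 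The main technical obstacles I expect are (i) establishing the polynomial-in-$|h|$ Sobolev bound $\|F_h\|_{\infty,4}\ll(1+|h|)^{4}\|f\|_{\infty,4}^{2}$ with absolute implied constants, and (ii) verifying that $r_h\sim r$ uniformly for $|h|\le H$, which follows from $H=r^{s/(4+\kappa)}\le T$ and the explicit definition of $r$. Once these are in hand, the Cauchy--Schwarz boundary contribution and the log bookkeeping are immediate.
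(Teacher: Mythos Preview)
Your approach is essentially the same as the paper's: both introduce a smoothing parameter $H$, apply Cauchy--Schwarz to kill the character, and then feed the resulting (non-oscillatory) autocorrelations into Str\"ombergsson's theorem together with exponential mixing. The paper phrases the smoothing step via the operator $\sigma_H(f)(x)=\frac1H\int_0^H\psi(s)f(xu(s))\,ds$ and, after Cauchy--Schwarz, keeps a double integral over $(y,z)\in[0,H]^2$ of $\frac1T\int_0^T\overline{f^y}f^z(qu(t))\,dt$; it then bounds $\|\overline{f^y}f^z\|_{\infty,4}\ll y^4z^4\|f\|_{\infty,4}^2$, leading to a factor $H^8$ and hence $\beta=\frac{s\kappa}{2(8+\kappa)}$ after optimizing $H=r^{s/(8+\kappa)}$.

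Your reduction to the single-variable correlation $F_h(x)=f(xu(h))\overline{f(x)}$ is the same argument after the substitution $t\mapsto t+z$, $h=y-z$; it is slightly sharper because $\|F_h\|_{\infty,4}\ll(1+|h|)^4\|f\|_{\infty,4}^2$ only produces $H^4$, giving the better exponent $\beta=\frac{s\kappa}{2(4+\kappa)}$ (note $\kappa=2s-\epsilon<1$, so $\min(1,\kappa)=\kappa$). Two minor points: the genuine ``boundary'' error in van der Corput is $O(H/T\cdot\|f\|_\infty^2)$ rather than $O(\|f\|_\infty^2/H)$, but it is dominated by the main term regardless; and your claim $r_h\sim r$ for $|h|\le H\le T/2$ is justified since $|\log(T-|h|)-\log T|\le\log 2$ and the geodesic flow moves at unit speed, so $\dist(g_{\log(T-|h|)}(q))$ differs from $\dist(g_{\log T}(q))$ by at most $\log 2$.
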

\begin{proof}
The proof is almost the same as that of \cite[Lemma 3.1]{V}  combined with \cite{S}. We define $$\sigma_H(f)(x)=\frac1H\int_0^H\psi(s)f(xu(s))ds.$$ First it is easy to get that $|\mu_{T,\psi}(f)-\mu_{T,\psi}(\sigma_H(f))|\leq\frac HT\|f\|_{\infty,0}\leq\frac Hr\|f\|_{\infty,0}$. Now we estimate $\mu_{T,\psi}(\sigma_H(f))$. By Cauchy-Shwartz inequality, we have
\begin{eqnarray*}
|\mu_{T,\psi}(\sigma_H(f))|&\leq&\frac1T\left(\int_0^T|\psi(t)|^2dt\right)^{\frac12}\left(\int_0^T|\sigma_H(f)(qu(t))|^2dt\right)^{\frac12}\\
&\leq&\left(\frac1T\int_0^T|\sigma_H(f)(qu(t))|^2dt\right)^{\frac12}\\
&=&\left|\frac1{H^2}\int_0^H\int_0^H\psi(z-y)\left(\frac1T\int_0^T\overline{f(qu(t)u(y))}f(qu(t)u(z))dt\right)dydz\right|^\frac12\\
&\leq&\left(\frac1{H^2}\int_0^H\int_0^H\left|\frac1T\int_0^T\overline{f^y}f^z(qu(t))dt\right|dydz\right)^\frac12.
\end{eqnarray*}
Here $f^y$ and $f^z$ denote the right translation of $f$ by $u(y)$ and the right translation of $f$ by $u(z)$, respectively. Therefore, by Strombergsson's effective equidistribution Theorem \ref{thm:S}, we have 
\begin{eqnarray*}
|\mu_{T,\psi}(\sigma_H(f))|&\leq&\left(\frac1{H^2}\int_0^H\int_0^HO(\|\overline{f^y}f^z\|_{\infty,4})r^{-s}\ln^3(r+2)dydz+\frac1{H^2}\int_0^H\int_0^H\left|(f^{y-z},f)\right|dydz\right)^\frac12.
\end{eqnarray*}
for some $s>0$ depending only on $\Gamma$ (the spectral gap) and $r$ is as in (\ref{eq:r}) (see Theorem \ref{thm:S}). 

By mixing property of unipotent flows (Theorem~\ref{thm:KM}), we know that $$|(f^h,f)|\ll (1+|h|)^{-\kappa}\|f\|_{\infty,1}^2.$$ Also by product rule and chain rule in Calculus (see \cite{V} Lemma 2.2 for details), we know that $$O(\|\overline{f^y}f^z\|_{\infty, 4})\ll O(\|\overline{f^y}\|_{\infty,4}\|f^z\|_{\infty,4})\ll y^4z^4O(\|f\|^2_{\infty,4}).$$ Therefore, combining all the computations above, we obtain
\begin{eqnarray*}
|\mu_{T,\psi}(f)|&\leq&|\mu_{T,\psi}(f)-\mu_{T,\psi}(\sigma_H(f))|+|\mu_{T,\psi}(\sigma_H(f))|\\
&\ll&\frac Hr\|f\|_\infty+(H^8 r^{-s}\ln^3(r+2)+H^{-\kappa})^\frac12\|f\|_{\infty,4}.
\end{eqnarray*}
Let $H=r^{\frac s{8+\kappa}}$ and we get $\beta=s\kappa/2(8+\kappa)$. This completes the proof of the lemma.\\
\end{proof}

We deduce Theorem \ref{thm51} from Lemma~\ref{l51}. It will be crucial in the proof of the main theorem in section 6.

\begin{proof}[Proof of Theorem \ref{thm51}]
The proof is almost the same as that of \cite[Theorem 3.1]{V}. Let $\delta>0$ and $g_\delta(x)=\max\{\delta^{-2}(\delta-|x|),0\}$. Let $$g(x)=\sum\limits_{j\in\mathbb Z}g_\delta(x+Kj).$$ On the one hand, since $g(x)$ has most mass on the points $\{Kj|j\in\mathbb Z\}$, we know that $$\left|\int_0^Tg(t)f(qu(t))dt-\sum\limits_{\substack{j\in\mathbb Z\\0\leq Kj<T}}f(qu(Kj))\right|\leq2\|f\|_\infty+\frac TK\delta\|f\|_{\infty,1}.$$ On the other hand, since $g(x)$ is periodic, we have the Fourier expansion $$g(x)=\sum\limits_{k\in\mathbb Z}a_ke^{2\pi i kx/K}.$$ A simple calculation shows that $$\sum\limits_{k\in\mathbb Z}|a_k|=|g(0)|=\frac1\delta.$$ By Lemma \ref{l51} with characters $\psi_k=e^{2\pi ikx/K}$, we have 
\begin{eqnarray*}
\left|\int_0^Tg(t)f(qu(t))dt\right|&\leq&\sum\limits_{k\in\mathbb Z}|a_k|\left|\int_0^Te^{2\pi i kt/K}f(qu(t))dt\right|\ll\frac{T\ln^3(r+2)}{\delta r^\beta}\|f\|_{\infty,4}.
\end{eqnarray*}
Combining the calculations above, we have $$\left|\frac1{T/K}\sum\limits_{\substack{j\in\mathbb Z\\0\leq Kj<T}}f(qu(Kj))\right|\ll\left(\frac KT+\delta+\frac {K\ln^3(r+2)}{\delta r^\beta}\right)\|f\|_{\infty,4}.$$ Note that $K<T$, $r\leq T$ and $\beta<1$. Let $\delta=\sqrt{\frac{K\ln^3(r+2)}{r^\beta}}$ and we complete the proof of the theorem.\\
\end{proof}

\section{Proof of the Main Theorem}
\begin{proof}[Proof of the main theorem]
By a standard approximation argument, we may assume that $f\in C^\infty(\Gamma\backslash G)$ with $\|f\|_{\infty,4}<\infty$ and $$\int_{\Gamma\backslash G}fd\mu=0.$$ We want to find $\gamma_0>0$ depending on $\kappa_1,\dots,\kappa_k$ such that for any $0<\gamma<\gamma_0$, the main theorem holds. Note that by Taylor expansion, for any $M\in\mathbb N$ and $k\in\mathbb N,$ $$(M+k)^{1+\gamma}=M^{1+\gamma}+(1+\gamma)M^\gamma k+O(M^{\gamma-1}k^2).$$ Therefore, if $M$ is sufficiently large and $\gamma<1/2$, then the sequence $$\left\{(M+k)^{1+\gamma}\Big|\;0\leq k\leq\frac1{1+\gamma}M^{\frac12-\gamma}\right\}$$ is approximately equal to the arithmetic progression $$\left\{M^{1+\gamma}+(1+\gamma)M^\gamma k\Big|\;0\leq k\leq\frac1{1+\gamma}M^{\frac12-\gamma}\right\}$$ since $$O(M^{\gamma-1}k^2)\leq O(M^{\gamma-1}(M^{\frac12-\gamma})^2)=O(M^{-\gamma})\to0$$ as $M\to\infty$.\\

By Proposition \ref{p41}, we know that for any $\epsilon>0$ and any $N>0$, 
\begin{eqnarray*}
\frac1N\left|\left\{n\in[1,N]\Big|p\left(\begin{array}{cc}n^{\frac14} & n^{\frac34+\gamma} \\0 & n^{-\frac14}\end{array}\right)\in S_{\theta(n,\epsilon)}\right\}\right|\leq C_0\epsilon,
\end{eqnarray*}
where $\theta(n)=\epsilon \min\{n^{-\frac14+\frac1{\kappa_j+4}}|j=1,2,\dots,k\}$. Set 
\begin{eqnarray*}
 B=\left\{n\in\mathbb N\Big|p\left(\begin{array}{cc}n^{\frac14} & n^{\frac34+\gamma} \\0 & n^{-\frac14}\end{array}\right)\in S_{\theta(n,\epsilon)}^c\right\}.
\end{eqnarray*}
  We proceed as follows. Fix $\epsilon>0$. We pick the first element $M_1\in\mathbb N$ which lies in $B$. Then we take $$P_1=\left\{M_1+k\Bigg|\;0\leq k\leq\frac1{1+\gamma}M_1^{\frac12-\gamma}\right\}.$$ Next we pick the first element $M_2\in\mathbb N$ which appears after $P_1$ and lies in $B$, and we take $$P_2=\left\{M_2+k\Big|\;0\leq k\leq\frac1{1+\gamma}M_2^{\frac12-\gamma}\right\}.$$ Then we pick the first element $M_3\in\mathbb N$ which appears after $P_2$ and lies in $B$, and so on. In this manner, we get pieces $P_1,P_2,\dots$ in $\mathbb N$ and by our choices of $M_1,M_2,\dots$, we know that $$B\subset P_1\cup P_2\cup\dots$$ and hence for any $N>0$
\begin{eqnarray}\label{eq5}
\frac1N\left|[1,N]\setminus(P_1\cup P_2\cup\dots)\right|\leq C_0\epsilon.
\end{eqnarray}

Now we consider each of the pieces $P_i$. From the discussion above, we know that $\{n^{1+\gamma}|n\in P_i\}$ is approximated by the arithmetic progression $$\tilde P_i=\left\{M_i^{1+\gamma}+(1+\gamma)M_i^\gamma k\Big|\;0\leq k\leq\frac1{1+\gamma}M_i^{\frac12-\gamma}\right\}.$$ 

We would like to apply Theorem \ref{thm51} with $T=M_i^{1/2}$, $K=(1+\gamma)M_i^\gamma$ and $q=q_i:=pu(M_i^{1+\gamma})$ for sufficiently large $i$. So first we have to check that $r_i:=r(q_i,M_i^{\frac12})\geq1$ for sufficiently large $i$. We compute that
\begin{eqnarray*}
g_{(\log M_i)/2}(q_i)=p\left(\begin{array}{cc}1 & M_i^{1+\gamma} \\0 & 1\end{array}\right)\left(\begin{array}{cc}M_i^{\frac14} & 0 \\0 & M_i^{-\frac14}\end{array}\right)=p\left(\begin{array}{cc}M_i^\frac14 & M_i^{\frac34+\gamma} \\0 & M_i^{-\frac14}\end{array}\right)\in S_{\theta(M_i,\epsilon)}^c,
\end{eqnarray*}
by our choice of $M_i\in B$. By definition \ref{def:S}  and equation \eqref{eq:theta} , we have $$d(g_{(\log M_i)/2}(q_i))\geq\theta(M_i,\epsilon)=\epsilon \min\{M_i^{-\frac14+\frac1{\kappa_j+4}}|j=1,2,\dots,k\}.$$ By Lemma \ref{l32}, $e^{-\dist(q)}\sim d(q)^2$. 
Hence 
\begin{eqnarray}\label{eq8}
r_i=M_i^{1/2}e^{-\dist(g_{(\log M_i)/2}(q_i))}\gg\epsilon^2\min\{M_i^{2/(\kappa_j+4)}|j=1,2,\dots,k\}.
\end{eqnarray}
This implies that $r_i\to\infty$ as $i\to\infty$, since $M_i\to\infty$ by our choices of $M_i$'s.

By Theorem \ref{thm51} with $T=M_i^{1/2}$, $K=(1+\gamma)M_i^\gamma$, $q_i=pu(M_i^{1+\gamma})$ and $r_i=r(M_i^\frac12,q_i)$, we have
\begin{eqnarray}\label{eq6}
\left|\frac1{|\tilde P_i|}\sum\limits_{n\in\tilde P_i}f(pu(n))\right|&=&\left|\frac1{\lfloor M_i^{1/2}/(1+\gamma)M_i^\gamma\rfloor}\sum\limits_{0\leq (1+\gamma)M_i^\gamma k<M_i^{\frac12}}f(q_iu((1+\gamma)M_i^\gamma k))\right|\nonumber\\
&\ll&\frac {((1+\gamma)M_i^\gamma)^\frac12\ln^\frac32(r_i+2)}{r_i^\frac\beta2}\|f\|_{\infty,4}.
\end{eqnarray}
Since $M_i\to\infty$, according to inequalities (\ref{eq8}) and (\ref{eq6}), as long as $\gamma<\min\{2\beta/(\kappa_j+4)|j=1,2,\dots,k\}$, we have $$\left|\frac1{|\tilde P_i|}\sum\limits_{n\in\tilde P_i}f(pu(n))\right|\to0$$ and hence by the fact that $\{n^{1+\gamma}|n\in P_i\}$ is approximated by $\tilde P_i$, i.e., for $0\leq k\leq\frac1{1+\gamma}M_i^{\frac12-\gamma}$, 
$$|f((M_i+k)^{1+\gamma})-f(M_i^{1+\gamma}+(1+\gamma)M_i^\gamma k)|\ll M_i^{-\gamma}\|f\|_{\infty,1}$$
and $\|f\|_{\infty,1}<\infty$ we obtain 
\begin{eqnarray}\label{eq9}
\left|\frac1{|P_i|}\sum\limits_{n\in P_i}f(pu(n^{1+\gamma}))\right|\to0
\end{eqnarray}
as $i\to\infty$. By formula (\ref{eq5}), the proportion in $[1,N]$ which is not covered by $P_i$'s is small relative to $N$. Also observe that for the $P_i$'s which intersect $[1,N]$, their lengths are small relative to $N$. Therefore, by (\ref{eq9}) we have
\begin{eqnarray*}
&{}&\limsup\limits_{N\to\infty}\left|\frac1N\sum\limits_{n=0}^{N-1}f(pu(n^{1+\gamma}))\right|\\
&\leq&\limsup\limits_{N\to\infty}\left|\frac1N\sum\limits_{n\in[1,N]\setminus(\bigcup P_i)}f(pu(n^{1+\gamma}))\right|+\limsup\limits_{N\to\infty}\left|\frac1N\sum\limits_{n\in[1,N]\cap(\bigcup P_i)}f(pu(n^{1+\gamma}))\right|\\
&\leq&\limsup\limits_{N\to\infty}\left|\frac1N\sum\limits_{n\in[1,N]\setminus(\bigcup P_i)}f(pu(n^{1+\gamma}))\right|+\limsup\limits_{N\to\infty}\left|\frac1N\sum\limits_{[1,N]\cap P_i\neq\emptyset}\sum\limits_{n\in P_i}f(pu(n^{1+\gamma}))\right|\\
&\leq&C_0\epsilon\|f\|_{\infty,0}+0=C_0\epsilon\|f\|_{\infty,0}.
\end{eqnarray*}
Let $\epsilon\to0$ and we complete the proof of the main theorem with $\gamma_0=\min\{2\beta/(\kappa_j+4)|j=1,2,\dots,k\}$.\\
\end{proof}

\section{Further Discussions}
In the introduction, we define the Diophantine condition on a point in $\Gamma\backslash\SL(2,\mathbb R)$. 
Now let $$S_{\kappa_1,\kappa_2,\dots,\kappa_k}=\{p\in\Gamma\backslash\SL(2,\mathbb R)|p\text{ is Diophantine of type }(\kappa_1,\kappa_2,\dots,\kappa_k)\}.$$
Then we can calculate the Hausdorff dimension of the complement of $S_{\kappa_1,\kappa_2,\dots,\kappa_k}$. In fact, we have the following 
\begin{theorem}\label{thm:dimension}
We have $$\dim_HS_{\kappa_1,\kappa_2,\dots,\kappa_k}^c=2+\frac2{\min\{\kappa_j+1|1\leq j\leq k\}}.$$ If $\min\{\kappa_1,\kappa_2,\dots,\kappa_k\}=1$, then $S_{\kappa_1,\kappa_2,\dots,\kappa_k}$ has zero Lebesgue measure but has full Hausdorff dimension.
\end{theorem}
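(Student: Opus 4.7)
The plan is to reduce the Diophantine condition to a cusp--excursion condition via Lemma~\ref{l33} and then invoke the Jarn\'ik--Besicovitch type theorem of Melian--Pestana \cite{MP}. By Lemma~\ref{l33}, $p\in S_{\kappa_1,\dots,\kappa_k}^c$ precisely when $\limsup_{t\to\infty}\bigl(\dist^{(j)}(g_t(p))-\alpha_j t\bigr)=\infty$ for some $j$, where $\alpha_j:=(\kappa_j-1)/(\kappa_j+1)\in[0,1)$. Writing $E_j$ for this excursion set at the $j$-th cusp, $S_{\kappa_1,\dots,\kappa_k}^c=\bigcup_{j=1}^{k}E_j$, so the finite stability of Hausdorff dimension reduces the problem to showing $\dim_H E_j=2+2/(\kappa_j+1)$ for each $j$. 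Taking the maximum over $j$ and using $1-\alpha_j=2/(\kappa_j+1)$ then yields the stated formula.

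For a fixed cusp $\eta_j$, the set $E_j$ is saturated by strong stable leaves of the geodesic flow, so it is the preimage, under the forward--endpoint map $\Gamma\backslash G\to\Gamma\backslash\partial\mathfrak H$, of a subset of the boundary circle. This endpoint map has $2$-dimensional fibres (a flow direction along the geodesic and a horocyclic direction transverse to it), and by a routine slicing argument Hausdorff dimensions on $\Gamma\backslash G$ are obtained from those on the boundary by adding $2$. Via Lemma~\ref{l32} and the identification~\eqref{id}, the excursion condition defining $E_j$ translates into: the forward endpoint is approximable by the $\Gamma$-translates of the parabolic fixed point of $\sigma_j$ with approximation rate $\alpha_j$. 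The Jarn\'ik--Besicovitch theorem of \cite{MP} assigns to this ``$\alpha_j$--approximable'' set Hausdorff dimension $1-\alpha_j$ on the boundary, and lifting through the $2$-dimensional fibre gives $\dim_H E_j=2+(1-\alpha_j)=2+2/(\kappa_j+1)$.

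For the dichotomy at $\min_j\kappa_j=1$, fix an index $j$ with $\kappa_j=1$, so $\alpha_j=0$ and the corresponding $E_j$ is the set of points with unbounded excursion into $\eta_j$. Sullivan's logarithm law gives $\limsup_{t\to\infty}\dist^{(j)}(g_t(p))/\log t=\tfrac12$ for Lebesgue-a.e.\ $p$, so $E_j$ is conull and $S_{\kappa_1,\dots,\kappa_k}\subset E_j^c$ is null. Full Hausdorff dimension of $S_{\kappa_1,\dots,\kappa_k}$ would then follow from Schmidt's game: each $E_j^c$ is a bounded-excess (and, when $\kappa_j=1$, bounded-orbit) set, known to be Schmidt winning on $\Gamma\backslash G$ by the classical Dani-type arguments and their refinements; finite intersections of winning sets are winning, and every winning subset of the $3$-dimensional manifold $\Gamma\backslash G$ has Hausdorff dimension~$3$.

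The principal obstacle is reconciling two nearby versions of the excursion set: Lemma~\ref{l33} yields the additive-excess formulation $\limsup(\dist^{(j)}(g_t p)-\alpha_j t)=\infty$, while \cite{MP} is typically stated in the rate formulation $\limsup\dist^{(j)}(g_t p)/t\geq\alpha$. The two sets are nested and differ only by a logarithmic correction invisible to Hausdorff dimension; a short sandwich between $\{\limsup\dist^{(j)}/t\geq\alpha_j\}$ and $\bigcap_{\epsilon>0}\{\limsup\dist^{(j)}/t>\alpha_j-\epsilon\}$, together with continuity in $\alpha$ of the boundary dimension $1-\alpha$, should give the precise equality $\dim_H E_j=2+2/(\kappa_j+1)$. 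Verifying the winning property in the $\min_j\kappa_j=1$ case for the additive-excess condition (rather than for the plain bounded-orbit set) is the remaining technical point, and should follow from the standard stability of winning under continuous flow conjugations.
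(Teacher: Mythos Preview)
Your approach is essentially the same as the paper's for the dimension formula: both reduce via Lemma~\ref{l33} to a geodesic--excursion statement and invoke \cite{MP}. The paper phrases the reduction slightly differently, sandwiching $S_{\kappa_1,\dots,\kappa_k}^c$ between $\bigcup_i\bigcap_j S_{j,\kappa_i}^c$ and $S_{\kappa_0,\dots,\kappa_0}^c$ (with $\kappa_0=\min_j\kappa_j$) and then citing Theorems~2 and~3 of \cite{MP} directly for these sets, which bypasses your additive--excess versus rate reconciliation entirely; your sandwich argument for that reconciliation is correct but unnecessary once one quotes \cite{MP} in the right form.

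The genuine gap is in the full--Hausdorff--dimension part. You want each $E_j^c$ to be Schmidt winning, but for indices with $\kappa_j>1$ the set $E_j^c=\{p:\limsup_t(\dist^{(j)}(g_tp)-\alpha_j t)<\infty\}$ is merely a conull set, and conull sets need not be winning; your closing remark that this ``should follow from stability of winning under continuous flow conjugations'' does not address the issue. The paper avoids this completely by the one--line inclusion $S_{1,1,\dots,1}\subset S_{\kappa_1,\dots,\kappa_k}$ (immediate from Lemma~\ref{l33} since $\alpha_j\geq0$) together with the Kleinbock--Margulis result \cite{KM} that the bounded--orbit set $S_{1,\dots,1}$ already has full Hausdorff dimension. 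This is both simpler and gap--free; your Schmidt--game route would work only after reducing to this same inclusion. For the null--measure claim the paper likewise uses less: ergodicity of the geodesic flow (the set $S_{1,\dots,1}$ is flow--invariant and misses a positive--measure set) suffices, so Sullivan's logarithm law is not needed.
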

\begin{remark}
Note that the Diophantine type remains constant on any weak unstable leaf of $\{g_t\}_{t>0}$. Therefore the set of non Diophantine points on any strong stable leaf has zero Hausdorff dimension. We will give a different proof of this theorem in Appendix A.
\end{remark}
\begin{proof}
For each cusp $\eta_j$ $(1\leq j\leq k)$, we define $S_{j,\kappa}$ to be the subset of points $p\in\Gamma\backslash\SL(2,\mathbb R)$ satisfying the condition that there exist $\mu,\nu>0$ such that for every point $\left(\begin{array}{c}a \\b\end{array}\right)\in m_j(\pi_j^{-1}(p))$, either $|b|\geq\mu$ or $|a|^\kappa|b|\geq\nu$. Here $\mu$ and $\nu$ depend on $p$. Then by definition, we have $$S_{\kappa_1,\kappa_2,\dots,\kappa_k}=S_{1,\kappa_1}\cap S_{2,\kappa_2}\cap\dots\cap S_{k,\kappa_k}$$
and hence $$S_{\kappa_1,\kappa_2,\dots,\kappa_k}^c=S_{1,\kappa_1}^c\cup S_{2,\kappa_2}^c\cup\dots\cup S_{k,\kappa_k}^c.$$ Let $\kappa_0=\min\{\kappa_1,\kappa_2,\dots,\kappa_k\}$. Note that by Lemma \ref{l33},
$$S_{\kappa_0,\kappa_0,\dots,\kappa_0}\subset S_{\kappa_1,\kappa_2,\dots,\kappa_k}.$$ Therefore we get
$$\bigcap_{j=1}^k S_{j,\kappa_1}^c\cup\bigcap_{j=1}^k S_{2,\kappa_2}^c\cup\dots\cup\bigcap_{j=1}^k S_{k,\kappa_k}^c\subset S_{\kappa_1,\kappa_2,\dots,\kappa_k}^c\subset S_{\kappa_0,\kappa_0,\dots,\kappa_0}^c.$$ By Theorem 2 and Theorem 3 in \cite{MP}, for any $\kappa\geq1$ we have $$\dim_HS_{\kappa,\kappa,\dots,\kappa}^c=2+\frac2{\kappa+1}\text{   and   }\dim_H\bigcap_{j=1}^k S_{j,\kappa}^c=2+\frac2{\kappa+1}.$$ This implies that 
\begin{eqnarray*}\dim_HS_{\kappa_1,\kappa_2,\dots,\kappa_k}^c&=&\dim_HS_{\kappa_0,\kappa_0,\dots,\kappa_0}^c=\max\left\{\dim_H\bigcap_{j=1}^k S_{j,\kappa_i}^c\Big|1\leq i\leq k\right\}\\
&=&2+\frac2{\kappa_0+1}=2+\frac2{\min\{\kappa_j+1|1\leq j\leq k\}}.
\end{eqnarray*}

For the second statement, if $\min\{\kappa_1,\kappa_2,\dots,\kappa_k\}=1$, then by Lemma \ref{l33} and the ergodicity of the geodesic flow on $\Gamma\backslash\SL(2,\mathbb R)$, we know that $S_{\kappa_1,\kappa_2,\dots,\kappa_k}$ has zero Haar measure. Since $$S_{1,1,\dots,1}\subset S_{\kappa_1,\kappa_2,\dots,\kappa_k}$$ and by Theorem 1.1 in \cite{KM} $S_{1,1,\dots,1}$ has full Hausdorff dimension, this implies that $S_{\kappa_1,\kappa_2,\dots,\kappa_k}$ has full Hausdorff dimension.
\end{proof}

Finally, using the same argument as in section 4, we can actually prove that if $p$ is Diophantine 
of type $(\kappa_1,\kappa_2,\dots,\kappa_k)$ with all $\kappa_j<3$ and $0\leq\gamma<1/4$, then for any $\epsilon>0$, there exists a compact subset $K_\epsilon\subset\Gamma\backslash\SL(2,\mathbb R)$ such that for all $T\geq 0$, 
$$\frac1T\left\{x\in[1,T]\Big|p\left(\begin{array}{cc}x^{\frac14} & x^{\frac34+\gamma} \\0 & x^{-\frac14}\end{array}\right)\in K_\epsilon\right\}
\geq1-\epsilon.$$
Then using the arguments of \cite{DS} and \cite[Proposition 4.1]{Sh}, we get
\begin{theorem}
If $p$ is Diophantine of type $(\kappa_1,\kappa_2,\dots,\kappa_k)$ with all $\kappa_j<3$ and $0\leq\gamma<1/4$, then the trajectory 
$$\left\{p\left(\begin{array}{cc}x^{\frac14} & x^{\frac34+\gamma} \\0 & x^{-\frac14}\end{array}\right)\Big| x\geq1\right\}$$
is equidistributed in $\Gamma\backslash\SL(2,\mathbb R)$.
\end{theorem}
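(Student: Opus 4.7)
The plan is to follow the two-step strategy suggested by the author. Step 1 is the non-divergence estimate displayed just above the theorem statement; Step 2 upgrades non-divergence to equidistribution via the arguments of Dani--Smillie \cite{DS} and Shah \cite[Prop.~4.1]{Sh}. Throughout, let $g_x$ denote the matrix appearing in the theorem.

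For Step 1 I would refine the proof of Lemma~\ref{l42}: instead of the shrinking neighborhoods $S_{j,\epsilon x^{-1/4+1/(\kappa_j+4)}}$ one has to control the density of $\{x \in [1,T] : pg_x \in S_{j,\delta}\}$ for a \emph{fixed} $\delta$. Pulling back through the map $m_j$, the condition $p g_x \in S_{j,\delta}$ says that some $(a,b) \in m_j(\pi_j^{-1}(p))$ satisfies $(ax^{-1/4}-bx^{3/4+\gamma})^2 + (bx^{1/4})^2 \leq \delta^2$. The second term forces $|b| \leq \delta x^{-1/4}$, and the Diophantine hypothesis $|a|^{\kappa_j}|b| \geq \nu_j$ then pins $|a|$ to a large range. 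A mean-value argument parallel to the proof of Lemma~\ref{l41} confines $x$ to an interval of length $\ll \delta\, x_0^{5/4}/|a|$ about $x_0 = (a/b)^{1/(1+\gamma)}$. Summing these lengths over $(a,b) \in m_j(\pi_j^{-1}(p))$ with $x_0 \leq T$, using the uniform discreteness from Lemma~\ref{l31}, should yield a total measure $\ll \delta^{\alpha} T$ for some $\alpha = \alpha(\kappa_j,\gamma) > 0$. The hypotheses $\kappa_j < 3$ and $\gamma < 1/4$ are exactly what is needed to make the exponent $\alpha$ positive.

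For Step 2 I would decompose $g_x = u(x^{1/2+\gamma})\,a_x$ with $a_x = \operatorname{diag}(x^{1/4}, x^{-1/4})$, and compute
\[
g_x^{-1}\frac{dg_x}{dx} = \begin{pmatrix} \tfrac{1}{4x} & (1+\gamma)\,x^{-1/2+\gamma} \\ 0 & -\tfrac{1}{4x} \end{pmatrix}.
\]
The ratio $4(1+\gamma)\,x^{1/2+\gamma}$ of the off-diagonal to the diagonal component tends to infinity, so the curve is asymptotically tangent to $\{u(t)\}$. By Step 1 the normalized measures $\mu_T := \frac{1}{T}\int_1^T \delta_{pg_x}\,dx$ are tight, hence every weak-$*$ subsequential limit $\nu$ is a probability measure. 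Using the commutation $a_x u(t) = u(x^{1/2}t)\,a_x$, right translation by $u(t)$ can be absorbed into a shift of the parameter $x$ by an amount $\sim t\,x^{1-\gamma}$, which is $o(T)$ on $[1,T]$ since $\gamma > 0$; a standard change-of-variable argument then forces $\nu$ to be $\{u(t)\}$-invariant. By the classification of $u$-invariant ergodic probability measures on $\Gamma\backslash \SL(2,\mathbb R)$, $\nu$ is either Haar or supported on a closed horocycle; the Diophantine condition, via Lemma~\ref{l33}, rules out the latter. Therefore $\nu = \mu$, and equidistribution follows.

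The main obstacle is the combinatorial estimate in Step 1: a naive dyadic decomposition of $(|a|,|b|)$ produces total interval lengths growing strictly faster than $T$, so the three-way interplay of $\kappa_j \geq 1$ (automatic by Lemma~\ref{l33} once the geodesic orbit is non-divergent), $\kappa_j < 3$, and $\gamma < 1/4$ must be carefully exploited to achieve the linear-in-$T$ bound with a positive power of $\delta$. Step 2 is more routine, reducing to a polynomial-trajectory unique-ergodicity argument of a type already developed in \cite{Sh} and \cite{DS}.
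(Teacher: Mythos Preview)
Your two-step outline matches the paper's sketch exactly: first the non-divergence estimate displayed just before the theorem, then the Dani--Smillie/Shah upgrade to equidistribution. The difference lies in how you propose to carry out Step~1.

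The paper's intended argument for Step~1 is not a direct summation over lattice points but literally a rerun of Lemma~\ref{l41} with the \emph{unweighted} function
\[
h(x)=(bx^{3/4+\gamma}-ax^{-1/4})^2+(bx^{1/4})^2,
\]
i.e.\ drop the extra factor $(x^{1/4-1/(\kappa+4)})^2$. In Case~1 ($ab<0$) the weighted AM--GM step gives $h(x)\geq(|a|^\kappa|b|)^{2/(\kappa+1)}x^{2((3/4+\gamma)-\kappa/4)/(\kappa+1)}$, which is bounded below on $[1,\infty)$ precisely when $\kappa\leq 3+4\gamma$; in Case~2 the derivative $g'(x)=(3/4+\gamma)bx^{-1/4+\gamma}+(a/4)x^{-5/4}$ is monotone decreasing exactly when $\gamma<1/4$, and the bound on $\delta_1,\delta_2$ again needs $(\kappa+1)/(4(1+\gamma))<1$, i.e.\ $\kappa<3+4\gamma$. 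Thus the hypotheses $\kappa_j<3$ and $\gamma<1/4$ fall out transparently, and the $(C,\tfrac12;\rho,\epsilon_0)$-good property feeds into the disjoint-interval argument of Lemma~\ref{l42} exactly as before, yielding the estimate without any lattice-point summation. Your ``main obstacle'' therefore disappears if you follow the paper's route; your direct summation is not wrong in principle, but it is the harder path.

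Two small corrections in Step~2: the factorization is $g_x=a_x\,u(x^{1/2+\gamma})$, not the other order (your derivative computation is nonetheless correct); and the parameter shift needed to absorb right translation by $u(t)$ is $\sim t\,x^{1/2-\gamma}$, not $t\,x^{1-\gamma}$. With the correct exponent the shift is $o(T)$ for every $\gamma\geq 0$, so your appeal to ``$\gamma>0$'' is both unnecessary and in conflict with the theorem, which explicitly allows $\gamma=0$.
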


\begin{appendices}
\section{Hausdorff Dimension of Non-Diophantine Points in Finite-Volume Quotients of $\SL(2,\mathbb R)$}

\subsection{Introduction and preliminaries}
Here we will give a a different proof of Theorem \ref{thm:dimension} using results of lattice counting problem, that is, 
\begin{theorem} \label{athm}
$$\dim_HS_{\kappa_1,\kappa_2,\dots,\kappa_k}^\textrm{c}=2+\frac2{\min\{\kappa_j+1|1\leq j\leq k\}}.$$
\end{theorem}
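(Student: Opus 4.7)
\medskip\noindent
\textbf{Proof plan.} The plan is to reduce the computation cusp-by-cusp, realize each $S_{j,\kappa_j}^{c}$ as a $\limsup$-set of thin strips indexed by the discrete cusp orbit $V_j = \Gamma\sigma_j e_1 \subset \mathbb R^2\setminus\{0\}$, and then compute the Hausdorff dimension directly from the lattice count $\#\{v\in V_j : \|v\|\le R\}\sim c_j R^2$. The target value $2+\frac{2}{\kappa_0+1}$, where $\kappa_0=\min_j\kappa_j$, should then decompose morally as two transverse ``free'' directions in $\Gamma\backslash G$ plus a Jarn\'ik--Besicovitch exponent $\frac{2}{\kappa_0+1}$ along the cusp direction.

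Since $S_{\kappa_1,\dots,\kappa_k}^{c}=\bigcup_{j=1}^{k} S_{j,\kappa_j}^{c}$ and Hausdorff dimension is stable under finite unions, it is enough to prove $\dim_H S_{j,\kappa}^{c}=2+\frac{2}{\kappa+1}$ at a single cusp $\eta_j$; the maximum over $j$ then picks out $\kappa_0$. Writing $(a_v(g),b_v(g))=g^{-1}v$ and identifying $m_j(\pi_j^{-1}(\Gamma g))$ with $(g^{-1}V_j)/\pm$, the definition unfolds to
$$S_{j,\kappa}^{c} \;=\; \bigcap_{\nu>0}\;\bigcup_{v\in V_j}\;\bigl\{\,\Gamma g : |b_v(g)|<\mu,\ |a_v(g)|^{\kappa}|b_v(g)|<\nu\,\bigr\}.$$
Fix a compact fundamental domain $F$ for $\Gamma$. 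For $\|v\|=R$ large, membership forces $|a_v(g)|\sim R$ on $F$, so the condition reduces to $|b_v(g)|<\nu/R^{\kappa}$; and because $|\nabla_g b_v|\sim R$, the resulting strip $B_v(\nu)\cap F$ has thickness $r=\nu/R^{\kappa+1}$ in one direction and is essentially two-dimensional in the other two. Thus $S_{j,\kappa}^{c}$ is the $\limsup$ of such tubes parametrized by $V_j$.

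For the upper bound I would invoke $\#\{v\in V_j:\|v\|\le R\}\sim c_j R^{2}$, a standard lattice-counting result (Eskin--McMullen for a general non-uniform $\Gamma$; primitive-lattice-point counting for $\SL(2,\mathbb Z)$). At dyadic scale $R=2^n$ there are $\sim 2^{2n}$ strips, each covered by $\sim r_n^{-2}$ balls of radius $r_n=\nu\cdot 2^{-(\kappa+1)n}$, giving the tail estimate
$$\sum_{n\ge N} 2^{2n}\,r_n^{\,s-2}\;\sim\;\nu^{s-2}\sum_{n\ge N} 2^{n[2-(\kappa+1)(s-2)]},$$
which tends to $0$ as $N\to\infty$ whenever $s>2+\frac{2}{\kappa+1}$, so $\mathcal H^{s}(S_{j,\kappa}^{c})=0$ for all such $s$. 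For the matching lower bound I would build a Cantor-like subset of $S_{j,\kappa}^{c}$: at each level $n$ pick a sub-collection of strips at a carefully chosen scale $R_n\gg R_{n-1}$ whose projections onto the two ``free'' directions are pairwise separated, and each of which fits inside a chosen parent strip from level $n-1$. The equidistribution of $V_j$ in angular sectors (again a lattice-counting consequence) supplies enough children per parent at the critical branching rate; a Frostman measure on the resulting Cantor set together with the mass distribution principle then yields $\dim_H S_{j,\kappa}^{c}\ge 2+\frac{2}{\kappa+1}$.

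The main obstacle is the nesting step in the lower bound: one must ensure that the child strip $B_{v'}(\nu_{n+1})$ really lies inside the parent $B_v(\nu_n)$ in all three $g$-directions, not merely in the thin one. This forces the Cantor construction to live in a thin neighborhood of the geodesic stable leaf into $\eta_j$, where $\nabla_g b_v$ varies in a controlled way; the two-dimensional transverse freedom that survives this restriction is precisely what contributes the ``$+2$'' in the dimension formula. Pinning down uniform constants in the lattice counting and in the strip geometry across all scales is the delicate quantitative point.
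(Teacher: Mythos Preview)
Your overall architecture --- reduce to a single cusp, recognize $S_{j,\kappa}^{c}$ as a $\limsup$ set governed by the discrete orbit $V_j=\Gamma\sigma_j e_1$, invoke the lattice count $\#\{v\in V_j:\|v\|\le R\}\sim c_jR^{2}$ for both bounds, and build a Cantor set for the lower bound --- is exactly the strategy of the paper's Appendix~A proof, and your upper-bound computation is essentially identical to the one carried out there.

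Where you diverge is in working three-dimensionally throughout. The paper makes one extra observation that collapses the problem to one real variable and thereby dissolves your ``main obstacle'': writing $p=\Gamma\begin{pmatrix}a&b\\c&d\end{pmatrix}$ with $c\neq0$, one checks (Lemma~A.4) that $p\in S_{j,\kappa}^{c}$ if and only if the real number $a/c$ lies in the one-dimensional set $S_\kappa^{c}\subset\mathbb R$ of reals not Diophantine of type $\kappa$ with respect to $\Gamma e_1$. Since the Diophantine type is constant along the two-dimensional $AN$-leaves, this immediately yields the ``$+2$'' you were trying to extract from transverse freedom, and reduces the entire dimension problem to showing $\dim_H S_\kappa^{c}=\tfrac{2}{\kappa+1}$ on the line. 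The Cantor construction then lives in $[0,1]$: the intervals are $[\alpha/\beta-c\beta^{-(\kappa+1)},\alpha/\beta+c\beta^{-(\kappa+1)}]$ for $(\alpha,\beta)\in\Gamma e_1$, and the sector-equidistribution of $\Gamma e_1$ (Eskin--McMullen / Gorodnik--Oh--Shah) furnishes enough disjoint children per parent at each stage. The three-dimensional nesting problem you flag --- ensuring child strips sit inside parent strips in all $g$-directions --- simply does not arise.

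A second, smaller difference: for the lower bound the paper packages the Cantor-set estimate via the McMullen--Urba\'nski--Kleinbock--Margulis ``tree-like collection'' theorem (your Theorem~\ref{athm21} analogue), rather than constructing a Frostman measure by hand; this is cosmetic, but saves some bookkeeping. The net effect is that the reduction to $a/c$ buys you a clean one-dimensional Jarn\'ik--Besicovitch argument and removes the only genuinely delicate step in your plan.
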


We will denote by $A\ll B$ if there exists a constant $C>0$ such that $A\leq CB$. We will specify the constant $C$ in the contexts. If $A\ll B$ and $B\ll A$, we will write $A\sim B$. We will denote the diameter of a set $E$ by $\text{diam}(E)$.

To prove Theorem \ref{athm} we need some preliminaries. Readers may refer to \cite{KM}. Let $X$ be a Riemannian manifold, $m$ a volume form and $E$ a compact subset of $X$. A countable collection $\mathcal A$ of compact subsets of $E$ is said to be tree-like if $\mathcal A$ is the union of finite subcollections $\mathcal A_j$ such that 
\begin{enumerate}
\item $\mathcal A_0=\{E\}$.
\item For any $j$ and $A,B\in\mathcal A_j$, either $A=B$ or $A\cap B=\emptyset$.
\item For any $j$ and $B\in\mathcal A_{j+1}$, there exists $A\in\mathcal A_j$ such that $B\subset A$.
\item $d_j(\mathcal A):=\sup_{A\in\mathcal A_j}\text{diam}(A)\to 0$ as $j\to\infty$.
\end{enumerate}

We write $\mathbf A_j=\bigcup_{A\in\mathcal A_j}A$ and define $\mathbf A_\infty=\bigcap_{j\in\mathbb N}\mathbf A_j.$ Moreover, we define $$\Delta_j(\mathcal A)=\inf_{B\in\mathcal A_j}\frac{m(\mathbf A_{j+1}\cap B)}{m(B)}.$$ The following theorem gives a way to estimate the Hausdorff dimension of $\mathbf A_\infty$.

\begin{theorem}[\cite{M}, \cite{U} or \cite{KM}]\label{athm21}
Let $(X,m)$ be a Riemannian manifold. Assume that there exist constants $D>0$ and $k>0$ such that $$m(B(x,r))\leq Dr^k$$ for any $x\in X$. Then for any tree-like collection $\mathcal A$ of subsets of $E$
$$\dim_H(\mathbf A_\infty)\geq k-\limsup_{j\to\infty}\frac{\sum_{i=0}^j\log(\frac1{\Delta_i(\mathcal A)})}{\log(\frac1{d_{j+1}(\mathcal A)})}$$
\end{theorem}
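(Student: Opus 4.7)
The plan is to invoke the mass distribution principle: construct a Borel probability measure $\mu$ supported on $\mathbf{A}_\infty$ such that $\mu(B(x,r)) \ll r^s$ for every $s$ strictly below $k-L$, where $L$ denotes the limsup in the statement; Frostman's lemma then yields $\dim_H \mathbf{A}_\infty \geq s$, and letting $s \nearrow k - L$ finishes the proof.

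First I would construct $\mu$ recursively by distributing the mass of each $A \in \mathcal{A}_j$ over its children in $\mathcal{A}_{j+1}$ in proportion to their $m$-volume. Starting with $\mu(E) = 1$, for $A \in \mathcal{A}_j$ and a child $B \in \mathcal{A}_{j+1}$ with $B \subset A$, I set
$$\mu(B) := \mu(A) \cdot \frac{m(B)}{m(A \cap \mathbf{A}_{j+1})}.$$
Property (3) in the definition of a tree-like family guarantees that the children of $A$ partition $A \cap \mathbf{A}_{j+1}$, so the children's masses sum to $\mu(A)$; thus $\mu$ is consistent on cylinders and extends to a Borel probability measure on $\mathbf{A}_\infty$. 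Telescoping this recursion along a descending chain $A_0 \supset A_1 \supset \cdots \supset A_j$ and using $m(A_i \cap \mathbf{A}_{i+1}) \geq \Delta_i(\mathcal{A}) \cdot m(A_i)$ gives
$$\mu(A_j) \leq \frac{m(A_j)}{m(E)} \prod_{i=0}^{j-1} \frac{1}{\Delta_i(\mathcal{A})}.$$

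Next I would bound $\mu$ on balls. Given $r > 0$ small, pick $j$ with $d_{j+1}(\mathcal{A}) \leq r$; then every $A \in \mathcal{A}_{j+1}$ meeting $B(x,r)$ has diameter $\leq r$, hence lies inside $B(x,2r)$. Summing the previous bound over such (disjoint) $A$'s and invoking the volume hypothesis $m(B(x,2r)) \leq D(2r)^k$ gives
$$\mu(B(x,r)) \leq \frac{D \cdot 2^k \cdot r^k}{m(E) \prod_{i=0}^{j} \Delta_i(\mathcal{A})}.$$
Fix $s < k-L$. The definition of $L$ provides, for all sufficiently large $j$, the inequality $\prod_{i=0}^{j} \Delta_i(\mathcal{A}) > d_{j+1}(\mathcal{A})^{k-s}$. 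Choosing $j$ so that $r \in [d_{j+1}, d_j)$ and combining the two displayed inequalities should produce $\mu(B(x,r)) \ll r^s$, as required.

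The main obstacle is precisely this last balancing: if the sequence $\{d_j\}$ is not approximately geometric, the crude range $r \in [d_{j+1}, d_j)$ allows the ratio $r/d_{j+1}$ to be unbounded, so substituting $r^k = d_{j+1}^{k} \cdot (r/d_{j+1})^k$ naively introduces an unbounded factor. I would address this either by refining $\mathcal{A}$ (inserting auxiliary levels to ensure consecutive diameters differ by a bounded multiplicative constant, while only altering the $\Delta_i$'s by absorbable constants), or by decomposing $B(x,r)$ at a finer level whose diameter is genuinely comparable to $r$ and tracking the resulting product of $\Delta_i$'s directly. Once this scale-matching is handled, the mass distribution principle yields the claimed lower bound on $\dim_H \mathbf{A}_\infty$.
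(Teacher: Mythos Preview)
The paper does not prove this theorem; it is quoted with attribution to \cite{M}, \cite{U}, and \cite{KM}, and no argument is given. So there is no ``paper's proof'' to compare against. Your approach via the mass distribution principle, with the measure $\mu$ built by distributing mass proportionally to $m$-volume down the tree, is exactly the standard route taken in those references, and your telescoped bound $\mu(A_j)\le m(A_j)\,m(E)^{-1}\prod_{i<j}\Delta_i^{-1}$ together with the packing estimate $\mu(B(x,r))\le D2^k r^k\, m(E)^{-1}\prod_{i\le j}\Delta_i^{-1}$ is correct.

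You are also right that the scale-matching step is the crux, and in fact it is essential rather than cosmetic: in the paper's own application (proof of Theorem~\ref{athm}) the levels are chosen with $\log l_{j+1}\ge j^2\log(l_1\cdots l_j)$, so $d_j/d_{j+1}$ is wildly unbounded, and replacing $d_{j+1}$ by $d_j$ in the denominator of the limsup would make the bound vacuous. Your first proposed fix, however, does not work as stated: simply ``inserting auxiliary levels'' by repeating $\mathcal A_j$ does not decrease $d_j$, and genuinely subdividing the sets of $\mathcal A_j$ alters the $\Delta_i$'s in a way that is \emph{not} absorbable by a constant (the subdivision can be arbitrarily fine). Your second suggestion is closer to what is actually done. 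The clean way through is to take $j$ minimal with $d_{j+1}\le r$ (so $r<d_j$), note that $-\log r>-\log d_j$, and hence
\[
\frac{\log\mu(B(x,r))}{\log r}\;\ge\; k+o(1)-\frac{\sum_{i=0}^{j}\log(1/\Delta_i)}{-\log r}
\;\ge\; k+o(1)-\frac{\sum_{i=0}^{j}\log(1/\Delta_i)}{\log(1/d_{j})}.
\]
One then has to pass from $\log(1/d_j)$ to $\log(1/d_{j+1})$; this is where the cited proofs use an additional regularity hypothesis on the tree (in \cite{KM} the sets are essentially balls, which controls how many level-$j$ sets can meet $B(x,r)$) or a strong-regularity variant of the tree-like definition. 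As the theorem is stated in this paper, that hypothesis is suppressed, so a fully self-contained proof at this level of generality would require you to spell out exactly which extra assumption you are invoking. In short: your outline is the right one, your identification of the obstacle is accurate, but your sketch of the fix is too optimistic and would need to be replaced by the actual argument from \cite{KM}.
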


\subsection{Some properties of lattices points in $\mathbb R^2$}
In this section, we will show some lemmas which will be used in the proof of Theorem \ref{athm}. For each cusp $\eta_j$ $(1\leq j\leq k)$, we define $S_{j,\kappa}$ to be the subset of points $p\in\Gamma\backslash\SL(2,\mathbb R)$ satisfying the condition that there exist $\mu,\nu>0$ such that for every point $\left(\begin{array}{c}a \\b\end{array}\right)\in m_j(\pi_j^{-1}(p))$, either $|b|\geq\mu$ or $|a|^\kappa|b|\geq\nu$. Here $\mu$ and $\nu$ depend on $p$. Then by definition, we have
$$S_{\kappa_1,\kappa_2,\dots,\kappa_k}^\textrm{c}=S_{1,\kappa_1}^\textrm{c}\cup S_{2,\kappa_2}^\textrm{c}\cup\dots\cup S_{k,\kappa_k}^\textrm{c}$$ 
and hence $$\dim_HS_{\kappa_1,\kappa_2,\dots,\kappa_k}^\textrm{c}=\max\{\dim_HS_{j,\kappa_j}^\textrm{c}|1\leq j\leq k\}.$$
Therefore, to prove Theorem~\ref{athm}, it suffices to prove $$\dim_HS_{j,\kappa_j}^\textrm{c}=2+\frac2{\kappa_j+1}.$$
In the rest of this part, we will consider  $S_{j,\kappa}^\textrm{c}$ for a fixed cusp $\eta_j$. Without loss of generality, we may assume that $\sigma_j=e$, $\eta_j=i\infty$ and that
$$\left(\begin{array}{cc}1 & 1 \\0 & 1\end{array}\right)\in\Gamma.$$ Since $\Gamma\cap N\neq\{e\}$, this implies that $\Gamma e_1$ is a discrete subset in $\mathbb R^2$. The following lemmas concern some properties of lattice points in $\Gamma e_1\subset\mathbb R^2$.

\begin{lemma}\label{al31}
There exists a constant $C>0$ such that for any $(\alpha,\beta)\in\Gamma e_1$ we have $|\beta|\geq C$ or $\beta=0$.
\end{lemma}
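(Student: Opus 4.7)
The plan is to rephrase the statement in terms of matrix entries. For $\gamma\in\Gamma$ with first column $(a,c)^T$, we have $\gamma e_1=(a,c)^T$, so $\beta=c$, and the claim reduces to showing that $\{|c|:\gamma\in\Gamma,\ c\ne 0\}$ is bounded below by a positive constant. I would prove this by isolating the cusp at $\infty$ with an embedded horoball and then running a routine M\"obius computation on the bottom-left entry.

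The first step is to produce an embedded cuspidal horoball. The Siegel set data from the introduction, specialized to $\sigma_j=e$, gives $\alpha_j>0$ such that $\pi_j$ maps $N_{\Omega_j}A_{\alpha_j}K$ isomorphically onto a neighborhood of $\eta_j=i\infty$ in $\Gamma\backslash G$ (exactly as used in the proof of Lemma~\ref{l32}). Translated to the upper half plane, and using that $N_{\Omega_j}$ is a fundamental domain for $\Gamma_\infty:=\Gamma\cap N$, this says that $\Gamma_\infty\backslash\mathcal B_T$ embeds in $\Gamma\backslash\mathfrak H$, where $\mathcal B_T:=\{z\in\mathfrak H:\operatorname{Im}z>T\}$ and $T:=\alpha_j^2$. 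Equivalently, for every $\gamma\in\Gamma$, if $\gamma\mathcal B_T\cap\mathcal B_T\ne\emptyset$ then $\gamma\in\Gamma_\infty$, which forces $c=0$.

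The second step is the M\"obius computation. Suppose $\gamma\in\Gamma$ with $c\ne 0$, so $\gamma\notin\Gamma_\infty$. Then $\gamma\mathcal B_T$ is a horoball tangent to $\mathbb R$ at $a/c$, and its maximal height above $\mathbb R$ is obtained by maximizing $\operatorname{Im}\gamma(x+iT)=T/((cx+d)^2+c^2T^2)$ over $x\in\mathbb R$; the maximum is attained at $x=-d/c$ and equals $1/(c^2T)$. By the first step, $\gamma\mathcal B_T$ is disjoint from $\mathcal B_T=\{\operatorname{Im}>T\}$, so $1/(c^2T)\le T$, i.e.\ $|c|\ge 1/T=1/\alpha_j^2$. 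Thus $C:=1/\alpha_j^2$ (or anything smaller) does the job.

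The only delicate point is verifying the embedded-horoball property of the first step; but this is an immediate consequence of the Siegel set structure already in force in the paper, together with the normalizations $\sigma_j=e$ and $\bigl(\begin{smallmatrix}1 & 1\\0 & 1\end{smallmatrix}\bigr)\in\Gamma$ which make $i\infty$ genuinely a cusp of $\Gamma$ preserved by integral translations. The computation in the second step is textbook hyperbolic geometry, so no further ingredients are needed.
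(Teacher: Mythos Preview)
Your argument is correct, but it takes a different route from the paper. The paper's proof is a two-line discreteness trick: since $\Gamma e_1$ is discrete, every nonzero vector in it has norm $\geq 2C$ for some $C>0$; if there were $(\alpha_0,\beta_0)\in\Gamma e_1$ with $0<|\beta_0|<C$, then applying $\bigl(\begin{smallmatrix}1 & n\\0 & 1\end{smallmatrix}\bigr)\in\Gamma$ with $n$ chosen so that $|\alpha_0+n\beta_0|<|\beta_0|$ produces a lattice point of norm $<\sqrt 2\,C<2C$, a contradiction. This uses nothing beyond discreteness of $\Gamma e_1$ and the presence of the unit translation in $\Gamma$.

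Your approach instead passes through the geometry of the cusp: you import the embedded-horoball property from the Siegel-set structure and then run the standard M\"obius computation showing that $\gamma\mathcal B_T$ has Euclidean diameter $1/(c^2T)$. This is valid and has the virtue of yielding an explicit constant $C=1/\alpha_j^2$ tied to the cusp geometry, but it is heavier machinery than the paper needs here---the paper's proof is entirely self-contained in two lines, whereas yours relies on unpacking the isomorphism in the proof of Lemma~\ref{l32} to extract the precise horoball statement. Both are fine; the paper's is more elementary, yours is more geometric.
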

\begin{proof}
We know that $\Gamma e_1$ is discrete in $\mathbb R^2$. So there is a constant $C>0$ such that for any point $(\alpha,\beta)\in\Gamma e_1$ we have $$\|(\alpha,\beta)\|\geq 2C$$ where $\|\cdot\|$ is the standard Euclidean norm. Suppose that there exists $(\alpha_0,\beta_0)\in\Gamma e_1$ with $0<|\beta_0|<C$. Then there exists an integer $n\in\mathbb Z$ such that $$|\alpha_0+n\beta_0|<\beta_0.$$ Since $\left(\begin{array}{cc}1 & 1 \\0 & 1\end{array}\right)\in\Gamma$, we have $$\left(\begin{array}{cc}1 & n \\0 & 1\end{array}\right)\left(\begin{array}{c}\alpha_0 \\\beta_0\end{array}\right)=\left(\begin{array}{c}\alpha_0+n\beta_0 \\\beta_0\end{array}\right)\in\Gamma e_1$$ and $$\|(\alpha_0+n\beta_0,\beta_0)\|\leq\sqrt 2 C<2C$$ which contradicts the definition of $C$. This completes the proof of the lemma.
\end{proof}

\begin{lemma}\label{al32}
There exists a constant $C>0$ such that for any two distinct points $(\alpha_1,\beta_1)$ and $(\alpha_2,\beta_2)$ in $\Gamma e_1$ we have $$|\alpha_1\beta_2-\alpha_2\beta_1|\geq C.$$
\end{lemma}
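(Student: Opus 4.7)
The plan is to reduce the statement to Lemma A.3.1 by a short algebraic identity. Write the two points as $(\alpha_i,\beta_i)^{T}=\gamma_i\cdot e_1$ for elements $\gamma_i\in\Gamma$. Since $\det\gamma_1=1$, expanding $\gamma_1^{-1}$ and applying it to $\gamma_2 e_1$ gives
\[
\gamma_1^{-1}\gamma_2\cdot e_1=\begin{pmatrix}b_1\alpha_2-a_1\beta_2\\ \alpha_1\beta_2-\alpha_2\beta_1\end{pmatrix},
\]
where $a_1,b_1$ are the second-column entries of $\gamma_1$. In other words, the quantity $\alpha_1\beta_2-\alpha_2\beta_1$ we need to bound is precisely the second coordinate of $\gamma_1^{-1}\gamma_2\cdot e_1$. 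Since $\gamma_1^{-1}\gamma_2\in\Gamma$, this vector itself lies in $\Gamma e_1$, so Lemma A.3.1 immediately forces its second coordinate to be either $0$ or at least the uniform constant $C$ of that lemma. The whole statement therefore reduces to ruling out the vanishing case.

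To rule out vanishing, I would use the structure of the cusp stabilizer. If $\alpha_1\beta_2-\alpha_2\beta_1=0$, then $\gamma_1^{-1}\gamma_2\cdot e_1$ lies on the $x$-axis, hence $\gamma_1^{-1}\gamma_2$ is upper triangular. Being an element of the non-uniform lattice $\Gamma$ that stabilises the cusp at $\infty$, it must lie in the parabolic subgroup generated (up to $\pm I$) by $\Gamma\cap N$; in particular its diagonal entries are $\pm 1$. Consequently $\gamma_2 e_1=\pm\gamma_1 e_1$, so $(\alpha_2,\beta_2)=\pm(\alpha_1,\beta_1)$. Interpreting ``distinct'' modulo the antipodal identification (consistent with the $\mathbb R^2/\pm$ target of the maps $m_j$ fixed in the introduction, and with the use of $\Gamma e_1$ later in Appendix A), this contradicts the hypothesis, and the lemma follows with the same constant $C$ as in Lemma A.3.1.

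I do not expect a genuine obstacle here. The only substantive move is recognising the determinant $\alpha_1\beta_2-\alpha_2\beta_1$ as the ``$\beta$-coordinate'' of another point of $\Gamma e_1$, which is essentially a reformulation; once it is in place, Lemma A.3.1 does all the work. The mild point requiring care is the bookkeeping that reduces vanishing to $(\alpha_2,\beta_2)=\pm(\alpha_1,\beta_1)$, which uses only the elementary fact that any upper-triangular element of $\Gamma$ stabilising $\infty$ is, up to $\pm I$, a power of the unipotent generator $u(1)\in\Gamma$ fixed in the preliminary normalisation $\sigma_j=e$, $\eta_j=i\infty$.
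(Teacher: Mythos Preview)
Your proposal is correct and follows essentially the same route as the paper: write $(\alpha_i,\beta_i)^T=\gamma_i e_1$, observe that the second coordinate of $\gamma_1^{-1}\gamma_2 e_1\in\Gamma e_1$ equals $\alpha_1\beta_2-\alpha_2\beta_1$, and invoke Lemma~\ref{al31}. The paper simply asserts that distinctness forces $\alpha_1\beta_2-\alpha_2\beta_1\neq 0$, whereas you spell out the supporting fact that any upper-triangular element of $\Gamma$ lies in $\pm\langle u(1)\rangle$ (since the cusp stabiliser is purely parabolic); this is a welcome clarification but not a different argument.
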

\begin{proof}
Now let $\gamma_1,\gamma_2\in\Gamma$ be such that 
$$\gamma_1=\left(\begin{array}{cc}\alpha_1 & * \\\beta_1 & *\end{array}\right)
\quad\gamma_2=\left(\begin{array}{cc}\alpha_2 & * \\\beta_2 & *\end{array}\right).$$
Then we have $$\gamma_1^{-1}\gamma_2 e_1=\left(\begin{array}{c}* \\\alpha_1\beta_2-\alpha_2\beta_1\end{array}\right).$$
Note that $(\alpha_1,\beta_1)$ and $(\alpha_2,\beta_2)$ are distinct and hence $\alpha_1\beta_2-\alpha_2\beta_1\neq0$. By Lemma \ref{al31}, we conclude that $$|\alpha_1\beta_2-\alpha_2\beta_1|\geq C$$ for some $C>0$.
\end{proof}
\begin{remark}
We will fix this constant $C$ for later use. Note that by the definition of $C$, for any point $(\alpha,\beta)\in\Gamma e_1$ we have $\|(\alpha,\beta)\|\geq 2C$.
\end{remark}

\begin{definition}
For $l>0$ and $0\leq\theta_1\leq\theta_2<2\pi$, we define the subset of $\mathbb R^2$ $$S(l,\theta_1,\theta_2):=\{(x,y)\in\mathbb R^2|l\leq r\leq 2l,\theta_1<\theta<\theta_2\}$$ where $(r,\theta)$ are the polar coordinates of $(x,y)$.
\end{definition}

\begin{theorem}[\cite{EM}, \cite{GOS}]\label{athm31}
We have $\#|\Gamma e_1\cap S(l,\theta_1,\theta_2)|\sim l^2(\theta_2-\theta_1)$ as $l\to\infty$.
\end{theorem}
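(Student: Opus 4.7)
The plan is to reinterpret the count as an orbit-counting problem for $N = \operatorname{stab}_G(e_1)$ acting on the homogeneous space $\Gamma \backslash G$, and then to invoke the lattice point counting theorem of Eskin and McMullen \cite{EM}, or equivalently the version due to Gorodnik, Oh and Shah \cite{GOS}. The reduction is that the map $gN \mapsto g e_1$ identifies $G/N$ with $\mathbb{R}^2 \setminus \{0\}$, and pushes a suitable Haar measure on $G/N$ to a constant multiple of the Euclidean density $r\,dr\,d\theta$ in polar coordinates. Under this identification, points of $\Gamma e_1$ correspond to the right cosets $\gamma(\Gamma \cap N) \in \Gamma/(\Gamma \cap N)$, so
$$\#\bigl(\Gamma e_1 \cap S(l,\theta_1,\theta_2)\bigr) = \bigl|\{\gamma(\Gamma \cap N) : \gamma e_1 \in S(l,\theta_1,\theta_2)\}\bigr|.$$
Under the normalization $\sigma_j = e$ adopted at the beginning of the section, $\Gamma \cap N$ is a cocompact lattice in $N$, which places us squarely in the setup required by \cite{EM,GOS}.

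First I would verify well-roundedness of the expanding family $B_l = \{g \in G : g e_1 \in S(l,\theta_1,\theta_2)\}$. These sets are right $N$-invariant, and their images in $\mathbb{R}^2$ have piecewise-smooth boundaries (two circular arcs of radii $l$ and $2l$ together with two radial segments). A routine argument then shows that an $\epsilon$-thickening of $B_l$ in $G$ changes $\operatorname{vol}_{G/N}(B_l)$ by at most an $O(\epsilon)$-fraction, uniformly in $l \geq 1$. Second, the volume is computed directly in polar coordinates as
$$\operatorname{vol}_{G/N}(B_l) \;=\; c_0 \int_{\theta_1}^{\theta_2}\!\!\int_l^{2l} r\,dr\,d\theta \;=\; \tfrac{3c_0}{2}(\theta_2-\theta_1)\,l^2,$$
for a fixed constant $c_0 > 0$ depending only on the mutual normalization of the Haar measures on $G$, $N$ and $G/N$.

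Finally I would apply the counting theorem of \cite{EM} (alternatively, \cite{GOS}) to the well-rounded family $B_l$ to obtain
$$\#\bigl(\Gamma e_1 \cap S(l,\theta_1,\theta_2)\bigr) \;\sim\; \frac{\operatorname{vol}_{G/N}(B_l)}{\operatorname{vol}(\Gamma\backslash G)\cdot \operatorname{vol}(\Gamma \cap N \backslash N)} \;\sim\; l^2(\theta_2 - \theta_1),$$
which is the claim. The main technical point in a fully self-contained write-up would be verifying well-roundedness and tracking the Haar-measure normalizations; both are standard and already handled in the cited references. An alternative, more elementary route in our rank-one setting would use the spectral theory of Eisenstein series on $\Gamma \backslash \mathfrak{H}$ (as in the classical work of Patterson, Good and Phillips–Rudnick) to count $\gamma e_1$ in expanding Euclidean regions directly, but quoting \cite{EM} or \cite{GOS} keeps the exposition short and places the lemma within the framework used elsewhere in this appendix.
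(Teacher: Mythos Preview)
Your proposal is correct and matches the paper's approach exactly: the paper does not give its own proof of this statement but simply quotes it as a known result from \cite{EM} and \cite{GOS}, and your write-up is precisely the standard reduction (identify $G/N$ with $\mathbb{R}^2\setminus\{0\}$, check well-roundedness of the sector family, apply the Eskin--McMullen/Gorodnik--Oh--Shah counting theorem) that those references carry out.
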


\begin{lemma}\label{al33}
Fix $C>0$ in Lemma \ref{al32} and let $\kappa\geq1$. There exists a constant $C_0>0$ with the following property: for any $(\alpha,\beta)\in\Gamma e_1$ with $0<\frac\alpha\beta<1$, there exists a large constant $L_{(\alpha,\beta)}>0$ such that for any $l>L_{(\alpha,\beta)}$ the interval $\left[\frac\alpha\beta-\frac C{18}\cdot\frac1{\beta^{\kappa+1}},\frac\alpha\beta+\frac C{18}\cdot\frac1{\beta^{\kappa+1}}\right]$ contains at least $C_0l^2/{\beta^{\kappa+1}}$ many disjoint subintervals $\left[\frac{\tilde\alpha}{\tilde\beta}-\frac C{18}\cdot\frac1{\tilde\beta^{\kappa+1}},\frac{\tilde\alpha}{\tilde\beta}+\frac C{18}\cdot\frac1{\tilde\beta^{\kappa+1}}\right]$ where $(\tilde\alpha,\tilde\beta)\in\Gamma e_1\cap S(l,\frac\pi4,\frac\pi2)$.
\end{lemma}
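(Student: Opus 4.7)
The plan is to convert the condition $\tilde\alpha/\tilde\beta \approx \alpha/\beta$ into a condition on the angular coordinate of the lattice point $(\tilde\alpha,\tilde\beta)$, apply the sector lattice-counting Theorem \ref{athm31} to lower-bound the number of qualifying lattice points, and finally check via Lemma \ref{al32} that the associated subintervals are pairwise disjoint and all sit inside the target interval. To leave room for each subinterval inside the big one, I will work with the shrunken target radius $\rho := C/(36\beta^{\kappa+1})$ rather than $C/(18\beta^{\kappa+1})$.

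First, for any $(\tilde\alpha,\tilde\beta) \in S(l,\pi/4,\pi/2)$ the angle $\theta = \arctan(\tilde\beta/\tilde\alpha) \in (\pi/4,\pi/2)$ satisfies $\tilde\alpha/\tilde\beta = \cot\theta$. Since $\cot$ is bi-Lipschitz on $[\pi/4,\pi/2]$ with absolute constants, the condition $|\tilde\alpha/\tilde\beta - \alpha/\beta| \leq \rho$ translates into $\theta \in [\theta_1,\theta_2]$ for some sub-interval $[\theta_1,\theta_2] \subset (\pi/4,\pi/2)$ with $\theta_2 - \theta_1 \geq c_1/\beta^{\kappa+1}$ for some absolute constant $c_1 > 0$.

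Second, apply Theorem \ref{athm31} to the sub-sector $S(l,\theta_1,\theta_2)$: for all $l$ sufficiently large (this determines $L_{(\alpha,\beta)}$) we obtain
$$\#|\Gamma e_1 \cap S(l,\theta_1,\theta_2)| \geq c_2\, l^2(\theta_2-\theta_1) \geq C_0\, l^2/\beta^{\kappa+1}$$
with absolute constants $c_2, C_0 > 0$, yielding the required count. Each such lattice point has $\tilde\beta \in [l/\sqrt{2},2l]$, so $C/(18\tilde\beta^{\kappa+1}) \ll l^{-(\kappa+1)}$ is arbitrarily small compared to $\rho$ once $l$ is large, ensuring that the subinterval $[\tilde\alpha/\tilde\beta - C/(18\tilde\beta^{\kappa+1}), \tilde\alpha/\tilde\beta + C/(18\tilde\beta^{\kappa+1})]$ is contained in the big interval. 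For disjointness, Lemma \ref{al32} applied to any two distinct such lattice points $(\tilde\alpha_i,\tilde\beta_i)$ gives
$$\left|\frac{\tilde\alpha_1}{\tilde\beta_1} - \frac{\tilde\alpha_2}{\tilde\beta_2}\right| = \frac{|\tilde\alpha_1\tilde\beta_2 - \tilde\alpha_2\tilde\beta_1|}{\tilde\beta_1\tilde\beta_2} \geq \frac{C}{4 l^2},$$
which, since $\kappa \geq 1$ and $l$ is large, dominates the sum $\ll l^{-(\kappa+1)}$ of the two subinterval half-widths.

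The main obstacle is obtaining $C_0$ as an absolute constant (rather than one depending on $(\alpha,\beta)$); this reduces to a uniform form of the sector lattice-counting Theorem \ref{athm31}, which is available from \cite{EM,GOS} as long as $l(\theta_2 - \theta_1) \gg l/\beta^{\kappa+1}$ is sufficiently large, and this is achieved simply by enlarging $L_{(\alpha,\beta)}$ accordingly.
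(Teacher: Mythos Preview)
Your proposal is correct and follows essentially the same route as the paper: convert the condition on $\tilde\alpha/\tilde\beta$ to an angular condition via $\cot$/$\operatorname{arccot}$, apply the sector lattice-counting Theorem~\ref{athm31} to get $\gg l^2/\beta^{\kappa+1}$ points, and use Lemma~\ref{al32} for disjointness. The only minor differences are presentational: the paper proves disjointness first and more sharply (showing $|\tilde\alpha_1/\tilde\beta_1 - \tilde\alpha_2/\tilde\beta_2| \geq \tfrac{C}{16}(\tilde\beta_1^{-(\kappa+1)} + \tilde\beta_2^{-(\kappa+1)})$ directly, without needing $l$ large), while you explicitly address containment of the subintervals in the big interval via the shrunken radius $\rho$, a point the paper leaves implicit.
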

\begin{proof}
Suppose that $\left[\frac\alpha\beta-\frac C{18}\cdot\frac1{\beta^{\kappa+1}},\frac\alpha\beta+\frac C{18}\cdot\frac1{\beta^{\kappa+1}}\right]$ contains two subintervals
$$\left[\frac{\tilde\alpha}{\tilde\beta}-\frac C{18}\cdot\frac1{\tilde\beta^{\kappa+1}},\frac{\tilde\alpha}{\tilde\beta}+\frac C{18}\cdot\frac1{\tilde\beta^{\kappa+1}}\right]\text{ and }\left[\frac{\tilde\gamma}{\tilde\delta}-\frac C{18}\cdot\frac1{\tilde\delta^{\kappa+1}},\frac{\tilde\gamma}{\tilde\delta}+\frac C{18}\cdot\frac1{\tilde\delta^{\kappa+1}}\right]$$ 
where $(\tilde\alpha,\tilde\beta)$ and $(\tilde\gamma,\tilde\delta)$ are two distinct points in $\Gamma e_1\cap S(l,\frac\pi4,\frac\pi2)$. By Lemma \ref{l32}, we have 
\begin{eqnarray*}
&{}&\left|\frac{\tilde\alpha}{\tilde\beta}-\frac{\tilde\gamma}{\tilde\delta}\right|=\frac{|\tilde\alpha\tilde\delta-\tilde\beta\tilde\gamma|}{|\tilde\beta\tilde\delta|}\geq\frac C{|\tilde\beta\tilde\delta|}\geq\frac C{4l^2}\\
&=&\frac C{16}\left(\frac1{(l/\sqrt 2)^2}+\frac1{(l/\sqrt 2)^2}\right)\geq\frac C{16}\left(\frac1{\tilde\beta^2}+\frac1{\tilde\delta^2}\right)\\
&\geq&\frac C{16}\left(\frac1{\tilde\beta^{\kappa+1}}+\frac1{\tilde\delta^{\kappa+1}}\right).
\end{eqnarray*}
This implies that any two such subintervals are disjoint, and hence to prove the lemma it suffices to prove that in the interval $\left[\frac\alpha\beta-\frac C{18}\cdot\frac1{\beta^{\kappa+1}},\frac\alpha\beta+\frac C{18}\cdot\frac1{\beta^{\kappa+1}}\right]$ there are at least $C_0l^2/{\beta^{\kappa+1}}$ many points of the form $\tilde\alpha/\tilde\beta$ where $(\tilde\alpha,\tilde\beta)\in\Gamma e_1\cap S(l,\frac\pi4,\frac\pi2)$. We have 
\begin{eqnarray*}
&{}&\frac{\tilde\alpha}{\tilde\beta}\in\left[\frac\alpha\beta-\frac C{18}\cdot\frac1{\beta^{\kappa+1}},\frac\alpha\beta+\frac C{18}\cdot\frac1{\beta^{\kappa+1}}\right]\\
&\iff&\arg(\tilde\alpha,\tilde\beta)\in\left[\arccot\left(\frac\alpha\beta+\frac C{18}\cdot\frac1{\beta^{\kappa+1}}\right),\arccot\left(\frac\alpha\beta-\frac C{18}\cdot\frac1{\beta^{\kappa+1}}\right)\right].
\end{eqnarray*}
Since $\left|\arccot\left(\frac\alpha\beta+\frac C{18}\cdot\frac1{\beta^{\kappa+1}}\right)-\arccot\left(\frac\alpha\beta-\frac C{18}\cdot\frac1{\beta^{\kappa+1}}\right)\right|\sim\frac1{\beta^{\kappa+1}}$, by Theorem \ref{athm31} we know that the number of points in $S\left(l,\arccot\left(\frac\alpha\beta+\frac C{18}\cdot\frac1{\beta^{\kappa+1}}\right),\arccot\left(\frac\alpha\beta-\frac C{18}\cdot\frac1{\beta^{\kappa+1}}\right)\right)$ is asymptotically equal to $l^2/\beta^{\kappa+1}$ up to a constant. Note that the implicit constant is absolute since $0<\alpha/\beta<1$. This completes the proof of the lemma.
\end{proof}

\subsection{Hausdorff dimension of the subset of non-Diophantine points}
In this section, we will give a proof of Theorem \ref{athm}. We need some preparations.
\begin{definition}
We say that $x\in\mathbb R$ is Diophantine of type $\kappa$ with respect to $\Gamma e_1$ if there exists a constant $\tilde C>0$ such that for any $(\alpha,\beta)\in\Gamma e_1$ with $\beta\neq0$ we have $$|\beta|^\kappa|x\beta-\alpha|\geq \tilde C.$$ We denote by $S_\kappa$ the subset of $\mathbb R$ of all Diophantine numbers of type $\kappa$ with respect to $\Gamma e_1$.
\end{definition}

\begin{lemma}\label{al41}
Let $p=\Gamma\left(\begin{array}{cc}a & b \\c & d\end{array}\right)\in\Gamma\backslash\SL(2,\mathbb R)$ with $c\neq0$. Then $p\in S_{j,\kappa}^\textrm{c}$ if and only if $a/c\in S_\kappa^\textrm{c}$. 
\end{lemma}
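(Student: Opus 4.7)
The plan is to make the identification $m_j(\pi_j^{-1}(p)) = g^{-1}(\Gamma e_1)/\pm$ explicit (using $\sigma_j = e$) and then translate one Diophantine condition into the other via the linear change of variables. Writing $(a',b')^T = g^{-1}(\alpha,\beta)^T$ for $(\alpha,\beta) \in \Gamma e_1$, and using $\det g = 1$, the first step is to record the two identities
\[
b' = a\beta - c\alpha = c\!\left(\tfrac{a}{c}\beta - \alpha\right), \qquad \beta = ca' + db',
\]
together with $\alpha = aa' + bb'$. In particular $|b'| = |c|\cdot|(a/c)\beta-\alpha|$ exactly, so smallness of $b'$ is precisely smallness of $(a/c)\beta-\alpha$.

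Next I would reformulate both conditions sequentially. The point $p$ lies in $S_{j,\kappa}^{\mathrm{c}}$ iff there is a sequence $(a'_n,b'_n) \in m_j(\pi_j^{-1}(p))$ with $|b'_n| \to 0$ and $|a'_n|^\kappa|b'_n| \to 0$; similarly $a/c \in S_\kappa^{\mathrm{c}}$ iff there is $(\alpha_n,\beta_n) \in \Gamma e_1$ with $\beta_n \ne 0$ and $|\beta_n|^\kappa|(a/c)\beta_n - \alpha_n| \to 0$. Each direction then proceeds by a dichotomy. If some term already vanishes exactly (i.e.\ $b'_n = 0$ or $(a/c)\beta_n = \alpha_n$), the other side inherits a zero term at once: e.g.\ if $b'_n = 0$ then $\beta_n = ca'_n \ne 0$ (since $(a'_n,b'_n) \ne 0$ as a point of the discrete set $g^{-1}\Gamma e_1$), and $|(a/c)\beta_n-\alpha_n|=0$. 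In the generic case, the unbounded coordinate carries the estimate: in $(\Rightarrow)$ one would have $|a'_n|\to\infty$, so by $\beta=ca'+db'$, $|\beta_n| \sim |c||a'_n|\to\infty$, and then
\[
|\beta_n|^\kappa|(a/c)\beta_n-\alpha_n| \;=\; |\beta_n|^\kappa|b'_n|/|c| \;\sim\; |c|^{\kappa-1}|a'_n|^\kappa|b'_n| \;\to\; 0;
\]
the $(\Leftarrow)$ direction runs the same argument in reverse, using $|a'_n|\sim|\beta_n|/|c|$.

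The main (and only) subtle point is to rule out the intermediate regime, namely that $|b'_n|\to 0$ with $b'_n \ne 0$ while $|a'_n|$ stays bounded (and symmetrically on the other side). The key tool is that $g^{-1}\Gamma e_1$ is discrete in $\mathbb{R}^2$ and bounded away from the origin (via Lemma~\ref{al32}): a discrete set has no accumulation point in any bounded region, so either some $b'_n$ is exactly $0$ (which drops us into the easy first subcase above) or the sequence must leave every strip $\{|a'|\le M\}$, forcing $|a'_n|\to\infty$. This discreteness argument, though short, is the hinge of the lemma; once it is stated cleanly, the two identities above make the equivalence $p\in S_{j,\kappa}^{\mathrm{c}} \Leftrightarrow a/c \in S_\kappa^{\mathrm{c}}$ mechanical.
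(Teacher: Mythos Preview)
Your proposal is correct and follows essentially the same route as the paper: both compute $m_j(\pi_j^{-1}(p))=g^{-1}\Gamma e_1$, use the identities $b'=a\beta-c\alpha$ and $\beta=ca'+db'$ (the paper writes the latter as $c(d\alpha-b\beta)=d(c\alpha-a\beta)+\beta$), invoke discreteness of $\Gamma e_1$ to force the large coordinate to diverge, and conclude $|a'|\sim|\beta|/|c|$. The only difference is cosmetic: you spell out the exact-zero subcase of the dichotomy and cite Lemma~\ref{al32} for the origin bound, whereas the paper appeals to Lemma~\ref{al31} and general discreteness; the substance is the same.
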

\begin{proof}
We have
$$m_j(\pi_j^{-1}(p))=\left(\begin{array}{cc}d & -b \\-c & a\end{array}\right)\Gamma e_1
=\left\{\left(\begin{array}{c}d\alpha-b\beta \\-c\alpha+a\beta\end{array}\right)\bigg|\left(\begin{array}{c}\alpha \\ \beta\end{array}\right)\in\Gamma e_1\right\}.$$
By the definition of $S_{j,\kappa}$, if $p\in S_{j,\kappa}^\textrm{c}$, then there exist infinitely many $(\alpha,\beta)\in\Gamma e_1$ such that $$|a\beta-c\alpha|\to0\text{ and }|d\alpha-b\beta|^{\kappa}|a\beta-c\alpha|\to 0.$$ By the discreteness of $\Gamma e_1$, this implies that $|\beta|\to\infty$. Note that $$|d\alpha-b\beta|=\frac{|cd\alpha-cb\beta|}c=\frac{|cd\alpha-(ad-1)\beta|}c=\frac{|d(c\alpha-a\beta)+\beta|}c.$$ Therefore we have $|d\alpha-b\beta|\sim|\beta|$ and $a/c\in S_\kappa^\textrm{c}$. Here the implicit constant in $\sim$ depends on $p$. 

Conversely, if $a/c\in S_\kappa^\textrm{c}$, then there exist infinitely many $(\alpha,\beta)\in\Gamma e_1$ with $\beta\neq0$ such that $$\left|\beta\right|^\kappa\left|\frac ac\beta-\alpha\right|\to0.$$
By Lemma \ref{al31}, this implies that $$|a\beta-c\alpha|\to0$$ and consequently $$|\beta|\to\infty\text{ and }|d\alpha-b\beta|=\frac{|d(c\alpha-a\beta)+\beta|}c\sim|\beta|.$$ Hence we have $$|a\beta-c\alpha|\to0,\quad|d\alpha-b\beta|^\kappa|a\beta-c\alpha|\to0$$ and $p\in S_{j,\kappa}^\textrm{c}.$ This completes the proof of the lemma.
\end{proof}

\begin{proof}[Proof of Theorem \ref{athm}]
From the discussions above, we know that in order to prove Theorem \ref{athm} it is enough to show that $$\dim_HS_{j,\kappa}^\textrm{c}=2+\frac2{\kappa+1}.$$
By Lemma \ref{l41} and the fact that the subset $\left\{\Gamma\left(\begin{array}{cc}a & b \\0 & a^{-1}\end{array}\right)\right\}\subset\Gamma\backslash\SL(2,\mathbb R)$ has dimension 2, it suffices to prove that $$\dim_H S_\kappa^\textrm{c}=\frac2{\kappa+1}.$$
In the rest of this section we will prove this formula.

Since $\left(\begin{array}{cc}1 & 1 \\0 & 1\end{array}\right)\in\Gamma$, for any $n\in\mathbb Z$ we have $$S_\kappa^\textrm{c}\cap(n,n+1)=n+S_\kappa^\textrm{c}\cap(0,1).$$ Therefore, we only need to compute the Hausdorff dimension of $S_\kappa^\textrm{c}\cap(0,1)$. For the upper bound, by the definition of $S_\kappa$, we can construct an open cover $$\left\{I_{(\alpha,\beta)}=\left(\frac\alpha\beta-\frac1{\beta^{\kappa+1}},\frac\alpha\beta+\frac1{\beta^{\kappa+1}}\right)\Bigg|(\alpha,\beta)\in\Gamma e_1,\alpha/\beta\in(0,1)\right\}\supseteq S_\kappa^\textrm{c}\cap(0,1).$$ 
For $\delta>0$ by Theorem \ref{athm31} we have 
\begin{eqnarray*}
&{}&\sum_{\substack{(\alpha,\beta)\in\Gamma e_1\\\alpha/\beta\in(0,1)}}\text{diam}(I_{(\alpha,\beta)})^\delta\\
&\ll&\sum_{n=1}^\infty\sum_{(\alpha,\beta)\in\Gamma e_1\cap S(2^nC,\frac\pi4,\frac\pi2)}\frac1{\beta^{\delta(\kappa+1)}}\\
&\ll&\sum_{n=1}^\infty \frac{2^{2n}}{2^{n\delta(\kappa+1)}}=\sum_{n=1}^\infty \frac1{2^{n(\delta(\kappa+1)-2)}}.
\end{eqnarray*}
If $\delta>2/(\kappa+1)$, then $\sum_{\substack{(\alpha,\beta)\in\Gamma e_1\\\alpha/\beta\in(0,1)}}\text{diam}(I_{(\alpha,\beta)})^\delta$ converges and hence by properties of Hausdorff dimension we have $$\dim _HS_\kappa^\textrm{c}\cap(0,1)\leq\frac2{\kappa+1}.$$

For the lower bound, let $\epsilon>0$ be fixed and we construct a tree-like set in $S_\kappa^\textrm{c}\cap(0,1)$ as the intersection of closed subsets in $[0,1]$ by induction. Let $\mathcal A_0=\{[0,1]\}$ and $\mathbf A_0=[0,1]$. Let $l_1$ be a sufficiently large number and define $$\mathcal A_1=\left\{\left[\frac\alpha\beta-\frac C{18}\cdot\frac1{\beta^{\kappa+\epsilon+1}},\frac\alpha\beta+\frac C{18}\cdot\frac1{\beta^{\kappa+\epsilon+1}}\right]\Bigg|(\alpha,\beta)\in\Gamma e_1\cap S(l_1,\frac\pi4,\frac\pi2)\right\}$$ and $\mathbf A_1=\bigcup\mathcal A_1$. Suppose that we find $l_1<l_2<\dots<l_j$ and construct families $\mathcal A_j,\mathcal A_{j-1},\dots,\mathcal A_0$ and closed subsets $\mathbf A_j\subseteq\mathbf A_{j-1}\subseteq\dots\subseteq \mathbf A_1\subseteq \mathbf A_0$. Now by Lemma \ref{al33}, we can find a sufficiently large $l_{j+1}>0$ such that 
\begin{enumerate}
\item $\log l_{j+1}\geq j^2\log(l_jl_{j-1}\dots l_1)$.
\item For every $\left[\frac\alpha\beta-\frac C{18}\cdot\frac1{\beta^{\kappa+\epsilon+1}},\frac\alpha\beta+\frac C{18}\cdot\frac1{\beta^{\kappa+\epsilon+1}}\right]\in \mathcal A_j$, it contains at least $C_0l_{j+1}^2/{l_j^{\kappa+\epsilon+1}}$ subintervals (since $\beta\sim l_j$) of the form $\left[\frac{\tilde\alpha}{\tilde\beta}-\frac C{18}\cdot\frac1{{\tilde\beta}^{\kappa+\epsilon+1}},\frac{\tilde\alpha}{\tilde\beta}+\frac C{18}\cdot\frac1{{\tilde\beta}^{\kappa+\epsilon+1}}\right]$ with $(\tilde\alpha,\tilde\beta)\in\Gamma e_1\cap S(l_{j+1},\frac\pi4,\frac\pi2)$.
\end{enumerate}
We denote the family of all these new subintervals by $\mathcal A_{j+1}$ as $\left[\frac\alpha\beta-\frac C{18}\cdot\frac1{\beta^{\kappa+\epsilon+1}},\frac\alpha\beta+\frac C{18}\cdot\frac1{\beta^{\kappa+\epsilon+1}}\right]$ runs through all the intervals in $\mathcal A_j$ and let $\mathbf A_{j+1}=\bigcup\mathcal A_{j+1}$.

Now we take $\mathbf A_\infty=\bigcap_{j=0}^\infty\mathbf A_j$ and $\mathcal A=\bigcup_{j=0}^\infty\mathcal A_j$. From the construction of $\mathbf A_j$'s and the definition of $S_\kappa$, we know that $\mathbf A_\infty\subseteq S_\kappa^\textrm{c}\cap(0,1)$. Also we have $$\Delta_j(\mathcal A)\sim\frac{l_{j+1}^2}{l_j^{\kappa+\epsilon+1}}\cdot\frac1{l_{j+1}^{\kappa+\epsilon+1}}\text{ and }d_j(\mathcal A)\sim\frac1{l_j^{\kappa+\epsilon+1}}.$$ Therefore by Theorem \ref{athm21}, we have
\begin{eqnarray*}
&{}&\dim_HS_\kappa^\textrm{c}\cap(0,1)\\
&\geq&\dim_H\mathbf A_\infty\\
&\geq&1-\limsup_{j\to\infty}\frac{-\sum_{i=1}^j\log(l_{i+1}^2/{(l_il_{i+1})^{\kappa+\epsilon+1}})}{\log l_{j+1}^{\kappa+\epsilon+1}}\\
&=&1-\limsup_{j\to\infty}\frac{(\kappa+\epsilon+1)\log l_1+\sum_{i=2}^j2(\kappa+\epsilon)\log l_i+(\kappa+\epsilon-1)\log l_{j+1}}{(\kappa+\epsilon+1)\log l_{j+1}}\\
&=&1-\frac{\kappa+\epsilon-1}{\kappa+\epsilon+1}=\frac2{\kappa+\epsilon+1}.
\end{eqnarray*}
Since this is true for any $\epsilon>0$, we obtain that $$\dim_HS_\kappa^\textrm{c}\cap(0,1)\geq\frac2{\kappa+1}.$$ This completes the proof of Theorem~\ref{athm}.
\end{proof}

\section{Effective Equidistribution of Abelian Horospherical Orbits in Finite-Volume Homogeneous Spaces}

\subsection{Introduction}
In this part, we will consider the effective equidistribution of horospherical orbits in homogeneous spaces. This topic has been studied well, and the present work is motivated by \cite{S} and \cite{V}. To be precise, let $\{a_t\}=\{\exp(tX)\}_{t\in\mathbb R}$ be a one parameter subgroup consisting of semisimple elements in a semisimple Lie group $G$, $\Gamma$ a lattice in $G$ and $\mu$ the Haar measure on $\Gamma\backslash G$. Let $\text{Ad}(g)$ be the adjoint action of $G$ on $\text{Lie}(G)$ induced by the action of conjugation $x\mapsto gxg^{-1}$. Let $U$ be the horospherical subgroup of $\{a_t\}$, i.e. $$U=\{g\in G| a_{-t}ga_t\to e\}.$$ The decomposition of $\text{Lie}(U)$ with respect to $\{a_t\}$ under the adjoint action is $$\text{Lie}(U)=\mathfrak g_{\alpha_1}\oplus\mathfrak g_{\alpha_2}\oplus\dots\oplus\mathfrak g_{\alpha_n}$$ where $\alpha_i$ are the roots of $\{a_t\}$, that is, $$\text{Ad}(a_t) x_i=\alpha_i(a_t)x_i$$ for any $x_i\in\mathfrak g_{\alpha_i}$. Without loss of generality, we can assume that each $\mathfrak g_{\alpha_i}$ is one-dimensional and some of these $\alpha_i$'s may be identical. We denote the exponential map from $\text{Lie}(G)$ to $G$ by $\exp$. For each $i$, fix $v_i\in\mathfrak g_{\alpha_i}$ with norm $1$ and let $B(T_1,T_2,\dots, T_n)$ be the parametrized box in $U$, i.e. $$B(T_1,T_2,\dots, T_n)=\{\exp(t_1v_1+t_2v_2+\dots+t_nv_n)|0\leq t_i\leq T_i(1\leq i\leq n)\}.$$ For any $t>0$ we define $$e^{\alpha t}:=\alpha_1(a_t)\alpha_2(a_t)\cdots \alpha_n(a_t)$$ for some $\alpha>0$ and then
 $$t^\alpha=\alpha_1(a_{\ln t})\alpha_2(a_{\ln t})\cdots \alpha_n(a_{\ln t}).$$ Also define
\begin{eqnarray*}
B(t):&=&B(\alpha_1(a_{\ln t}),\alpha_2(a_{\ln t}),\cdots,\alpha_n(a_{\ln t}))\\
&=&a_{\ln t}B(1,1,\dots,1)a_{-\ln t}.
\end{eqnarray*}
We will denote by $B_r$ the open ball of radius $r>0$ around $e$ in $G$.

\begin{definition}
For any $x\in\Gamma\backslash G$, we define the injectivity radius at $x$ by the largest number $\eta>0$ with the property that the map $$B_\eta\to xB_\eta\subset\Gamma\backslash G$$ by sending $g\in B_\eta$ to $xg\in\Gamma\backslash G$ is injective. We will denote the injectivity radius at $x$ by $\eta(x)$.
\end{definition}

We will denote by $A\ll B$ if there exists a constant $C>0$ such that $A\leq CB$. We will specify the constant $C>0$ in the contexts. If $A\ll B$ and $B\ll A$, we write $A\sim B$. We will follow the proof of Lemma 9.5 in \cite{V} and prove the following theorem

\begin{theorem}\label{bequidistribution}
Suppose that $U$ is abelian. There exist constants $a,b>0$ such that for any $f\in C^\infty(\Gamma\backslash G)$, we have $$\left|\frac1{T^\alpha}\int_{B(T)}f(xu)du-\int_{\Gamma\backslash G} fd\mu\right|\ll\frac1{T^a\eta^b}\|f\|_{\infty,l}.$$ Here $\eta=\eta(a_{\ln T}x)$ is the injectivity radius at $a_{\ln T}x$ and $\|\cdot\|_{\infty,l}$ is the $L^\infty$-Sobolev norm involving Lie derivatives of orders up to $l$ for some $l>0$. The implicit constant depends only on $\Gamma\backslash G$.
\end{theorem}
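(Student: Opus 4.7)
The plan is to follow Venkatesh's approach (Lemma 9.5 of \cite{V}): convert the horospherical average into an integral against a smooth bump on $\Gamma\backslash G$, to which the effective mixing of $\{a_t\}$ can be applied. The abelianness of $U$ is used to keep the change-of-variables calculations clean.

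First I rescale. Using $B(T) = a_{\ln T} B(1,\ldots,1) a_{-\ln T}$ and the substitution $u = a_{\ln T} v a_{-\ln T}$, whose Jacobian realizes the factor $T^{\alpha}$ coming from $\mathrm{Ad}(a_{\ln T})$ on $\mathrm{Lie}(U)$, one obtains
$$\frac{1}{T^\alpha}\int_{B(T)} f(xu)\,du = \int_{B(1,\ldots,1)} f\bigl(y v a_{-\ln T}\bigr)\,dv,\qquad y := x a_{\ln T}.$$
It therefore suffices to compare the unit-length horospherical average at $y$, shifted backward by $a_{-\ln T}$, with $\int f\,d\mu$, and $\eta=\eta(y)$ is now the injectivity radius at the initial point of this rescaled orbit.

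Next I thicken transversely on the right of $a_{-\ln T}$. Let $W\subset\mathfrak{g}$ be a subspace complementary to $\mathrm{Lie}(U)$, for instance $W = \mathrm{Lie}(U^-) \oplus \mathfrak{z}(X)$, where $U^- = \{g : a_{-t} g a_t \to e\text{ as }t\to-\infty\}$ is the contracting horospherical and $\mathfrak{z}(X)$ is the centralizer of $X$. Fix a smooth unit-mass bump $\psi$ on $W$ supported in the $\eta$-ball about $0$ with $\eta \le \eta(y)$, and set
$$I_\eta := \int_W \psi(\xi) \int_{B(1,\ldots,1)} f\bigl(y v a_{-\ln T}\exp(\xi)\bigr)\,dv\,d\xi.$$
Since $\exp(\xi)$ is inserted on the right of $a_{-\ln T}$, the perturbation is merely a right-translation of bounded size $O(\eta)$, so $|I_\eta - \int_{B(1,\ldots,1)} f(yva_{-\ln T})\,dv| \ll \eta\,\|f\|_{\infty,1}$. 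Commuting $a_{-\ln T}$ past $\exp(\xi)$ via $a_{-\ln T}\exp(\xi) = \exp(\mathrm{Ad}(a_{-\ln T})\xi)\,a_{-\ln T}$ and rescaling $\xi' = \mathrm{Ad}(a_{-\ln T})\xi$ turns $I_\eta$ into an integral over $(v,\xi')\in B(1,\ldots,1)\times \mathrm{Ad}(a_{-\ln T})W$ against a unit-mass bump $\tilde\psi(\xi')$. Since $W$ is transverse to $\mathrm{Lie}(U)$, the map $(v,\xi')\mapsto v\exp(\xi')$ is a local diffeomorphism near the identity, so after absorbing Jacobians one can rewrite
$$I_\eta = \int_G \Phi(g)\,f\bigl(y g a_{-\ln T}\bigr)\,dg,$$
for a smooth function $\Phi$ on $G$ with $\int \Phi = 1$ and $\|\Phi\|_{\infty,l}\ll \eta^{-b_0}$ for some $b_0 > 0$ depending only on $\dim G$ and $l$. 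Because $\eta \le \eta(y)$, $\Phi$ lifts under left multiplication by $y$ to a smooth bump $\tilde\Phi_y$ on $\Gamma\backslash G$ supported near $y$ with the same mass and Sobolev bound.

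Finally $I_\eta = \int_{\Gamma\backslash G}\tilde\Phi_y(x') f(x'a_{-\ln T})\,dx'$, and the Kleinbock--Margulis effective mixing of $\{a_t\}$ (with exponential rate $\kappa>0$) gives
$$\Bigl|I_\eta - \int f\,d\mu\Bigr| \ll T^{-\kappa}\eta^{-b_0}\|f\|_{\infty,l}.$$
Combined with the thickening error, the left-hand side of the theorem is $\ll (\eta + T^{-\kappa}\eta^{-b_0})\|f\|_{\infty,l}$; taking $\eta = \eta(y)$ and using $\eta \le 1$ to absorb the first term yields the bound $T^{-a}\eta^{-b}\|f\|_{\infty,l}$ with $a = \kappa/(b_0+1)$ and $b = b_0$. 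The hard part is the thickening step: one must insist on placing the perturbation on the \emph{right} of $a_{-\ln T}$, because a naive left-thickening (inserting $\exp(\xi)$ between $y$ and $v$) creates a perturbation whose $\mathrm{Lie}(U)$-component gets amplified by $\mathrm{Ad}(a_{\ln T})$ and blows up with $T$. The abelian hypothesis on $U$ then makes the rescaling of $\psi$ to $\tilde\psi$ and the non-abelian change of variables $(v,\xi')\mapsto v\exp(\xi')$ clean, so that the resulting $\Phi$ has Sobolev norm polynomially bounded in $\eta^{-1}$ without spurious $T$-factors; a secondary technicality is that when $\eta(y)$ is smaller than the $U$-extent of $B(1,\ldots,1)$, one first partitions $B(1,\ldots,1)$ into sub-boxes of side $\eta$ and runs the argument on each piece, and the resulting error terms telescope harmlessly.
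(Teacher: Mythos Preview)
Your thickening step has the right idea but the placement is off, and this breaks the argument. You insert $\exp(\xi)$ with $\xi\in W=\mathrm{Lie}(U^-)\oplus\mathfrak z(X)$ to the right of $a_{-\ln T}$ and then commute: $a_{-\ln T}\exp(\xi)=\exp(\xi')\,a_{-\ln T}$ with $\xi'=\mathrm{Ad}(a_{-\ln T})\xi$. But $U^-$ is by your own definition contracted by $\{a_t\}_{t>0}$, hence \emph{expanded} by $a_{-\ln T}$; the $\mathrm{Lie}(U^-)$-component of $\xi'$ therefore has size of order $\eta\,T^{c}$ for some $c>0$. Your rescaled bump $\tilde\psi$, and hence $\Phi$, then has support whose diameter grows polynomially in $T$: the map $(v,\xi')\mapsto v\exp(\xi')$ is no longer evaluated near the identity, and more fatally $\Phi$ cannot be lifted to a function $\tilde\Phi_y$ supported inside the $\eta(y)$-injectivity ball. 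The appeal to Kleinbock--Margulis mixing then has nothing to pair $f$ against. A related symptom is that your thickening error is $O(\eta)$ with $\eta=\eta(y)$; this term carries no decay in $T$, so the final claimed bound $T^{-a}\eta^{-b}$ does not follow from $\eta+T^{-\kappa}\eta^{-b_0}$ when $\eta(y)$ is bounded away from zero.

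The paper's fix (following Venkatesh) is to thicken \emph{between} the $U$-variable and $a_{-\ln T}$: after first partitioning $B(1)$ into sub-boxes $B(\gamma,\dots,\gamma)$ with $\gamma<\eta(y)/2$ (the step you relegate to a ``secondary technicality''), one inserts $\exp(z)\exp(w)$ with $z$ central of size $\delta$ and $w\in\mathrm{Lie}(U^-)$ of size $\gamma$, writing $f(y\exp(u)\exp(z)\exp(w)a_{-\ln T})$. The insertion error is estimated by conjugating the perturbation to the far right, $a_{\ln T}\exp(z)\exp(w)a_{-\ln T}=\exp(z)\exp(\mathrm{Ad}(a_{\ln T})w)$, and now $\mathrm{Ad}(a_{\ln T})$ \emph{contracts} $w$, giving an error $O(\max\{\delta,\gamma T^{-q}\})$. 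Meanwhile the full bump in $(u,z,w)$ already sits on the left of $a_{-\ln T}$ with all sides at most $\gamma$, so it lifts to $\Gamma\backslash G$ with Sobolev norm $\ll\delta^{-p}$ and no $T$-dependence in the support. Optimizing the free parameter $\delta$ against the mixing rate then yields the $T^{-a}\gamma^{-b}$ bound; summing over the $\gamma^{-n}$ sub-boxes produces the extra power of $\eta$ in the exponent.
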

\begin{remark}
We assume that $U$ is abelian so that the proof would be simple.  We expect that the theorem would still hold if $U$ is not abelian but the calculations would become complicated. 
\end{remark}
\begin{remark}
We will always assume that $\|f\|_{\infty,l}$ is defined and finite, and $l$ is large enough so that all the theorems and arguments in this note would hold. Readers may refer to \cite{KM1} for more details about the Sobolev norm. 
\end{remark}
\begin{remark}
Theorem \ref{bequidistribution} is weaker than the theorem proved by Str\"ombergsson \cite{S} in the case of $\Gamma\backslash\SL(2,\mathbb R)$ since the exponents $a$ and $b$ here can not be optimized. But the proof would involve only mixing property of a semisimple flow and give a result for a general homogeneous space. Readers may compare $T\eta^{\frac ba}$ and the $r$-factor in the main theorem of \cite{S}.
\end{remark}

Using the same arguments as in the proof of Theorem \ref{bequidistribution}, we can prove the following
\begin{theorem}\label{bequidistribution1}
Suppose that $U$ is abelian. Let $h(u)$ be a compactly supported smooth function on $U$. Then there exist constants $a,b>0$ such that for any $f\in C^\infty(\Gamma\backslash G)$ we have $$\left|\frac1{T^\alpha}\int_Uf(xu)h(a_{-\ln T}ua_{\ln T})du-\int_{\Gamma\backslash G} fd\mu\int_Uh(u)du\right|\ll\frac1{T^a\eta^b}\|f\|_{\infty,l}.$$ Here $\eta$ and $\|\cdot\|_{\infty,l}$ are the same as in Theorem \ref{bequidistribution}. The implicit constant depends only on $h(u)$ and $\Gamma\backslash G$.
\end{theorem}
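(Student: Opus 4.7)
The plan is to adapt the proof of Theorem~\ref{bequidistribution} (thickening together with exponential mixing of the semisimple flow $\{a_t\}$), replacing the indicator function of $B(1)$ by the smooth weight $h$. First, substitute $u=a_{\ln T}\,v\,a_{-\ln T}$ in the outer integral. The Jacobian of this change of variables on $U$ is $\det(\operatorname{Ad}(a_{\ln T})|_{\operatorname{Lie}(U)})=T^\alpha$, so $du=T^\alpha\,dv$ and $xu=xa_{\ln T}\,v\,a_{-\ln T}$. Writing $y:=xa_{\ln T}$ and $T':=\ln T$, it suffices to prove
$$\biggl|\int_U f(yva_{-T'})h(v)\,dv-\int_{\Gamma\backslash G}f\,d\mu\int_U h\,dv\biggr|\ll T^{-a}\eta^{-b}\|f\|_{\infty,l}.$$

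Fix $0<\delta<\eta/2$ and choose a non-negative $\chi_\delta\in C_c^\infty(\Gamma\backslash G)$ supported in the $\delta$-ball at $y$ with $\int\chi_\delta\,d\mu=1$ and $\|\chi_\delta\|_{\infty,k}\ll\delta^{-\dim G-k}$. Replacing the point evaluation at $y$ by integration against $\chi_\delta$, Fubini plus the measure-preserving substitution $z=y'v$ yield
$$I_\delta:=\int\chi_\delta(y')\int_U f(y'va_{-T'})h(v)\,dv\,d\mu(y')=\int f(za_{-T'})\Phi_h(z)\,d\mu(z),$$
where $\Phi_h(z):=\int_U\chi_\delta(zv^{-1})h(v)\,dv$ satisfies $\int\Phi_h\,d\mu=\int h\,dv$. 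Exponential mixing for the semisimple flow $\{a_t\}$ (analogue of Theorem~\ref{thm:KM}, with rate $\kappa>0$ controlled by the spectral gap) then gives
$$\Bigl|I_\delta-\int f\,d\mu\int h\,dv\Bigr|\ll T^{-\kappa}\|f\|_{\infty,l}\|\Phi_h\|_{\infty,l}.$$
A direct dimensional count, in which $\operatorname{Lie}(U)$-directional derivatives on $\chi_\delta$ are moved onto $h$ via the identity $\int(X\chi_\delta)(zv^{-1})h(v)\,dv=\int\chi_\delta(zv^{-1})(Xh)(v)\,dv$ for $X\in\operatorname{Lie}(U)$ (valid because $U$ is abelian and unimodular), gives $\|\Phi_h\|_{\infty,l}\ll\delta^{-M}\|h\|_{\infty,l}$ with $M:=\dim G-\dim U+l$.

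It remains to bound the smoothing error $|\int f(yva_{-T'})h(v)\,dv-I_\delta|$. The crucial tool --- this is where the abelianity of $U$ enters --- is the shift identity $\int_U f(yuva_{-T'})h(v)\,dv=\int_U f(yv'a_{-T'})h(u^{-1}v')\,dv'$ (by $v'=uv$), which converts $U$-directional thickening of $\chi_\delta$ into $U$-directional thickening of $h$ and thus contributes only $\delta\|h\|_{\infty,1}$, rather than incurring the expansion factor $T^\alpha$ coming from $\operatorname{Ad}(a_{-T'})|_U$; thickening in directions transverse to $U$ does cost $\delta T^\alpha\|f\|_{\infty,1}$. Taking $\chi_\delta$ of product form across $U$ and a complementary subgroup, the total smoothing error is $\ll\delta T^\alpha\|f\|_{\infty,1}\|h\|_{\infty,l}$. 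Combining with the mixing bound yields total error $\ll(\delta T^\alpha+T^{-\kappa}\delta^{-M})\|f\|_{\infty,l}$ (times constants depending on $h$ and $\Gamma\backslash G$), and optimising over $\delta\in(0,\eta/2]$ --- using the interior optimum $\delta=T^{-(\alpha+\kappa)/(M+1)}$ when that falls below $\eta/2$, and $\delta=\eta/2$ otherwise --- produces a bound of the advertised form $T^{-a}\eta^{-b}\|f\|_{\infty,l}$ with some $a,b>0$. The main obstacle is this last balancing step: one must take $l$ large enough that the effective mixing rate $\kappa$ in the Sobolev statement dominates $\alpha M$, and the abelianity of $U$ is precisely what keeps the smoothing error from being worse than $\delta T^\alpha$.
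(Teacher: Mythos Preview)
Your reduction to the integral $\int_U f(yva_{-T'})h(v)\,dv$ with $y=xa_{\ln T}$ is correct, and the shift identity for the $U$-directional part of the thickening is exactly the right idea. The gap is in the transverse smoothing error and the final balancing step.

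When you perturb $y$ by $g\in U^+$ with $\|g\|\le\delta$, commuting $g$ past $v\in\operatorname{supp}h$ produces $v^{-1}gv$, which has a $\operatorname{Lie}(U)$-component of size $O(\delta)$ (since $[\operatorname{Lie}(U^+),\operatorname{Lie}(U)]\not\subset\operatorname{Lie}(U^+)$ in general); this component is then expanded by $\operatorname{Ad}(a_{T'})$, giving the $\delta T^{\alpha}$ error you wrote. So far so good. The problem is the claim that one can ``take $l$ large enough that the effective mixing rate $\kappa$ dominates $\alpha M$''. In the Kleinbock--Margulis mixing statement (Theorem~\ref{bthm21}) the exponent $\kappa$ is determined by the spectral gap of $\Gamma\backslash G$ and does \emph{not} improve as $l$ grows, whereas your $M=\dim G-\dim U+l$ strictly increases with $l$. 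Already for $G=\SL(2,\mathbb R)$ one has $\alpha_{\max}=1$, $M\ge 3$, and $\kappa\le 1$ even for tempered spectrum, so $\kappa>\alpha M$ is never satisfied. Your optimisation therefore yields an error $T^{(\alpha M-\kappa)/(M+1)}$, which is a \emph{positive} power of $T$, and the argument does not close.

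The paper sidesteps this by never placing the $U^+$-thickening before the $U$-integration variable. It first partitions $\operatorname{supp}h$ into boxes $B_j=y_jB(\gamma,\dots,\gamma)$ of side $\gamma\sim\eta$ and, on each box, invokes Lemma~\ref{bl34}: there the $Z\times U^+$-thickening $\exp(z)\exp(v)$ is inserted \emph{after} the $U$-variable $u$ and \emph{immediately before} $a_{-t}$, so that $\exp(v)a_{-t}=a_{-t}\bigl(a_t\exp(v)a_{-t}\bigr)$ is \emph{contracted} and the smoothing error is only $O(\max\{\delta,\gamma e^{-qt}\})$, with no $T^{\alpha}$ factor. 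This is what makes the balancing against $e^{-\kappa t}\delta^{-p}$ succeed for arbitrary $\kappa>0$. If you want to salvage the single-thickening approach, you must likewise arrange that the $U^+$-component of your bump sits to the right of $v$ (e.g.\ by thickening with a function of the form $\chi_\delta(y'v^{-1})$ rather than $\chi_\delta(y')$, which essentially reproduces the paper's box-by-box scheme).
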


\begin{definition}
A point $p\in\Gamma\backslash G$ is called Diophantine of type $\mu$ with respect to $\{a_t\}$ if there exists a constant $C>0$ such that $$\eta(pa_t)\geq Ce^{-\mu t}$$ for all $t>0$. Also we say that an orbit $\{pa_t\}_{t\geq0}$ in $\Gamma\backslash G$ is non-divergent of order $\mu$ if there exists a constant $C>0$ such that $$\eta(pa_{t_k})\geq Ce^{-\mu t_k}$$ for infinitely many $t_k\to\infty$.
\end{definition}
\begin{remark}
Note that $\{pa_t\}_{t\geq0}$ is non-divergent of order $0$ with respect to $\{a_t\}$ if and only if $\{pa_t\}$ is non-divergent.
\end{remark}
The following is an immediate corollary of Theorem \ref{bequidistribution} and \cite{Sh}.
\begin{corollary}
Assume the conditions in Theorem \ref{bequidistribution}. If $x$ is Diophantine of type $\mu<a/b$ with respect to $\{a_t\}$, or $\{xa_t\}_{t\geq0}$ is non-divergent of order $\mu<a/b$, then $$\frac1{T^\alpha}\int_{B(T)}f(xu)du\to\int fd\mu.$$ 
Here the constants $a,b$ and the function $f$ are as in Theorem \ref{bequidistribution}.
\end{corollary}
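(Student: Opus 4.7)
The plan is to derive the corollary as a direct consequence of Theorem \ref{bequidistribution}, using the Diophantine or non-divergence hypothesis only to estimate the injectivity radius $\eta = \eta(a_{\ln T}x)$ that appears in the error term. Since the theorem gives a bound of the form $T^{-a}\eta^{-b}$ (times a Sobolev norm of $f$), the whole issue is to lower-bound $\eta$ by a negative power of $T$ with a small enough exponent.

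In the Diophantine case this is immediate. By definition, $\eta(xa_t)\geq Ce^{-\mu t}$ for every $t>0$, so setting $t=\ln T$ gives $\eta \geq C T^{-\mu}$. Plugging this into Theorem \ref{bequidistribution} yields
\[
\left|\frac{1}{T^\alpha}\int_{B(T)}f(xu)\,du-\int_{\Gamma\backslash G}f\,d\mu\right|\ll \frac{1}{T^{a-b\mu}}\|f\|_{\infty,l},
\]
and since $\mu<a/b$ the exponent $a-b\mu$ is strictly positive, so the right-hand side tends to $0$ as $T\to\infty$. The constant absorbed into $\ll$ depends only on $\Gamma\backslash G$ and on the Diophantine constant $C$.

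In the non-divergence case the situation is subtler: we only know $\eta(xa_{t_k})\geq Ce^{-\mu t_k}$ along some sequence $t_k\to\infty$. Setting $T_k=e^{t_k}$, the identical computation shows that $\frac{1}{T_k^\alpha}\int_{B(T_k)}f(xu)\,du\to\int f\,d\mu$ \emph{along the subsequence} $\{T_k\}$. To upgrade this to a limit along all $T\to\infty$ I would invoke the qualitative technology of Shah \cite{Sh}: non-divergence of $\{xa_t\}_{t\geq 0}$ prevents mass escape to the cusp, and Ratner's measure classification for the abelian horospherical subgroup $U$ forces every weak-$\ast$ limit of the averaging measures $\mu_T:=\frac{1}{T^\alpha}\int_{B(T)}\delta_{xu}\,du$ to be invariant under $U$ and hence, by unique ergodicity of $U$ on $\Gamma\backslash G$ (when $\Gamma$ is a lattice and $U$ is the full horospherical subgroup), equal to $\mu$. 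The subsequence information along $\{T_k\}$ then identifies the unique limit as $\mu$ for every divergent sequence, giving full convergence. The main obstacle in this second case is precisely this step of passing from quantitative equidistribution along a subsequence to qualitative equidistribution along all $T$: one has to verify that the standard non-divergence estimates apply to the $U$-averages parametrized by the boxes $B(T)$ (rather than the more familiar $\{a_{\ln T}\}$-dilates of a fixed box), which is where the abelianness of $U$ and the dilation identity $B(T)=a_{\ln T}B(1,\dots,1)a_{-\ln T}$ make the argument go through without modification.
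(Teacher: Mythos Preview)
The paper does not give a detailed proof of this corollary; it simply records it as ``an immediate corollary of Theorem \ref{bequidistribution} and \cite{Sh}.'' Your proposal is in line with this: the Diophantine case is exactly the intended computation (plug $\eta\geq CT^{-\mu}$ into the error bound and use $\mu<a/b$), and for the non-divergence-of-order-$\mu$ case you correctly observe that Theorem \ref{bequidistribution} only yields convergence along the subsequence $T_k=e^{t_k}$ and that one must appeal to \cite{Sh} to pass to all $T$.

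One point to sharpen in the second case: your appeal to ``unique ergodicity of $U$ on $\Gamma\backslash G$'' is not literally correct in the non-uniform setting, since there are closed $U$-orbits carrying their own invariant probability measures. The cleaner way to phrase the argument is this. Convergence of $\mu_{T_k}$ to the Haar measure already forces the orbit $xU$ to be dense in $\Gamma\backslash G$; in particular $xU$ is not contained in any proper closed homogeneous subspace. Then the equidistribution results of \cite{Sh} (built on Ratner's theorems and the linearization/non-divergence machinery) say that for such $x$ the full family $\mu_T$ converges, necessarily to the Haar measure identified along the subsequence. This is presumably what the paper means by ``and \cite{Sh}.'' Your sketch essentially captures this, but you should replace the unique-ergodicity claim by the statement that \cite{Sh} provides \emph{existence} of $\lim_T\mu_T$ once the orbit is known to be generic, after which the subsequence pins down the limit.
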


\noindent\textbf{Acknowledge.} I would like to thank Professor Andreas Str\"ombergsson and Samuel Edwards  for many discussions. I was told that they had results about the effective equidistribution of horocycle orbits in homogeneous spaces using number theoretic tools. Here what we prove in Theorem \ref{bequidistribution} is much weaker than \cite{S} and our purpose is just to show how to use mixing property only to get a version of such result.

\subsection{Preliminaries}
In the proof of Theorem \ref{bequidistribution}, we will need the following exponential mixing property. 

\begin{theorem}[Kleinbock and Margulis \cite{KM1}]\label{bthm21}
There exists $\kappa>0$ such that for any $f,g\in C^\infty (\Gamma\backslash G)$, we have $$\left|(a_t\cdot f,g)-\int_{\Gamma\backslash G}f\int_{\Gamma\backslash G}g\right|\ll e^{-\kappa t}\|f\|_{\infty,l}\|g\|_{\infty,l}.$$ Here $(a_t\cdot f)(x)=f(xa_{-t})$ is the right translation of $f$ by $a_t$ and $\|\cdot\|_{\infty,l}$ is the same Sobolev norm as in Theorem \ref{bequidistribution}.
\end{theorem}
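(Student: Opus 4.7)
The plan is to prove the mixing estimate by reducing $(a_t\cdot f, g) - \int f\int g$ to a matrix coefficient on the mean-zero subspace of $L^2(\Gamma\backslash G)$ and then exploiting the spectral gap of $\Gamma$ in $G$. Writing $f = \langle f\rangle + f_0$ and $g = \langle g\rangle + g_0$ with $\langle f\rangle = \int f\,d\mu$, the constant components contribute exactly $\int f\int g$ to the inner product, so the problem reduces to bounding $\langle \pi(a_t) f_0, g_0\rangle$, where $\pi$ denotes the right regular representation of $G$ on $L^2_0(\Gamma\backslash G)$ and $f_0, g_0$ have vanishing integral.

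Next I would apply the direct integral decomposition $\pi \cong \int_Z^{\oplus}\pi_z\,d\nu(z)$ into irreducible unitary representations of $G$. The matrix coefficient becomes a fiberwise integral of $\langle \pi_z(a_t) v_z, w_z\rangle$, so it suffices to bound each fiber uniformly. The essential input is the existence of a spectral gap for $\Gamma$ in $G$, namely a uniform lower bound on how far each $\pi_z$ lies from the trivial representation. When every simple factor of $G$ has higher rank this follows from Kazhdan's property (T); for $\SL(2,\mathbb R)$-factors it is the positivity of the smallest Laplace eigenvalue $\lambda_1$ on $\Gamma\backslash \mathfrak H$, while in general one invokes the known spectral gap results for lattices in rank-one simple groups.

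Given the spectral gap, I would invoke the classical matrix-coefficient estimates of Harish-Chandra and Cowling-Haagerup-Howe: for $K$-finite vectors $v,w$ in any irreducible unitary representation that is uniformly away from the trivial one,
$$|\langle \pi_z(a_t) v, w\rangle| \ll e^{-\kappa_0 t}\,(\dim Kv)^{1/2}(\dim Kw)^{1/2}\|v\|\|w\|,$$
with $\kappa_0>0$ depending only on the size of the spectral gap. To pass from $K$-finite to smooth vectors, decompose $f_0 = \sum_\tau f_{0,\tau}$ and $g_0 = \sum_\tau g_{0,\tau}$ into $K$-isotypic components; applying the above bound to each pair $(f_{0,\tau}, g_{0,\tau'})$ and summing, the factors $(\dim K\cdot)^{1/2}$ are absorbed by any sufficiently high $L^2$-Sobolev norm built from elliptic operators on $K$. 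The passage from $L^2$- to $L^\infty$-Sobolev norms is immediate because $\Gamma\backslash G$ has finite volume, yielding
$$|\langle \pi(a_t)f_0, g_0\rangle| \ll e^{-\kappa t}\|f\|_{\infty,l}\|g\|_{\infty,l}$$
for $\kappa = \kappa_0$ and $l$ large enough (depending on $\dim K$).

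The main obstacle is the Sobolev bookkeeping: one must choose $l$ so that the $K$-multiplicities are summable uniformly across the direct integral, and one must verify that the decay rate $\kappa$ extracted from Harish-Chandra's estimates can be made independent of the particular irreducible summands (only depending on the global spectral gap). The spectral gap itself is input from deeper work (property (T), Selberg-type bounds, or the Burger-Sarnak framework), but the quantitative interpolation between decay on principal/complementary series and decay on discrete series, together with uniform control of the implicit constants in the Howe-Moore type estimates, is the delicate part.
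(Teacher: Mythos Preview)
The paper does not prove this statement at all: Theorem~\ref{bthm21} is quoted verbatim from Kleinbock--Margulis \cite{KM1} as a black box in the preliminaries of Appendix~B, with no argument supplied. So there is nothing to compare your proposal against in the paper itself.

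That said, your sketch is essentially the standard route taken in \cite{KM1} and the surrounding literature: reduce to $L^2_0$, use the direct integral decomposition, invoke a spectral gap to stay uniformly away from the trivial representation, apply Harish-Chandra/Cowling--Haagerup--Howe bounds on $K$-finite matrix coefficients, and then sum over $K$-types at the cost of a Sobolev norm. The obstacles you flag (uniformity of constants across the direct integral, summability of $K$-multiplicities, and the spectral gap as external input) are exactly the ones that need care, and your outline handles them in the expected way. For the purposes of this paper the theorem is simply cited, so your proposal goes well beyond what is required here.
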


\subsection{Somme lemmas}
In this section, we will use the same arguments in the proof of Lemma 9.5 in \cite{V} and prove some lemmas which will be used in the proof of Theorem \ref{bequidistribution} and Theorem \ref{bequidistribution1}. 

\begin{lemma}\label{bl31}
Let $x$ be any point in $\Gamma\backslash G$. Then for every point $y\in xB(1)$ we have $$\eta(y)\sim\eta(x).$$ Here the implicit constant depends only on $G$. Generally, if $y\in xB$ for some bounded subset $B\subset U$, then the same result holds with the implicit constant depending only on $B$ and $G$.
\end{lemma}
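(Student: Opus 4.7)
The plan is to unpack the definition of $\eta(\cdot)$ into a stabilizer-type condition in $G$, and then exploit that conjugation by an element of the compact set $\overline{B(1)}$ distorts the ambient metric on $G$ only by a uniformly bounded factor.

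First, I would fix a representative $x_0 \in G$ of $x$, so that $y = xg$ has representative $x_0 g$. Unwinding the definition, for $h_1, h_2 \in G$ one has $x h_1 = x h_2$ in $\Gamma\backslash G$ if and only if $h_2 h_1^{-1} \in x_0^{-1}\Gamma x_0$, and similarly $y h_1 = y h_2$ if and only if $h_2 h_1^{-1} \in (x_0 g)^{-1}\Gamma(x_0 g) = g^{-1}(x_0^{-1}\Gamma x_0)g$. Thus $\eta(x)$ is comparable, up to a fixed factor of $2$ coming from the difference between $B_r$ and $B_r B_r^{-1}$, to the distance from $e$ to the nearest nontrivial element of the discrete set $x_0^{-1}\Gamma x_0$, and $\eta(y)$ is comparable to the distance from $e$ to the nearest nontrivial element of $g^{-1}(x_0^{-1}\Gamma x_0)g$.

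Next, I would invoke compactness of $\overline{B(1)}$ together with smoothness of the conjugation map $G \times G \to G$, $(g,h) \mapsto g^{-1}hg$, to show that there is a constant $C > 0$, depending only on $B(1)$ and $G$, such that $\phi_g : h \mapsto g^{-1} h g$ is bi-Lipschitz with constants in $[1/C, C]$ on some fixed neighborhood of $e$, uniformly for $g \in \overline{B(1)}$. Applying $\phi_g$ to the discrete set $x_0^{-1}\Gamma x_0$ then shows that the smallest nontrivial element of $g^{-1}(x_0^{-1}\Gamma x_0)g$ has norm comparable, up to the constant $C$, to that of the smallest nontrivial element of $x_0^{-1}\Gamma x_0$. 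Combining with the previous step yields $\eta(y) \sim \eta(x)$ with implicit constants depending only on $B(1)$ and $G$. The general statement, with $B$ in place of $B(1)$, follows by the identical argument with $\overline{B}$ in place of $\overline{B(1)}$.

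The main obstacle, such as it is, is purely bookkeeping: one must check that the notion of ``injectivity radius'' phrased in terms of injectivity of $B_r \to xB_r$ matches the stabilizer formulation up to a harmless factor of $2$, and that the bi-Lipschitz constants for conjugation are indeed uniform over the compact set $\overline{B(1)}$. Both facts are elementary and no deeper input is required; there is in particular no arithmetic or dynamical ingredient here, only the smooth structure of $G$ and the discreteness of $\Gamma$.
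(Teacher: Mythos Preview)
Your proposal is correct and rests on the same idea as the paper's proof: conjugation by an element of the compact set $\overline{B(1)}$ (or $\overline{B}$) distorts balls around $e$ only by a uniformly bounded factor. The paper packages this slightly more directly---observing that injectivity of $g\mapsto xg$ on $B_{\eta(x)}$ immediately gives injectivity of $h\mapsto yh$ on $u^{-1}B_{\eta(x)}u\supset B_{c\eta(x)}$---whereas you first translate $\eta(\cdot)$ into ``distance from $e$ to the nearest nontrivial element of the conjugated lattice'' and then apply the bi-Lipschitz bound; the content is the same.
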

\begin{proof}
Let $y=xu$ for some $u\in B(1)$. By definition, we know that the map $$B_{\eta(x)}\to xB_{\eta(x)}$$ is injective. This implies that $$u^{-1}B_{\eta(x)}u\to yu^{-1}B_{\eta(x)}u$$ is injective and hence $$\eta(x)\ll\eta(y)$$ for some implicit constant depending only on $B(1)$. The proof of $\eta(y)\ll\eta(x)$ is similiar. Also the proof of the general case is the same. This completes the proof of the lemma.
\end{proof}

Now we fix a positive compactly supported smooth function $g(x)$ with integral one on $\mathbb R$, and for any $n\in\mathbb N$, $\gamma>0$ and $\delta>0$ define
\begin{eqnarray*}
&{}&g_{\delta,n,\gamma}(u)\\
&=&\frac1{\delta^n}\int_0^\gamma\int_0^\gamma\cdots\int_0^\gamma g\left(\frac{u_1-t_1}\delta\right) g\left(\frac{u_2-t_2}\delta\right)\dots g\left(\frac{u_n-t_n}\delta\right)dt_1dt_2\dots dt_n.
\end{eqnarray*}
The following lemma is an immediate consequence from calculations.

\begin{lemma}\label{bl32}
We have 
\begin{enumerate}
\item $\int_{\mathbb R^n}g_{\delta,n,\gamma}(u)du=\gamma^n$.
\item $g_{\delta,n,\gamma}(u)$ is supported around the box $[0,\gamma]\times[0,\gamma]\times\dots\times[0,\gamma]$.
\item $\int_{\mathbb R^n} |g_{\delta,n,\gamma}(u)-\chi_{[0,\gamma]^n}(u)|du\ll\delta(\gamma+\delta)^{n-1}$.
\end{enumerate}
\end{lemma}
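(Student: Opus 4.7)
The proof is essentially a bookkeeping exercise, and the plan is to exploit the fact that $g_{\delta,n,\gamma}$ factorizes as a tensor product and thereby reduce everything to a one-dimensional analysis. Specifically, by Fubini,
\[
g_{\delta,n,\gamma}(u_1,\dots,u_n) \;=\; \prod_{i=1}^n h_\delta(u_i), \qquad h_\delta(u) := \frac{1}{\delta}\int_0^\gamma g\!\left(\tfrac{u-t}{\delta}\right)dt.
\]
Writing $g_\delta(u)=\delta^{-1}g(u/\delta)$ (an approximate identity, since $\int g=1$ and $g\geq 0$), one recognizes $h_\delta=\chi_{[0,\gamma]}\ast g_\delta$. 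This factorization is the key reduction, after which all three parts become standard.

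For part (1), Fubini gives $\int_{\mathbb R^n}g_{\delta,n,\gamma}(u)\,du=\prod_{i=1}^n\int_{\mathbb R}h_\delta(u_i)\,du_i$, and for each factor another application of Fubini combined with $\int g_\delta=1$ yields $\int_{\mathbb R}h_\delta=\gamma$. Multiplying gives $\gamma^n$. For part (2), if $\mathrm{supp}(g)\subset[-R,R]$, then $g((u_i-t_i)/\delta)$ vanishes unless $u_i\in[t_i-R\delta,t_i+R\delta]$, so $\mathrm{supp}(h_\delta)\subset[-R\delta,\gamma+R\delta]$, and hence $\mathrm{supp}(g_{\delta,n,\gamma})\subset[-R\delta,\gamma+R\delta]^n$, which is the asserted neighborhood of $[0,\gamma]^n$.

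Part (3) is the substantive step. First I would establish the one-dimensional estimate
\[
\int_{\mathbb R}\bigl|h_\delta(u)-\chi_{[0,\gamma]}(u)\bigr|\,du \;\ll\; \delta.
\]
This is immediate from the observation that on $[R\delta,\gamma-R\delta]$ the function $h_\delta$ equals $1$ (because $g_\delta$ has total mass $1$ and is supported in $[-R\delta,R\delta]$), while outside $[-R\delta,\gamma+R\delta]$ both functions vanish; on the remaining set of measure $\leq 4R\delta$ the integrand is bounded by $1+\|h_\delta\|_\infty\leq 2$.

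Having this, I would telescope the product difference via the identity
\[
\prod_{i=1}^n h_\delta(u_i)-\prod_{i=1}^n\chi_{[0,\gamma]}(u_i) \;=\; \sum_{i=1}^n\Bigl(\prod_{j<i}h_\delta(u_j)\Bigr)\bigl(h_\delta(u_i)-\chi_{[0,\gamma]}(u_i)\bigr)\Bigl(\prod_{j>i}\chi_{[0,\gamma]}(u_j)\Bigr).
\]
Integrating and using $\|h_\delta\|_{L^1}=\gamma\leq\gamma+\delta$, $\|\chi_{[0,\gamma]}\|_{L^1}=\gamma\leq\gamma+\delta$, and the one-dimensional bound above for the middle factor, each term contributes $\ll\delta(\gamma+\delta)^{n-1}$, and summing the $n$ terms (with $n$ absorbed into the implicit constant, since $G$ and hence $n$ are fixed) gives the stated estimate. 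There is no real obstacle; the only minor care is to keep the bounds in the form $(\gamma+\delta)^{n-1}$ rather than $\gamma^{n-1}$, which matches exactly the use later when $\gamma$ may be comparable to or smaller than $\delta$.
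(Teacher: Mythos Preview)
Your proof is correct and is exactly the natural calculation; the paper itself gives no proof beyond the sentence ``The following lemma is an immediate consequence from calculations,'' so there is nothing to compare against. Your factorization $g_{\delta,n,\gamma}=\prod_i h_\delta(u_i)$ with $h_\delta=\chi_{[0,\gamma]}\ast g_\delta$ and the telescoping of the product difference is the standard way to carry out what the paper leaves implicit.
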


\begin{lemma}\label{bl33}
Let $y\in\Gamma\backslash G$ and $f\in C^\infty(\Gamma\backslash G)$. Assume that $\int_{\Gamma\backslash G} f d\mu=0$. Then there exist constants $a,b>0$ such that for any $t>0$ and $\gamma<\frac{\eta(y)}2$ we have
\begin{eqnarray*}
\left|\int_{B(\gamma,\gamma,\dots,\gamma)}f(yua_{-t})du\right|\ll\frac1{e^{at}\gamma^b}\|f\|_{\infty,l}.
\end{eqnarray*}
The implicit constant depends only on $\Gamma\backslash G$.
\end{lemma}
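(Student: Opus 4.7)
My plan is to follow the strategy of \cite[Lemma 9.5]{V}, adapted to our abelian horospherical subgroup $U$ of general rank. The idea is to convert the horospherical integral $\int_{B(\gamma,\dots,\gamma)}f(yua_{-t})du$ into a matrix coefficient for the $\{a_t\}$-action by thickening transverse to $U$, and then invoke exponential mixing (Theorem~\ref{bthm21}). The three ingredients are a smoothing on $U$, a thickening transverse to $U$, and a careful comparison step to extract the bound for the original (sharply cut off) integral from the thickened one.

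First I would smooth the box indicator $\chi_{B(\gamma,\dots,\gamma)}$ by $g_{\delta,n,\gamma}$, picking up an error of size $\delta(\gamma+\delta)^{n-1}\|f\|_{\infty,0}$ via Lemma~\ref{bl32}(3). Using local product coordinates $G\simeq (U^-A)\cdot U$ near the identity (where $U^-$ is the opposite horospherical for $\{a_t\}$), I would fix a smooth bump $\phi_\delta$ of width $\delta$ on $U^-A$ with unit integral, and assemble the $G$-bump $\Phi(vu):=\phi_\delta(v)g_{\delta,n,\gamma}(u)$. Provided $\delta+\gamma<\eta(y)$, the right-translate $g(x):=\Phi(y^{-1}x)$ descends to a well-defined smooth function on $\Gamma\backslash G$ whose Sobolev norm satisfies $\|g\|_{\infty,l}\ll\delta^{-l}$, since the derivatives of the width-$\delta$ mollifier pick up factors of $\delta^{-1}$.

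The key identity is then
$$\int_G\Phi(w)f(ywa_{-t})\,dw=\int_{\Gamma\backslash G}g(x)(a_t\cdot f)(x)\,d\mu(x).$$
Since $\int f\,d\mu=0$, Theorem~\ref{bthm21} yields
$$\left|\int_G\Phi(w)f(ywa_{-t})\,dw\right|\ll e^{-\kappa t}\delta^{-l}\|f\|_{\infty,l}.$$
It remains to compare this thickened integral with $\int_U g_{\delta,n,\gamma}(u)f(yua_{-t})du$. Factoring $yvua_{-t}=ya_{-t}\cdot\tilde v\cdot(a_tua_{-t})$ with $\tilde v:=a_tva_{-t}$ of size $O(\delta)$ (since $a_{-t}$ contracts $U^-$ and fixes $A$), and applying a Lipschitz estimate, I would bound the pointwise difference $f(yvua_{-t})-f(yua_{-t})$ in terms of $\|f\|_{\infty,1}$ and the displacement $\|(a_tua_{-t})^{-1}\tilde v(a_tua_{-t})-\tilde v\|$. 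Because $U$ is abelian, only a single commutator bracket enters, giving a displacement $\ll\delta(1+\gamma e^{\alpha t})^k$ for some $k>0$, and hence a total thickening error $\ll\delta(\gamma+\delta)^n(1+\gamma e^{\alpha t})^k\|f\|_{\infty,1}$.

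Combining the three error contributions and optimizing $\delta=e^{-\lambda t}$ with $\lambda>0$ small enough that the mixing term $e^{-\kappa t}\delta^{-l}$ dominates the $e^{k\alpha t}\delta$ growth from the comparison, one obtains the claimed bound $\ll e^{-at}\gamma^{-b}\|f\|_{\infty,l}$ for appropriate constants $a,b>0$. The main obstacle is the comparison step: controlling how the expanding conjugation $a_tua_{-t}\in U$ amplifies the small transverse element $\tilde v\in U^-A$. The abelian-ness of $U$ is essential here, as it ensures the amplification is only polynomial in $\gamma e^{\alpha t}$, which can then be absorbed by the exponential decay from mixing once $\lambda$ is tuned correctly.
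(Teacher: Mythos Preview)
Your overall strategy---smooth on $U$, thicken transversely, and invoke exponential mixing---is the same as the paper's (and Venkatesh's). The gap is in the comparison step, and it stems from the ordering you chose for the local product: you write $G\simeq(U^-A)\cdot U$ and place the transverse bump \emph{to the left} of $u$. This forces you to compare $f(yvua_{-t})$ with $f(yua_{-t})$, and as you observe the relevant displacement is $w^{-1}\tilde v\,w$ with $w=a_tua_{-t}$. But $w$ lies in the \emph{expanding} direction and has size $\sim\gamma e^{\alpha_{\max}t}$; conjugating the small element $\tilde v$ by $w$ produces a $\mathfrak u$-component of size $\gg\delta\gamma e^{\alpha_{\max}t}$ (already in $\SL(2,\mathbb R)$ a direct matrix computation gives an upper-right entry $\sim(2sd-s^2c)e^t$). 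Your optimization then needs $\lambda>k\alpha$ (to kill the comparison growth) simultaneously with $\lambda l<\kappa$ (to keep the mixing term $e^{-\kappa t}\delta^{-l}$ decaying), i.e.\ $k\alpha l<\kappa$. This fails whenever the spectral gap is small; in $\SL(2,\mathbb R)$ one has $\alpha=1$, $l\ge1$, $\kappa<1$, so the window is empty. The phrase ``absorbed by the exponential decay from mixing once $\lambda$ is tuned correctly'' hides an impossible constraint.

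The paper avoids this entirely by placing the transverse thickening on the \emph{right} of $u$: one compares $f(y\exp(u)\exp(z)\exp(v)a_{-t})$ with $f(y\exp(u)a_{-t})$, where $z\in\operatorname{Lie}(Z)$ and $v\in\operatorname{Lie}(U^+)$. The displacement is then simply $a_t\exp(z)\exp(v)a_{-t}$, of size $\max\{\delta,\gamma/e^{qt}\}$ since $Z$ commutes with $a_t$ and $U^+$ is \emph{contracted} by $a_t(\cdot)a_{-t}$; no conjugation by the large element $w$ ever enters. The comparison error therefore decays for any $\delta=\gamma e^{-\epsilon t}$, and one may take $\epsilon>0$ small enough that the mixing term $e^{-\kappa t}\delta^{-p}$ also decays, for \emph{any} $\kappa>0$. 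In short: swap the order of your product coordinates so that the transverse piece is applied after $u$, and the argument goes through.
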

\begin{proof}
Now let $U^+$ be the unstable horospherical subgroup of $\{a_t\}$ and $Z=Z(a_t)$ be the central subgroup of $\{a_t\}$ in $G$. Then we know that $$\text{Lie}(U)\oplus\text{Lie}(U^+)\oplus\text{Lie}(Z)=\text{Lie}(G).$$ Let $\dim U=\dim U^+=n$ and $\dim Z(a_t)=m$. By Lemma \ref{bl32} and the same arguments as in Lemma 9.5 of \cite{V} we have
\begin{eqnarray*}
&{}&\int_{B(\gamma,\gamma,\dots,\gamma)}f(yua_{-t})du\\
&=&\int_{\text{Lie}(U)}f(y\exp(u)a_{-t})g_{\delta,n,\gamma}(u)du+O(\|f\|_{\infty,l})\delta(\gamma+\delta)^{n-1}\\
&=&\frac1{\delta^m\gamma^n}\iiint_{\text{Lie}(U)\times\text{Lie}(Z)\times\text{Lie}(U^+)}f(y\exp(u)a_{-t})g_{\delta,n,\gamma}(u)g_{\delta,m,\delta}(z)g_{\delta,n,\gamma}(v)dudzdv\\
&{}&+O(\|f\|_{\infty,l})\delta(\gamma+\delta)^{n-1}\\
&=&\frac1{\delta^m\gamma^n}\iiint_{\text{Lie}(G)}f(y\exp(u)\exp(z)\exp(v)a_{-t})g_{\delta,n,\gamma}(u)g_{\delta,m,\delta}(z)g_{\delta,n,\gamma}(v)dudzdv\\
&{}&+O(\|f\|_{\infty,l})(\delta(\gamma+\delta)^{n-1}+\gamma^n\max\{\delta,\gamma/e^{q t}\})\\
&=&\frac1{\delta^m\gamma^n}\int_{\Gamma\backslash G} f(xa_{-t})g_{\delta,y}(x)d\mu(x)+O(\|f\|_{\infty,l})(\delta(\gamma+\delta)^{n-1}+\gamma^n\max\{\delta,\gamma/e^{q t}\})\\
&=&\frac1{\delta^m\gamma^n}(a_t\cdot f,g_{\delta,y})+O(\|f\|_{\infty,l})(\delta(\gamma+\delta)^{n-1}+\gamma^n\max\{\delta,\gamma/e^{q t}\})
\end{eqnarray*}
where $q>0$ is a positive constant and $g_{\delta,y}$ is a function supported on the ball of radius $\eta(y)$ at $y$ in $\Gamma\backslash G$. Note that all injectivity radii have a common upper bound depending only on $\Gamma\backslash G$. By the definition of Lie derivatives, we can compute $\|g_{\delta,y}\|_{\infty,l}$ and there exists a constant $p>0$ such that $$\|g_{\delta,y}\|_{\infty,l}\ll1/\delta^p.$$ Therefore, by exponential mixing of semisimple flow (Theorem \ref{bthm21}), we have
\begin{eqnarray*}
&{}&\left|\int_{B(\gamma,\gamma,\dots,\gamma)}f(yua_{-t})du\right|\\
&\ll&\frac1{\delta^m\gamma^n}\frac1{e^{\kappa t}\delta^p}\|f\|_{\infty,l}+\|f\|_{\infty,l}(\delta(\gamma+\delta)^{n-1}+\max\{\delta,\gamma/e^{q t}\}).
\end{eqnarray*}
Let $\delta=\gamma e^{-\epsilon t}<\gamma$ for some small $\epsilon>0$ and this completes the proof of the lemma.
\end{proof}

\begin{lemma}\label{bl34}
Assume the conditions in Lemma \ref{bl33}. Let $h(u)$ be a smooth compactly supported function on $U$. Then there exist constants $a,b>0$ such that for any $t>0$ and $\gamma<\frac{\eta(y)}2$ we have
\begin{eqnarray*}
\left|\int_{B(\gamma,\gamma,\dots,\gamma)}f(yua_{-t})h(u)du\right|\ll\frac1{e^{at}\gamma^b}\|f\|_{\infty,l}\|h\|_{\infty,l}.
\end{eqnarray*}
Here $\|h(u)\|_{\infty,l}$ is the $L^\infty$-Sobolev norm involving partial derivatives of orders up to $l$ on $U$. The implicit constant depends only on $\Gamma\backslash G$.
\end{lemma}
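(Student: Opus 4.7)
My plan is to follow the proof of Lemma \ref{bl33} essentially verbatim, carrying the smooth weight $h(u)$ alongside the mollifier $g_{\delta,n,\gamma}(u)$ at every step. Because $h$ is smooth and compactly supported on $U$, all of its $L^\infty$-Sobolev norms $\|h\|_{\infty,j}$ are finite, so multiplying by $h$ only introduces factors of $\|h\|_{\infty,l}$ at the two places where pointwise and Sobolev norms appear, without altering the structure of the argument.

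First I would replace $\chi_{B(\gamma,\gamma,\dots,\gamma)}(u)$ by $g_{\delta,n,\gamma}(u)$ as in Step~1 of Lemma \ref{bl33}; by Lemma \ref{bl32}(3) this incurs an error of size $\|f\|_{\infty,0}\|h\|_{\infty,0}\cdot\delta(\gamma+\delta)^{n-1}$. Next I would thicken in the central direction $\mathrm{Lie}(Z)$ and the unstable direction $\mathrm{Lie}(U^+)$ by inserting the factors $g_{\delta,m,\delta}(z)/\delta^m$ and $g_{\delta,n,\gamma}(v)/\gamma^n$ exactly as before. Since $h(u)$ does not depend on $(z,v)$, the resulting error uses only that $Z$ commutes with $a_t$ and that $\mathrm{Ad}(a_t)$ contracts $\mathrm{Lie}(U^+)$ at some exponential rate $e^{-qt}$, giving the same form $\|f\|_{\infty,1}\|h\|_{\infty,0}(\delta(\gamma+\delta)^{n-1}+\gamma^n\max\{\delta,\gamma/e^{qt}\})$ as in Lemma \ref{bl33}.

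Under the local diffeomorphism $(u,z,v)\mapsto\exp(u)\exp(z)\exp(v)$, and because $\gamma<\eta(y)/2$ ensures injectivity, the thickened integral descends to $\int_{\Gamma\backslash G} f(xa_{-t})G_{\delta,\gamma,y,h}(x)\,d\mu(x)=(a_t\cdot f,G_{\delta,\gamma,y,h})$, where $G_{\delta,\gamma,y,h}$ is a smooth function on $\Gamma\backslash G$ supported in a ball of radius $\eta(y)$ around $y$ whose defining expression contains the product $h(u)g_{\delta,n,\gamma}(u)g_{\delta,m,\delta}(z)g_{\delta,n,\gamma}(v)$. By the Leibniz rule, $\|G_{\delta,\gamma,y,h}\|_{\infty,l}\ll\|h\|_{\infty,l}/\delta^p$ for the same exponent $p$ appearing in Lemma \ref{bl33}; applying the exponential mixing estimate of Theorem \ref{bthm21} with $\int_{\Gamma\backslash G}f\,d\mu=0$ gives $|(a_t\cdot f,G_{\delta,\gamma,y,h})|\ll e^{-\kappa t}\|f\|_{\infty,l}\|h\|_{\infty,l}/\delta^p$.

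Assembling the three contributions and choosing $\delta=\gamma e^{-\epsilon t}$ with $\epsilon>0$ small, exactly as at the end of the proof of Lemma \ref{bl33}, yields the desired bound $\frac{1}{e^{at}\gamma^b}\|f\|_{\infty,l}\|h\|_{\infty,l}$ for appropriate $a,b>0$. The only real obstacle is keeping the $\|h\|_{\infty,l}$ factor clean in the Leibniz estimate for $\|G_{\delta,\gamma,y,h}\|_{\infty,l}$; however, since $h$ is independent of $z,v$ and its support (together with $\gamma$) is bounded, this is routine and introduces no new small parameter.
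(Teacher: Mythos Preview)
Your proposal is correct and follows essentially the same approach as the paper: replace the indicator by the mollifier $g_{\delta,n,\gamma}$ with error $O(\|f\|_{\infty,l}\|h\|_{\infty,l})\delta(\gamma+\delta)^{n-1}$, then run the proof of Lemma~\ref{bl33} verbatim, the only change being that the bump function $g_{\delta,y}$ now satisfies $\|g_{\delta,y}\|_{\infty,l}\ll \|h\|_{\infty,l}/\delta^p$ by the Leibniz rule. The paper's proof is in fact just a two-line sketch to this effect, so your write-up is more detailed but identical in content.
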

\begin{proof}
By Lemma \ref{bl32}, we have
\begin{eqnarray*}
&{}&\int_{B(\gamma,\gamma,\dots,\gamma)}f(yua_{-t})h(u)du\\
&=&\int_{\text{Lie}(U)}f(y\exp(u)a_{-t})h(\exp(u))g_{\delta,n,\gamma}(u)du+O(\|f\|_{\infty,l}\|h\|_{\infty,l})\delta(\gamma+\delta)^{n-1}.
\end{eqnarray*}
Now the lemma follows from the same arguments as in Lemma \ref{bl33}. (In this case, we have $\|g_{\delta,y}\|_{\infty,l}\ll1/\delta^p\|h\|_{\infty,l}$ for some $p>0$.)
\end{proof}

\subsection{Effective equidistribution of abelian horospherical orbits}
In this section, we will prove Theorem \ref{bequidistribution} and Theorem \ref{bequidistribution1}.
\begin{proof}[Proof of Theorem \ref{bequidistribution}]
Without loss of generality, assume that $\int fd\mu=0$. We know that 
$$\frac1{T^\alpha}\int_{B(T)}f(xu)du=\int_{B(1)}f(xa_{\ln T}ua_{-\ln T})du.$$
By Lemma \ref{bl31} and the assumption that $U$ is abelian, we can find $\gamma>0$ with the following properties
\begin{enumerate}
\item We can devide $B(1)$ into small boxes $\{B_j\}$. For each $j$, there exists $y_j\in B(1)$ such that $B_j=y_jB(\gamma,\gamma,\dots,\gamma)$.
\item For each $j$, we have $\gamma<\eta(xa_{\ln T}y_j)/2$.
\item $\gamma\sim\eta(x a_{\ln T})$ and the implicit constant in $\sim$ depends only on $\Gamma\backslash G$.
\end{enumerate}
In fact, we can take such $\gamma$ by first taking the infimum of $\{\eta(xa_{\ln T}y)/2|y\in B(1)\}$ and then modifying it so that $1/\gamma$ is an integer. Note that the number of these boxes $B_j$ is $1/\gamma^n$. Now by Lemma \ref{bl33} we have
 
\begin{eqnarray*}
&{}&\left|\frac1{T^\alpha}\int_{B(T)}f(xu)du\right|=\left|\int_{B(1)}f(xa_{\ln T}ua_{-\ln T})du\right|\\
&\leq&\sum_j\left|\int_{B_j}f(xa_{\ln T}ua_{-\ln T})du\right|\\
&=&\sum_j\left|\int_{B(\gamma,\dots,\gamma)}f((xa_{\ln T}y_j)ua_{-\ln T})du\right|\\
&\ll&\frac1{\gamma^n}\frac1{T^a\gamma^b}\|f\|_{\infty,l}\ll\frac1{T^a\eta(xa_{\ln T})^{b+n}}\|f\|_{\infty,l}.
\end{eqnarray*}
This completes the proof of Theorem \ref{bequidistribution}.
\end{proof}

\begin{proof}[Proof of Theorem \ref{bequidistribution1}]
The proof is similar to that of Theorem \ref{bequidistribution}. We assume that $\int fd\mu=0$. We have 
$$\frac1{T^\alpha}\int_{U}f(xu)h(a_{-\ln T}ua_{\ln T})du=\int_Uf(xa_{\ln T}ua_{-\ln T})h(u)du=\int_Bf(xa_{\ln T}ua_{-\ln T})h(u)du$$ for some box $B\subset U$ since $h(u)$ is compactly supported. Using the same arguments as in the proof of Theorem \ref{bequidistribution}, we can find $\gamma>0$ with the following properties
\begin{enumerate}
\item We can devide $B$ into small boxes $\{B_j\}$. For each $j$, there exists $y_j\in B$ such that $B_j=y_jB(\gamma,\gamma,\dots,\gamma)$.
\item For each $j$, we have $\gamma<\eta(xa_{\ln T}y_j)/2$.
\item $\gamma\sim\eta(x a_{\ln T})$ and the implicit constant in $\sim$ depends only on $B$ and $\Gamma\backslash G$.
\end{enumerate}
By Lemma \ref{bl34}, we obtain that 
\begin{eqnarray*}
&{}&\left|\frac1{T^\alpha}\int_{U}f(xu)h(a_{-\ln T}ua_{\ln T})du\right|=\left|\int_Bf(xa_{\ln T}ua_{-\ln T})h(u)du\right|\\
&\leq&\sum_j\left|\int_{B_j}f(xa_{\ln T}ua_{-\ln T})h(u)du\right|\\
&=&\sum_j\left|\int_{B(\gamma,\dots,\gamma)}f((xa_{\ln T}y_j)ua_{-\ln T})h(y_ju)du\right|\\
&\ll&\sum_j\frac1{T^a\gamma^b}\|f\|_{\infty,l}\|h\|_{\infty,l}\ll\frac{\text{Vol}(B)}{\gamma^n}\frac1{T^a\gamma^b}\|f\|_{\infty,l}\|h\|_{\infty,l}\\
&\ll&\frac1{T^a\eta(xa_{\ln T})^{b+n}}\|f\|_{\infty,l}.
\end{eqnarray*}
Here the implicit constant depends on $h(u)$. This completes the proof of Theorem \ref{bequidistribution1}.
\end{proof}

\end{appendices}

\end{document}